\setlist[enumerate,1]  {label={\rm (\roman*)}, leftmargin=1.5em}   
\setlist{noitemsep} 
\newenvironment{reserva}%
{\begin{mdframed}[backgroundcolor=lime]}
  {\end{mdframed}}
\newenvironment{reserva2}
{\begin{mdframed}[backgroundcolor=pink]}
  {\end{mdframed}}
\numberwithin{equation}{section}
\newtheorem{theorem}{Theorem}[section]
\newtheorem{proposition}[theorem]{Proposition}
\newtheorem{lemma}[theorem]{Lemma}
\newtheorem{corollary}[theorem]{Corollary}
\newtheorem{definition}[theorem]{Definition}
\newtheorem{remark}[theorem]{Remark}
\renewenvironment{proof}[1][Proof] {\noindent \textbf{#1.} }
{\  \rule{0.5em}{0.5em}\par \medskip}
\newcommand{\myeq}[1]{\ensuremath{\stackrel{\text{#1}}{=}}}
\newcommand{\myleq}[1]{\ensuremath{\stackrel{\text{#1}}{\leq}}}
\newcommand{\mygeq}[1]{\ensuremath{\stackrel{\text{#1}}{\geq}}}
\newcommand{\defeq}{\vcentcolon=}
\newcommand{\hole}{\mathscr{C}}
\newcommand\restr[2]{\ensuremath{#1_{|_{#2}}}}
\renewcommand{\abs}[1]{\left| #1 \right|}
\newcommand*\Lap{\mathop{}\!\mathbin\bigtriangleup}
\newcommand{\D}{\displaystyle}
\newcommand*\adh[1]{\overline{#1}} 
\newcommand{\RN}{{\mathbb{R}^N}}
\newcommand{\R}{\mathbb{R}}
\def\XXint#1#2#3{{\setbox0=\hbox{$#1{#2#3}{\int}$ }
		\vcenter{\hbox{$#2#3$ }}\kern-.580\wd0}}
\begin{document}

\title{On the loss of mass for the heat equation in an exterior domain
with general boundary conditions}

\author{ Joaquín Domínguez-de-Tena${}^{*,1}$ \\
An\'{\i}bal Rodr\'{\i}guez-Bernal\thanks{Partially supported
    by Projects PID2019-103860GB-I00 and  PID2022-137074NB-I00,  MICINN and   GR58/08
  Grupo 920894, UCM, Spain}\ ${}^{,2}$}

\date{\today}
\maketitle


\setcounter{footnote}{2}
\begin{center}
  Departamento de Análisis Matemático y  Matem\'atica Aplicada\\ Universidad
  Complutense de Madrid\\ 28040 Madrid, Spain \\ and \\
  Instituto de Ciencias Matem\'aticas \\
CSIC-UAM-UC3M-UCM\footnote{Partially supported by ICMAT Severo Ochoa
Grant CEX2019-000904-S funded by MCIN/AEI/ 10.13039/501100011033} , Spain 
\end{center}

\makeatletter
\begin{center}
${}^{1}${E-mail:
joadomin@ucm.es}
\\ 
${}^{2}${E-mail:
arober@ucm.es}
\end{center}
\makeatother

  \noindent {$\phantom{ai}$ {\bf Key words and phrases:}  Heat equation,
    exterior domain, mass loss, asymptotic mass, decay rates, Dirichlet, Neumann,
    Robin, boundary conditions.} 
  \newline{$\phantom{ai}$ {\bf Mathematical Subject Classification
      2020:} \
    35B30, 25B40, 35E15, 35K05}

\begin{abstract}
  In this work, we study the decay of mass for solutions to the heat
  equation in exterior domains, i.e., domains which are the complement
  of a compact set in $\mathbb{R}^N$. Different homogeneous boundary
  conditions are considered, including Dirichlet, Robin, and Neumann
  conditions. We determine the exact amount of mass loss and identify
  criteria for complete mass decay, in which 
  the dimension of the space plays a key role. Furthermore, the paper
  provides explicit mass decay rates. 
\end{abstract}

\section{Introduction}
One of the main  properties of the heat equation in the entire
space
\begin{displaymath}
  		u_t-\Lap u = 0, \qquad x\in \R^{N}, \quad t>0, 
\end{displaymath}
is that the mass of the solution, which is defined as 
\begin{equation*}
	m(t)\defeq \int_{\mathbb{R}^N} u(x,t) dx,
\end{equation*}
is conserved during the temporal evolution. This can be obtained,
formally, by integrating the equation in $\R^{N}$ and assuming the
decay at infinity of the solution or,  more rigorously, by using the
integral representation of the solution using the Gaussian heat
kernel.
Mass conservation  is consistent with
various probabilistic and physical interpretations of the equation and
reflects the phenomenon of diffusion of $u$ in $\R^{N}$. Of course,
for nonnegative solutions, this property implies the conservation of
the $L^{1}(\R^{N})$ norm of the solutions with time.

In bounded domains, the situation changes. For example, if we
consider the heat equation in a bounded domain
$\Omega\subset\mathbb{R}^N$ with homogeneous Dirichlet conditions on
$\partial\Omega$, 
\begin{displaymath}
	\left\{
	\begin{aligned}
		u_t-\Lap u = 0 \quad & in \ \Omega\times(0,\infty) \\
		u=0 \quad & on \ \partial\Omega\times[0,\infty) \\
		u=u_0 \quad & in \ \Omega\times\{0\} , 
	\end{aligned}	
	\right. 
\end{displaymath}
we find that the solutions decay to zero in the  $L^1(\Omega)$
and $L^\infty(\Omega)$ norms and, consequently, the mass decays to $0$
for all solutions. The same occurs if we impose Robin boundary
conditions of the form $\frac{\partial u}{\partial 
  n}+b u=0$ with $b >0$. On the other hand, if we impose homogeneous Neumann
conditions, i.e. $b=0$, the
mass is conserved during the evolution. The physical reason for this
is that mass is lost through the boundary in the case of the first two
boundary conditions while there is no flux through the boundary in the
latter one.  Mathematically, the explanation stems from the sign of
the first eigenvalue of the Laplacian, which is positive in the first
two cases and is zero in the latter.

However, if the domain is unbounded but has nonempty boundary, we
expect to have a flux of mass through the boundary and the Laplacian
to have continuous spectrum $(0,\infty)$  so the evolution of the mass  is unclear.  Actually,
integrating the equation in $\Omega$ we obtain 
\begin{displaymath}
	\frac{d}{dt}\int_\Omega u(x,t)dx=\int_\Omega \Lap u(x,t)dx
        =\int_{\partial \Omega} \frac{\partial u}{\partial n}(x,t)dx
        . 
\end{displaymath}
So, if $u\geq 0$ and  $\restr{u}{\partial \Omega}=0$ then
$\frac{\partial u}{\partial n} \leq 0$ on $\partial \Omega$ and then
$\D \int_\Omega u(x,t)dx$ decreases in
time although we have no quantitative estimate of the decay. 
The same argument holds for Robin boundary conditions $\frac{\partial u}{\partial 
  n}+b  u=0$ with $b >0$, while for Neumann, $b=0$, again
mass is conserved.

In this paper  we consider a connected exterior domain,
that is, the complement of a compact set $\hole$ that we denote the
\emph{hole}, which is the closure of a bounded smooth set; hence, 
$\Omega=\RN\backslash \hole$.  
We will assume $0\in
\mathring{\hole}$, the interior of the hole,  and observe that   $\hole$ may have different connected
components, although $\Omega$ is connected.

As we have shown, the phenomenon of loss
of mass depends on the boundary conditions in the hole and we are interested in
understanding and determining the amount of mass lost for any given
solution. We will show then that the answer depends on  the dimension. If $N\geq 3$, then there will be a
certain remaining mass, while in other cases, all the mass will be
lost through the hole. Also, we will show that we can explicitly
compute the amount of mass lost for each initial data $u_{0} \in
L^{1}(\Omega)$. 
More precisely we will show that there exists a nonnegative function,
$\Phi$, that we denote the asymptotic profile, determined
by the domain and boundary conditions alone, such that the amount of
mass not lost through the hole by a solution with initial data $u_{0}\in
L^{1}(\Omega)$, that is, the \emph{asymptotic mass} of the solution, 
is given by
\begin{displaymath}
  m_{u_0} = \int_\Omega u_0(x) \Phi (x) \,
  dx 
\end{displaymath}
see Proposition \ref{prop:parabolicremaining}. It is then crucial to
understand this function $\Phi$. In this direction we will show that
$\Phi \equiv 1$ for Neumman boundary conditions in any dimensions
(hence no loss of mass at all for any solution), while for Robin or
Dirichlet boundary conditions, if $N\leq 2$ then $\Phi =0$. That is,
all mass is lost through the boundary. On the other hand, if $N \geq
3$, then
\begin{displaymath}
  1-\frac{C}{\abs{x}^{N-2}}\leq \Phi(x) \leq 1 \qquad 
  x\in \Omega 
\end{displaymath}
see Theorem \ref{thm:compperf}. Also, the dependence of the loss of
mass with respect to the boundary conditions is analysed in
Proposition \ref{prop:compasym}.

Finally in Theorem \ref{thm:rate_loss_mass} we study the 
rate of mass loss and prove that, except for Neumann
boundary conditions,  for $N\geq 3$ all
solutions lose mass at a uniform rate, while if  $N\leq 2$ there are
solutions for which the mass  decays to zero  as slow as we want. 
  
The paper is organised as follows.  In Section
  \ref{sec:pre}, we introduce the setting of the problem and the
  general boundary conditions we consider. We  prove the main results
  regarding the existence and regularity of solutions and some
  comparison results that will be very useful thereafter. 
  In Section   \ref{sec:profiles}, we construct the asymptotic profile
  for the problem, which is determined by the domain and the boundary
  conditions. In Section \ref{sec:asymptoticmass} we show that the
  asymptotic profile allows us to explicitly determine the amount of
  mass lost by each solution, see Proposition
  \ref{prop:parabolicremaining}. We will also provide some estimates
  on the behavior of the profile that, in particular, will imply the
  dimension dependent behaviour discussed above, see Theorem
  \ref{thm:compperf}. Appendix \ref{sec:schauder-estimates} contain
  some classical Schauder-type estimates for harmonic functions  that
  are used for the main result.

\section{The problem and preliminary elliptic and parabolic  results}
\label{sec:pre}


In this section we consider a slightly more general setting than that
of an exterior domain, by allowing $\Omega$ to be a connected open set
with compact boundary. That incudes the case of exterior domains but
also bounded ones. 

Hence, we will study the heat equation 
\begin{equation} \label{eq:heat_theta} 
  \left\{
	\begin{aligned}
		u_t-\Lap u = 0 \quad & in \ \Omega\times(0,T) \\
		B_\theta(u)=0 \quad & on \ \partial\Omega\times[0,T] \\
		u=u_0 \quad & in \ \Omega\times\{0\} , 
	\end{aligned}	
	\right. 
\end{equation}
where $u_0\in L^1(\Omega)$ and   we  consider Dirichlet, Robin or
Neumann homogeneous boundary conditions on $\partial \Omega$,  written in the form 
\begin{equation}
	\label{eqn:thetabc}
	B_\theta(u)\defeq \sin(\frac{\pi}{2}\theta(x))\frac{\partial u}{\partial n}+\cos(\frac{\pi}{2}\theta(x))u,
\end{equation}
where $\theta\in C(\partial \Omega, [0,1])$ satisfies one of the
following cases in each connected component of $\partial \Omega$: 
\begin{enumerate}
	\item Dirichlet conditions: $\theta\equiv 0$
	\item Mixed Neumann and Robin conditions:
          $0<\theta\leq 1$ . 
\end{enumerate}
In particular, if $\theta\equiv 1$ we recover Neumann
boundary conditions. 
In general, we will refer to these as homogeneous $\theta$-boundary
conditions.    Note that, by suitably choosing $\theta(x)$,
(\ref{eqn:thetabc}) includes all boundary conditions of the 
form $\frac{\partial u}{\partial n}+b(x)u=0$. The restriction $0\leq
\theta \leq 1$ makes $b(x)\geq 0$ which is the standard dissipative
condition. The reason for these notations will be seen in the results
below about monotonicity of solutions with respect to $\theta$, see
Section \ref{sec:comparison_theta}.

As a general notation, for a given function $\theta$ as above, we  define the Dirichlet part of $\partial \Omega$ as
	\begin{displaymath}
		\partial^D \Omega\defeq \{x \in \partial \Omega \ : \ \theta(x)=0\},
	\end{displaymath}
	 the Robin part of $\partial \Omega$ as
	\begin{displaymath}
		\partial^R \Omega\defeq \{x \in \partial \Omega \ : \ 0<\theta(x)<1\},
	\end{displaymath}
	and  the Neumann part of $\partial \Omega$ as
	\begin{displaymath}
		\partial^N \Omega\defeq \{x \in \partial \Omega \ : \
                \theta(x)=1\} . 
	\end{displaymath}	
The conditions imposed on $\theta$ imply that $\partial^D \Omega$ is a
union of connected components of $\partial \Omega$, although Neumann
and Robin conditions can coexist in the same connected component of
$\partial \Omega$.

In general we will use a superscript $\theta$ to denote anything
related to (\ref{eq:heat_theta}).  For example, the semigroup of
solutions to (\ref{eq:heat_theta}) will be denoted by  $S^\theta(t)$
and the associated kernel by $k^\theta(x,y,t)$, see Section \ref{sec:semigroup_theta}. Sometimes, we
will add as subscript $\Omega$ to indicate the dependence of these
objects in the domain.

\subsection{Some elliptic results}

We present some elliptic results based on an  $L^2$ framework. For
this we will denote 
    \begin{displaymath}
      H^1_\theta(\Omega)=\{u\in H^1(\Omega) \ : \ \restr{u}{\partial^D \Omega}\equiv 0\}.
    \end{displaymath}
which is a closed subspace of
$H^1(\Omega)$. Then we have the following standard result, based on
Lax-Milgram theorem and the coercivity of the bilinear form in $
H^1_\theta(\Omega)$ 
\begin{displaymath}
  		a_\theta(u,\varphi) = \int_{\Omega}\nabla u \nabla
                \varphi + \gamma\int_{\Omega}u\varphi  +
                \int_{\partial^R \Omega}\cot(\frac{\pi}{2}\theta) u
                \varphi . 
\end{displaymath}
Notice that here we use the fact that $0<\theta <1$ on $\partial^R
\Omega$ so $0<\cot(\frac{\pi}{2}\theta)<\infty$ on that set.

\begin{theorem}
	\label{thm:L2ex}
	Given $\Omega$ a domain with compact boundary and some
        homogeneous $\theta$-boundary condition and  $L\in (H^{1}_{\theta}(\Omega))'$, assume  $\gamma>
        0$ or $\gamma =0$ and $\theta \not \equiv 1$, that is, except
        Neumann boundary conditions.  Then the problem 
	\begin{equation}
		\label{eqn:probell}
		\left\{
		\begin{aligned}
			-\Lap u+\gamma u = L \quad & in \ \Omega \\
			B_\theta(u)=0 \quad & on \ \partial\Omega , 
		\end{aligned}	
		\right. 
	\end{equation}	
	has a unique weak solution $u\in H^1_\theta(\Omega)$, that is,
	\begin{equation}
		\label{eqn:thmL2exweak}
		\int_{\Omega}\nabla u \nabla \varphi +
                \gamma\int_{\Omega}u\varphi  + \int_{\partial^R
                  \Omega}\cot(\frac{\pi}{2}\theta) u \varphi=
                L(\varphi)  \qquad \forall \varphi \in
                H^1_\theta(\Omega) 
	\end{equation}
and there exists a constant $C>0$ such that $\norm{u}_{H^1(\Omega)}\leq C\norm{L}_{(H^{1}_{\theta}(\Omega))'}$. 

  In particular, the mapping $(H^{1}_{\theta}(\Omega))' \ni L\mapsto u  \in H^{1}_{\theta}(\Omega)$ is an isomorphism. 
\end{theorem}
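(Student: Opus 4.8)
The plan is to apply the Lax-Milgram theorem to the bilinear form $a_\theta$ on the Hilbert space $H^1_\theta(\Omega)$. The theorem reduces to verifying the three standard hypotheses: that $a_\theta$ is well-defined and bounded on $H^1_\theta(\Omega) \times H^1_\theta(\Omega)$, that $L$ is a bounded linear functional (which is assumed), and the crucial coercivity estimate $a_\theta(u,u) \geq \alpha \norm{u}_{H^1(\Omega)}^2$ for some $\alpha > 0$ and all $u \in H^1_\theta(\Omega)$. Once coercivity is established, Lax-Milgram immediately yields existence, uniqueness, and the bound $\norm{u}_{H^1(\Omega)} \leq C \norm{L}_{(H^1_\theta(\Omega))'}$, and the isomorphism statement follows because the solution map is linear, bounded, and bijective, so the open mapping theorem (or the explicit two-sided bound) gives continuity of the inverse.

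First I would check boundedness of $a_\theta$. The gradient and zeroth-order volume terms are controlled by $\norm{u}_{H^1}\norm{\varphi}_{H^1}$ directly. The boundary term $\int_{\partial^R \Omega}\cot(\frac{\pi}{2}\theta) u\varphi$ requires the trace theorem, using that $\partial \Omega$ is compact so the trace operator $H^1(\Omega) \to L^2(\partial\Omega)$ is bounded, together with the fact that $0 < \cot(\frac{\pi}{2}\theta) < \infty$ is bounded on $\partial^R\Omega$ (one should note $\cot$ is bounded on the closure of $\partial^R\Omega$ away from the Dirichlet part, or restrict attention to where the coefficient stays finite). Since $\cot(\frac{\pi}{2}\theta) \geq 0$ on $\partial^R\Omega$, this boundary integral is nonnegative when $u = \varphi$, so it only helps coercivity and never obstructs it.

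The main obstacle is the coercivity estimate in the borderline case $\gamma = 0$. When $\gamma > 0$, coercivity is immediate since $a_\theta(u,u) \geq \int_\Omega |\nabla u|^2 + \gamma \int_\Omega u^2 \geq \min(1,\gamma)\norm{u}_{H^1}^2$, the boundary term being nonnegative. When $\gamma = 0$ and $\theta \not\equiv 1$, I must bound $\norm{u}_{L^2(\Omega)}$ by $\norm{\nabla u}_{L^2(\Omega)}$, i.e.\ establish a Poincaré-type inequality on $H^1_\theta(\Omega)$. The hypothesis $\theta \not\equiv 1$ guarantees that either $\partial^D\Omega \neq \emptyset$ (so functions vanish on part of the boundary) or $\partial^R\Omega \neq \emptyset$ (so the boundary term provides extra control); in either case a Poincaré inequality holds. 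I would prove this by the standard compactness-contradiction argument: if no such inequality held, take a sequence $u_n \in H^1_\theta(\Omega)$ with $\norm{u_n}_{L^2} = 1$ and $\norm{\nabla u_n}_{L^2} \to 0$; by Rellich compactness a subsequence converges in $L^2$ to some $u$ with $\nabla u = 0$, hence $u$ constant on the connected $\Omega$, and the boundary constraint (vanishing trace on $\partial^D\Omega$, or forcing the nonnegative boundary term to vanish on $\partial^R\Omega$ which again pins the constant to zero) forces $u = 0$, contradicting $\norm{u}_{L^2}=1$.

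I would write the coercivity as $a_\theta(u,u) \geq \alpha \norm{u}_{H^1(\Omega)}^2$ in both regimes, combine it with boundedness, and invoke Lax-Milgram to conclude. One subtlety worth flagging is that $\Omega$ may be unbounded (the exterior-domain case), so the Poincaré argument must be handled carefully, since Rellich compactness of $H^1(\Omega) \hookrightarrow L^2(\Omega)$ can fail on unbounded domains; in that situation I would instead invoke a weighted Hardy-type or scaled Poincaré inequality valid on exterior domains, or localize near the compact boundary and control the behaviour at infinity separately. This interplay between the boundary condition encoded by $\theta \not\equiv 1$ and the geometry at infinity is the most delicate point of the argument.
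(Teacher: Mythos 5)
Your strategy (Lax--Milgram for $a_\theta$ on $H^1_\theta(\Omega)$) is exactly the one the paper uses, and for $\gamma>0$ your argument is complete and coincides with it: the boundary term is nonnegative, so $a_\theta(u,u)\geq\min(1,\gamma)\norm{u}_{H^1(\Omega)}^2$, and boundedness follows from the trace theorem together with the point you correctly flag, that $\cot(\frac{\pi}{2}\theta)$ is bounded on $\partial^R\Omega$ (each non-Dirichlet component of the compact boundary has $\theta$ continuous and positive, hence bounded below). Your Rellich compactness argument for $\gamma=0$, $\theta\not\equiv 1$ is also correct \emph{when $\Omega$ is bounded}, and in that respect you are more careful than the paper, whose coercivity bound degenerates as $\gamma\to 0$.

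The genuine gap is the case you flag in your final paragraph and leave unresolved: $\gamma=0$ with $\Omega$ unbounded, i.e.\ precisely the exterior-domain case the paper cares about. None of the repairs you suggest can close it, because coercivity of $a_\theta$ with respect to the $H^1(\Omega)$ norm is genuinely false there. Take $0\neq\phi\in C_c^\infty(B(0,1))$ and $u_n(x)=\phi\big(\frac{x-x_n}{n}\big)$ with $x_n$ chosen so that $B(x_n,n)\subset\Omega$; then $u_n\in H^1_\theta(\Omega)$ has zero trace and
\begin{displaymath}
a_\theta(u_n,u_n)=\norm{\nabla u_n}_{L^2(\Omega)}^2=n^{N-2}\norm{\nabla\phi}_{L^2}^2,
\qquad
\norm{u_n}_{L^2(\Omega)}^2=n^{N}\norm{\phi}_{L^2}^2 ,
\end{displaymath}
so $a_\theta(u_n,u_n)/\norm{u_n}_{H^1(\Omega)}^2\to 0$. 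A Hardy-type inequality cannot help, since it controls only $\int_\Omega u^2/\abs{x}^2$ and never $\int_\Omega u^2$. In fact no localization or weighting argument can work, because the conclusion itself fails in this case: the map $A\colon H^1_\theta(\Omega)\to (H^1_\theta(\Omega))'$, $Au=a_\theta(u,\cdot)$, is bounded and injective, and the same sequence satisfies $\norm{Au_n}_{(H^1_\theta(\Omega))'}\leq\norm{\nabla u_n}_{L^2(\Omega)}$, hence $\norm{u_n}_{H^1(\Omega)}\geq c\,n\,\norm{Au_n}_{(H^1_\theta(\Omega))'}$; if $A$ were onto, the open mapping theorem would force this ratio to stay bounded. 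So for $\gamma=0$ on an unbounded domain with compact boundary, existence fails for some $L$ and the stated isomorphism is false. The $\gamma=0$ part of the theorem is salvageable only for bounded $\Omega$ (where your compactness argument proves it), or after replacing $H^1_\theta(\Omega)$ by the homogeneous space whose norm is $\norm{\nabla\cdot}_{L^2(\Omega)}$. Note that the paper shares this defect --- it asserts coercivity of $a_\theta$ in $H^1_\theta(\Omega)$ without restriction --- and that its later applications are unaffected, since semigroup generation only invokes solvability for $\gamma>0$ and the profile construction works on the bounded truncations $\Omega_R$; but your proof, as written, does not (and cannot) cover the unbounded $\gamma=0$ case it claims to address.
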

\begin{reserva2}
  \begin{proof}
    If we consider
    \begin{equation}
      a_\theta(u,\varphi) = \int_{\Omega}\nabla u \nabla \varphi + \gamma\int_{\Omega}u\varphi  + \int_{\partial^R \Omega}\cot(\frac{\pi}{2}\theta) u \varphi,
    \end{equation}
    we have that it is a bilinear form in $H^1(\Omega)$ (so in
    $H^1_\theta(\Omega)$) which is coercive because, as
    $\cot(\frac{\pi}{2}\theta)u^2\geq 0$
    \begin{equation}
      a_\theta(u,u)\geq \int_{\Omega}\abs{\nabla u}^2 + \gamma\int_{\Omega}u^2 \geq \min(1,\gamma)\norm{u}_{H^1(\Omega)}^2.
    \end{equation}
    Hence, we can use Lax-Milgram theorem (See \cite{evans} Section
    6.2) to obtain a solution of $a_\theta(u,\varphi)=L_f(\varphi)$
    where $L_f(\varphi)=\int_\Omega f\varphi$. In addition, we obtain
    \begin{equation}
      \label{eqn:fach1}
      \norm{u}_{H^1(\Omega)}\leq C\norm{f}_{L^2(\Omega)}.
    \end{equation}
  \end{proof}
\end{reserva2}

When $L \in  (H^{1}_{\theta}(\Omega))'$ is given by a function   $f\in
L^2(\Omega)$ in the sense that $L(\varphi)=  \int_{\Omega} f\varphi$,
with  some modified arguments of the standard theory of regularity we obtain:
\begin{theorem}
	\label{thm:L2re}
	Let $u$ be a weak solution of problem (\ref{eqn:probell}) with
        $\gamma>0$ or $\gamma =0$ and $\theta \not \equiv 1$, that is, except
        Neumann boundary conditions and $L(\varphi)=  \int_{\Omega}
        f\varphi$ with $f\in L^2(\Omega)$.
        If the boundary $\partial \Omega$ is of class $C^2$ and $\theta\in
        C^1(\partial \Omega)$, then $u\in H^2(\Omega)$ and there exists a
        $C>0$ independent of $f$ such that 
	\begin{equation}
		\label{eqn:L2reeq1}
		\norm{u}_{H^2(\Omega)}\leq C\norm{f}_{L^2(\Omega)}.	
              \end{equation}

In particular, the mapping $f\mapsto u$ defines an
isomorphism from $L^{2}(\Omega)$ into
\begin{displaymath}
 	D(\Lap_\theta)  = \{u \in H^{2}(\Omega), \ B_{\theta}(u) =0 \
        \mbox{on $\partial\Omega$} \} 
\end{displaymath}
endowed with the norm of $H^{2}(\Omega)$, which is dense in
$H^{1}_{\theta}(\Omega)$.   The inverse of this 
operator is $-\Delta + \gamma I$ on   $D(\Lap_\theta)$ and is
a closed and selfadjoint  operator in  $L^{2}(\Omega)$.  
\end{theorem}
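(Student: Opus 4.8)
The statement bundles a regularity result with several functional-analytic consequences, and the plan is to prove it in that order. Existence and uniqueness of the weak solution, together with the bound $\norm{u}_{H^1(\Omega)}\leq C\norm{f}_{L^2(\Omega)}$, are already supplied by Theorem \ref{thm:L2ex}, so the only genuinely analytic step is the gain of one derivative up to the boundary, uniformly over the possibly unbounded domain $\Omega$; everything after the a priori estimate \eqref{eqn:L2reeq1} is soft. I would therefore first establish $u\in H^2(\Omega)$ with \eqref{eqn:L2reeq1}, and then deduce the isomorphism, density, closedness and selfadjointness.

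For the regularity I would localise by a partition of unity adapted to the fact that $\partial\Omega$ is compact and $C^2$: a cutoff $\psi_0$ supported in $\Omega$ at positive distance from $\partial\Omega$ (possibly with unbounded support), plus finitely many charts $\psi_1,\dots,\psi_m$ covering a collar of $\partial\Omega$, with $\psi_0+\sum_{j\geq1}\psi_j\equiv1$ on $\Omega$. For the far-field piece $w_0=\psi_0u$, extended by zero, one has $w_0\in H^1(\RN)$ and $(-\Lap+\gamma)w_0=\psi_0f-2\nabla\psi_0\cdot\nabla u-(\Lap\psi_0)u=:g_0$ with $\norm{g_0}_{L^2(\RN)}\lesssim\norm{f}_{L^2}+\norm{u}_{H^1}$, so the whole-space (Fourier/Calderón--Zygmund) estimate $\norm{D^2w_0}_{L^2(\RN)}\lesssim\norm{g_0}_{L^2(\RN)}$ closes this part with no boundary contribution. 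On each boundary chart I would flatten $\partial\Omega$ by a $C^2$ diffeomorphism, turning $-\Lap+\gamma$ into a uniformly elliptic operator with Lipschitz coefficients on a half-ball $B^+$ and the boundary condition into either a homogeneous Dirichlet condition or a Robin condition $\partial_\nu u+b\,u=0$ on the flat face, with $b=\cot(\frac{\pi}{2}\theta)$. I would then run the method of tangential difference quotients, testing the weak formulation with $\varphi=-D^{-h}_k(\zeta^2 D^h_k u)$ for tangential directions $k<N$; this is admissible in $H^1_\theta(\Omega)$ because on a Robin/Neumann component there is no Dirichlet constraint and on a Dirichlet chart $\varphi$ still vanishes on $\{y_N=0\}$. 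This controls all tangential second derivatives $\partial_{kl}u$ ($k<N$) by $\norm{f}_{L^2}+\norm{u}_{H^1}$, after which the equation, solved for $\partial_{NN}u$ using the uniform lower bound on the coefficient of $\partial_{NN}u$, yields $\partial_{NN}u\in L^2$ with the same bound.

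The main obstacle is the boundary term arising in the Robin charts. Testing reproduces $\int_{\{y_N=0\}}b\,u\,\varphi$, and summation by parts in the tangential variable splits it into $\int b\,\zeta^2(D^h_ku)^2\ge0$, which has the favourable sign precisely because $0\le\theta\le1$ forces $b=\cot(\frac{\pi}{2}\theta)\ge0$ and can be discarded, plus a commutator term involving $D^h_kb$. Here the hypothesis $\theta\in C^1(\partial\Omega)$ is essential: it makes $b$ of class $C^1$ wherever $\theta>0$, so $D^h_kb$ is uniformly bounded (the Neumann locus $\theta=1$ being harmless since there $b=0$). That commutator, together with the right-hand side, is absorbed via the trace theorem and Young's inequality into $\norm{u}_{H^1}$ plus a small multiple of the tangential second-derivative norm, closing the estimate. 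It is the structural fact, noted in the excerpt, that $\partial^D\Omega$ is a union of entire connected components that prevents a genuinely mixed Dirichlet--Robin chart, so each chart is of one pure type and the classical arguments apply essentially verbatim. Assembling the charts gives $\norm{u}_{H^2(\Omega)}\lesssim\norm{f}_{L^2}+\norm{u}_{H^1}$, and Theorem \ref{thm:L2ex} absorbs the last term to yield \eqref{eqn:L2reeq1}.

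The remaining assertions then follow formally. The solution map $T_0\colon f\mapsto u$ is bounded $L^2(\Omega)\to H^2(\Omega)$ by \eqref{eqn:L2reeq1}, takes values in $D(\Lap_\theta)$ by the regularity just proved, and is injective by uniqueness. Conversely, given $u\in D(\Lap_\theta)$, setting $f=-\Lap u+\gamma u\in L^2(\Omega)$ and integrating by parts using $B_\theta(u)=0$ shows that $u$ satisfies \eqref{eqn:thmL2exweak} for this $f$, so $T_0f=u$; hence $T_0$ is onto $D(\Lap_\theta)$ with inverse $-\Lap+\gamma I$, trivially bounded from $(D(\Lap_\theta),\norm{\cdot}_{H^2})$ into $L^2(\Omega)$, giving the claimed isomorphism. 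For density, recall from Theorem \ref{thm:L2ex} that $T\colon(H^1_\theta(\Omega))'\to H^1_\theta(\Omega)$ is an isomorphism, that $L^2(\Omega)$ embeds densely in $(H^1_\theta(\Omega))'$ (dual to the dense embedding $H^1_\theta(\Omega)\hookrightarrow L^2(\Omega)$), and that $T(L^2(\Omega))=D(\Lap_\theta)$; since an isomorphism carries dense subspaces onto dense subspaces, $D(\Lap_\theta)$ is dense in $H^1_\theta(\Omega)$. Finally, with $A=-\Lap+\gamma I$ on $D(\Lap_\theta)$, its everywhere-defined bounded inverse $T_0$ forces $A$ to be closed; and $T_0$ is symmetric, since for $u=T_0f$, $w=T_0g$ the symmetry of $a_\theta$ gives $\langle T_0f,g\rangle_{L^2}=a_\theta(w,u)=a_\theta(u,w)=\langle f,T_0g\rangle_{L^2}$, so $T_0$ is a bounded injective selfadjoint operator and therefore $A=T_0^{-1}$ is selfadjoint.
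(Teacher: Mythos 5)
Your proposal is correct, and its overall skeleton matches the paper's: both exploit the compactness of $\partial\Omega$ to split the regularity estimate into a bounded collar around the boundary plus an unbounded far field, and both close the estimate by absorbing lower-order terms with the $H^1$ bound of Theorem \ref{thm:L2ex}; the functional-analytic endgame (isomorphism, density via the two isomorphisms together with density of $L^2(\Omega)$ in $(H^1_\theta(\Omega))'$, closedness from the bounded everywhere-defined inverse, selfadjointness from symmetry of $a_\theta$) is essentially identical to the paper's, just written in more detail. The two halves of the regularity estimate, however, are implemented differently. Near the boundary the paper multiplies by a cutoff $\chi$ and invokes the classical bounded-domain $H^2$ theory as a black box applied to $u\chi$ on $\Omega_{2R}$ (citing Mikhailov/Evans), whereas you re-prove that theory from scratch by flattening and tangential difference quotients, including the sign/commutator analysis of the Robin term; this is more self-contained but longer, and note that the uniform boundedness of $D^h_k b$ really rests on $\theta$ being bounded away from $0$ on each Robin/Neumann component (which holds by compactness of the component and the standing assumption $0<\theta\le 1$ there), not merely on $\theta>0$ pointwise, so that point deserves a sentence. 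In the far field the paper covers $\Omega\setminus\Omega_R$ by countably many balls with bounded overlap and sums interior estimates, while you extend $\psi_0 u$ by zero and use the global Calder\'on--Zygmund/Fourier identity on $\RN$; your route is arguably cleaner, since it avoids the covering argument altogether. One small point to make explicit: regularity alone gives $u\in H^2(\Omega)$, and membership in $D(\Lap_\theta)$ additionally requires recovering $B_\theta(u)=0$ pointwise on $\partial\Omega$ from the weak formulation via Green's formula (the paper states this step); you use exactly this identity in the converse direction, so the ingredient is present in your argument, but it should also be invoked for the solution map itself.
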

\begin{proof}
If $\Omega$ is bounded the result is standard and can be found for
Dirichlet boundary conditions e.g. in \cite{evans} Section 6.3 Theorem 4 and for Neumann and
Robin ones in \cite{Mikhailov} Page 217,
Theorem 4 and footnote.
If $\Omega$ is
unbounded, and hence an exterior domain,  
the proof is not easy to find in the literature, so we give a proof. 

In this case,   consider $\Omega_R=\Omega\cap B(0,R)$ with $R$ large enough so
  that $\partial \Omega \subset \partial \Omega_R$. Then, we consider
  a cut-off function $\chi\in C^\infty_c(\Omega_{2R})$ such that
  $\chi(\Omega_R)\equiv 1$. Then  $\tilde{u} = u\chi$ is a weak
  solution of 
	\begin{displaymath}
		\left\{
		\begin{aligned}
			- & \Lap \tilde{u} + \gamma \tilde{u} = \tilde{f} &&in \ \Omega_{2R} \\
			& B_\theta(\tilde{u})=0 &&on \ \partial \Omega_{2R}
		\end{aligned}
		\right.
	\end{displaymath}
with $\tilde{f}= f\chi -\nabla u \nabla \chi - u \Lap \chi$. 
Hence,
\begin{displaymath}
\norm{\tilde{f}}_{L^2(\Omega_{2R})}\leq C\norm{f}_{L^2(\Omega_{2R})} +
C\norm{u}_{H^1(\Omega_{2R})}\myleq{Thm \ref{thm:L2ex}}
C\norm{f}_{L^2(\Omega_{2R})}. 
\end{displaymath}

Now, from the regularity of the boundary  $\partial \Omega$ and
$\theta$, we can use classical elliptic
regularity results in  bounded domains (see for example
\cite{Mikhailov} Page 217,  Theorem 4 and footnote) to obtain
$\norm{\tilde{u}}_{H^2(\Omega_{2R})}\leq
C\norm{\tilde{f}}_{L^2(\Omega_{2R})} \leq
C\norm{f}_{L^2(\Omega_{2R})}$.  
Thus,
\begin{equation}
	\label{eqn:uacofcerca}
	\norm{u}_{H^2(\Omega_{R})}\leq C \norm{f}_{L^2(\Omega)}.
\end{equation}

Now, classical interior regularity results (see for example
\cite{evans} Section 6.3) guarantee that $u\in
H^2_{loc}(\Omega)$ and, for any two concentric balls
$B(x,r)\subset B(x,2r)\subset\Omega$ 
\begin{displaymath}
	\label{eqn:reg1}
	\norm{u}_{H^2(B(x,r))}\leq C(r)(\norm{f}_{L^2(B(x,2r))}+\norm{u}_{L^2(B(x,2r))}).
\end{displaymath}
Then, choosing $r>0$ sufficiently small and covering
$\Omega\backslash \Omega_{R}$ with a countable family of balls
$B(x_i,r)\subset B(x_i,2r)\subset \Omega)$ in a way that every
point $x\in\Omega$ is contained only in a finite number  ($m$,
independent of $x$) of balls $B(x_i,2r)$ we obtain 
\begin{equation} \label{eqn:uacoflejos}
\begin{aligned}
\norm{u}_{H^2(\Omega\backslash\Omega_R)} &
\leq \sum_{i} \norm{u}_{H^2(B(x_i,r))}\leq
\sum_{i}  C(r)(\norm{f}_{L^2(B(x_i,2r))}+\norm{u}_{L^2(B(x_i,2r))})
\\
& \leq m
C(r)(\norm{f}_{L^2(\Omega)}+\norm{u}_{L^2(\Omega)})
\myleq{Thm \ref{thm:L2ex}}
C\norm{f}_{L^2(\Omega)}. 
\end{aligned}
\end{equation}
Finally, combining (\ref{eqn:uacofcerca}) and (\ref{eqn:uacoflejos}), we obtain (\ref{eqn:L2reeq1}).

Once $u\in H^{2}(\Omega)$, integrating in parts in the weak
formulation (\ref{eqn:thmL2exweak}) we easily get $B_{\theta}(u)=0$ on
$\partial \Omega$. Hence, the description of $D(\Delta_{\theta})$
follows. That this space is dense in $H^{1}_{\theta}(\Omega)$ is
because  the isomorphisms in Theorems \ref{thm:L2ex} and
\ref{thm:L2re} and the fact that $H^{1}_{\theta}(\Omega)$ is dense in
$L^{2}(\Omega)$, which implies in turn that $L^{2}(\Omega)$ is dense
in $ (H^{1}_{\theta}(\Omega))'$. 
The rest also follows easily. 
      \end{proof}

\begin{reserva2}
 
  \begin{proposition}
    Given $\gamma>0$, $(\Lap_\theta+\gamma, D(\Lap_\theta))$ is a
    closed self-adjoint bijective operator.
  \end{proposition}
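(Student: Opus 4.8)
The plan is to realise the operator of the statement, namely $-\Lap + \gamma I$ with domain $D(\Lap_\theta)$, as the inverse of a bounded solution operator on $L^2(\Omega)$, and then transfer the good properties of that bounded operator to its inverse. Concretely, for $f \in L^2(\Omega)$ let $Tf \defeq u$ be the unique weak solution of (\ref{eqn:probell}) with $L(\varphi) = \int_\Omega f\varphi$ provided by Theorem \ref{thm:L2ex}. By Theorem \ref{thm:L2re}, $u \in D(\Lap_\theta)$, the estimate $\norm{u}_{H^2(\Omega)} \leq C\norm{f}_{L^2(\Omega)}$ holds, and $f \mapsto u$ is an isomorphism of $L^2(\Omega)$ onto $D(\Lap_\theta)$ endowed with the $H^2$ norm. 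Composing with the continuous embedding $H^2(\Omega) \hookrightarrow L^2(\Omega)$ shows that $T$ is a bounded operator on $L^2(\Omega)$, and since it is an isomorphism onto $D(\Lap_\theta)$ it is in particular injective with $\mathrm{Range}(T) = D(\Lap_\theta)$.

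Next I would show that $T$ is self-adjoint as an everywhere-defined bounded operator on $L^2(\Omega)$. Given $f, g \in L^2(\Omega)$, write $u = Tf$ and $v = Tg$, both lying in $D(\Lap_\theta) \subset H^1_\theta(\Omega)$. Testing the weak formulation (\ref{eqn:thmL2exweak}) for $u$ against $\varphi = v$ gives $a_\theta(u,v) = \int_\Omega f v$, while testing the one for $v$ against $\varphi = u$ gives $a_\theta(v,u) = \int_\Omega g u$. Since the bilinear form $a_\theta$ is manifestly symmetric (each of its three terms is symmetric in its arguments, $\cot(\frac{\pi}{2}\theta)$ being a fixed coefficient on $\partial^R\Omega$), we obtain $(Tf, g)_{L^2} = (f, Tg)_{L^2}$. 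A bounded symmetric operator defined on all of $L^2(\Omega)$ is self-adjoint; as a bonus this also yields positivity, since $(Tf, f)_{L^2} = a_\theta(u,u) = \int_\Omega \abs{\nabla u}^2 + \gamma\int_\Omega u^2 + \int_{\partial^R\Omega}\cot(\frac{\pi}{2}\theta)u^2 \geq 0$, all three terms being nonnegative.

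Finally, set $A \defeq T^{-1}$ with $D(A) = \mathrm{Range}(T) = D(\Lap_\theta)$; this is precisely the operator $-\Lap + \gamma I$ of the statement, because $Au = f \iff u = Tf \iff -\Lap u + \gamma u = f$ with $B_\theta(u) = 0$ (integrating by parts, the boundary terms vanish). Since $T$ is a bijection of $L^2(\Omega)$ onto $D(\Lap_\theta)$, $A$ is a bijection of $D(\Lap_\theta)$ onto $L^2(\Omega)$, giving surjectivity (existence) and injectivity (uniqueness) at once; closedness of $A$ is automatic, as the inverse of a bounded injective operator is closed. The one point that needs a genuine functional-analytic argument, and hence the step I would treat most carefully, is the self-adjointness of the unbounded operator $A$: here I would invoke the standard fact that if $T$ is a bounded, injective, self-adjoint operator on a Hilbert space, then its range is dense (indeed $\mathrm{Range}(T)^\perp = \ker T^* = \ker T = \{0\}$) and the densely defined inverse $T^{-1}$ is again self-adjoint. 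Density of $D(\Lap_\theta)$ in $L^2(\Omega)$ was already recorded in Theorem \ref{thm:L2re}, so the hypotheses are met and $A$ is self-adjoint, and therefore closed.
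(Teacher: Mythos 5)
Your proof is correct, and it reaches the conclusion by a genuinely different route than the paper's. The paper works directly with the unbounded operator: bijectivity is quoted from Theorems \ref{thm:L2ex} and \ref{thm:L2re}; closedness is proved by hand (given $u_n\in D(\Lap_\theta)$ with $u_n\to u$ and $\Lap_\theta u_n\to f$ in $L^2(\Omega)$, the a priori estimate (\ref{eqn:L2reeq1}) applied to differences $u_n-u_m$ shows the sequence is Cauchy in $H^2(\Omega)$, so the limit lies in $D(\Lap_\theta)$); and self-adjointness is addressed by verifying the symmetry identity $\int_\Omega u\,\Lap_\theta v=\int_\Omega v\,\Lap_\theta u$ via Green's formula, the boundary terms cancelling because both functions satisfy $B_\theta=0$. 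You instead package Theorems \ref{thm:L2ex} and \ref{thm:L2re} into the bounded solution operator $T$, prove $T=T^*$ from the symmetry of the bilinear form $a_\theta$, and transfer all three properties to $A=T^{-1}$. Your route buys two things: closedness comes for free (the inverse of a bounded injective operator is closed), and the genuinely delicate point is handled rigorously---for unbounded operators symmetry is strictly weaker than self-adjointness, and the paper's Green's-identity computation establishes only symmetry, leaving implicit the needed upgrade (a densely defined symmetric operator that is surjective is self-adjoint), whereas your appeal to the standard theorem that the inverse of a bounded, injective, self-adjoint operator is again self-adjoint on its dense range fills exactly that step. What the paper's route buys is a more elementary and self-contained argument: it shows concretely how the $H^2$ estimate forces closedness and never needs the theorem on inverses of self-adjoint operators. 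One last remark: you sensibly read the operator as $-\Lap+\gamma I$ on $D(\Lap_\theta)$, consistent with (\ref{eqn:probell}) and with the closing assertion of Theorem \ref{thm:L2re}; with the literal sign $\Lap_\theta+\gamma$ in the statement, bijectivity would in fact fail for an exterior domain (where the spectrum of $-\Lap_\theta$ is all of $[0,\infty)$), so your reading is the correct one.
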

  \begin{proof}
    The bijectivity is a consequence of Theorems \ref{thm:L2re} and \ref{thm:L2ex}.\\
	
    Let us proved the closeness. Consider $u_n\in D(\Lap_\theta)$ such
    that $\norm{u_n-u}_{L^2(\Omega)}\to 0$ and
    $\norm{\Lap_\theta u_n - f}_{L^2(\Omega)}\to 0$. Then, rename
    $f_n=\Lap_\theta u_n$. As $u_n-u_m$ satisfy $B_\theta(u_n-u_m)=0$
    in $\partial \Omega$ and $\Lap_\theta (u_n-u_m) = f_n-f_m$ in
    $\Omega$, we have, using Theorem \ref{thm:L2re}:
    \begin{equation}
      \norm{u_n-u_m}_{H^2(\Omega)}\leq C\norm{f_n-f_m}_{L^2(\Omega)}\to 0.
    \end{equation}
    Therefore, $\{u_n\}$ is a Cauchy sequence in $H^2(\Omega)$ so it
    converges $u_n\to \tilde{u}$ in $H^2(\Omega)$. As $u_n\to u$ in
    $L^2(\Omega)$, we have that $\tilde{u}=u$, so $u\in H^2(\Omega)$,
    $B_\theta(u)=0$ and $\Lap_\theta u = f$. 
	
    Let us prove that $\Lap_\theta$ is self-adjoint. Take
    $u,v\in D(\Lap_\theta)$. Then,
    \begin{equation}
      \int_\Omega u\Lap_\theta v = \int_\Omega \Lap_\theta u v +
      \int_\Omega \left(u\frac{\partial v}{\partial n}-v\frac{\partial
          u}{\partial n}\right) = \int_\Omega \Lap_\theta u v +
      \int_{\partial^R \Omega} \cot(\frac{\pi}{2}\theta)
      \left(uv-vu\right) = \int_\Omega \Lap_\theta u v. 
    \end{equation}
  \end{proof}
\end{reserva2}

\subsection{Semigroup generated by $\Lap_\theta$}
\label{sec:semigroup_theta}

Now we present some results about the semigroup of solutions
associated to (\ref{eq:heat_theta}), that we will denote
$S^\theta(t)$, which is the semigroup generated by $\Lap_\theta$. If
at some point we want to stress the dependence on 
the domain, we will denote it by $S_\Omega^\theta(t)$. 

We start with the case of initial data in $L^{2}(\Omega)$. 
\begin{theorem}
  \label{thm:semigrupo_L2}
  
  Given $\Omega$ a domain with compact boundary and some
        homogeneous $\theta$-boundary conditions, the operator $(\Lap_\theta,
        D(\Lap_\theta))$ generates an analytic  $C^0$ semigroup of contractions
        $\{S^\theta(t)\}_{t>0}$ in $L^2(\Omega)$, that is,
a family  of bounded linear functions from   $L^2(\Omega)$ into itself
such that:  
\begin{enumerate}
\item
  Semigroup property:  $S^\theta(t+s)u_0=S^\theta(t)S^\theta(s)u_0$ for
         every $0<s<t$ and $u_{0}\in L^{2}(\Omega)$.
         
\item
         $C^0$ property: $\lim_{t\to 0}
         S^\theta(t)u_0 = u_0$ in $L^2(\Omega)$ for every
         $u_0\in L^2(\Omega)$.
         
\item
         Contraction property: $\norm{S^\theta(t)}_{\mathcal{L}(L^2(\Omega))}\leq 1$ for every $t>0$.

\item
         Satisfies the PDE: The semigroup is  analytic and therefore for every $u_{0}\in
         L^{2}(\Omega)$,          $u(t)=S^\theta(t)u_0 \in D(\Delta_{\theta})$ for $t>0$ and  satisfies
	\begin{displaymath}
		\left\{
		\begin{aligned}
& \frac{d}{d t} u(t)-\Lap_\theta u(t) = 0 && \forall t>0 \\
& B_\theta(u(t))=0 &&\forall t> 0 . 
\end{aligned}
			\right. 
		\end{displaymath}

        \item
Assume furthermore that the boundary $\partial \Omega$ is of class
$C^{m}$ and $\theta\in C^{m}(\partial \Omega)$ for $m$ large enough. Then for  $u_0\in
L^2(\Omega)$,  $u (x,t)=  S^\theta(t)u_0(x)$ is a $C^{2,1}(\adh{\Omega}\times(0,\infty))$
solution of the heat equation, that is 
		\begin{displaymath}
			\left\{
			\begin{aligned}
				u_t(x,t)-\Lap u(x,t) & = 0 && \forall(x,t)\in\Omega\times(0,\infty)
                                \\
				B_\theta(u)(x,t) & =0 && \forall x\in
                                \partial \Omega, \ \forall t>0 . 
			\end{aligned}
		\right. 
		\end{displaymath} 
              \end{enumerate}
\end{theorem}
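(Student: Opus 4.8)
The plan is to derive everything from the fact, already essentially contained in Theorem \ref{thm:L2re}, that $(-\Delta+\gamma I, D(\Delta_\theta))$ is a closed selfadjoint operator in $L^2(\Omega)$ for $\gamma>0$ (and for $\gamma=0$, $\theta\not\equiv 1$). Since $\gamma I$ is bounded and selfadjoint, this makes $(\Delta_\theta, D(\Delta_\theta))$ itself closed, densely defined (recall $D(\Delta_\theta)$ is dense in $H^1_\theta(\Omega)$, hence in $L^2(\Omega)$) and selfadjoint. The key additional property is that $\Delta_\theta$ is non-positive: for $u\in D(\Delta_\theta)$, integrating by parts and using $B_\theta(u)=0$ on $\partial\Omega$ (so $u=0$ on $\partial^D\Omega$, $\frac{\partial u}{\partial n}=-\cot(\frac{\pi}{2}\theta)u$ on $\partial^R\Omega$, and no boundary term on $\partial^N\Omega$) gives
\[
  \langle -\Delta_\theta u, u\rangle_{L^2(\Omega)} = \int_\Omega \abs{\nabla u}^2 + \int_{\partial^R\Omega} \cot(\tfrac{\pi}{2}\theta)\, u^2 \geq 0,
\]
since $0<\cot(\frac{\pi}{2}\theta)<\infty$ on $\partial^R\Omega$. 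Hence $\Delta_\theta\leq 0$.

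Second, I would invoke the standard generation theorem for selfadjoint operators bounded above: a non-positive selfadjoint operator on a Hilbert space generates an analytic $C^0$ semigroup of contractions. Equivalently, via Lumer--Phillips, $\Delta_\theta$ is densely defined and dissipative by the displayed identity, while $\gamma I-\Delta_\theta$ is surjective onto $L^2(\Omega)$ for every $\gamma>0$, which is exactly the isomorphism statement of Theorem \ref{thm:L2re}; analyticity then follows from selfadjointness together with semiboundedness. This yields at once the semigroup property (i), the $C^0$ property (ii), and the contraction property (iii); for (iii) one may alternatively note that $\frac{d}{dt}\norm{u(t)}_{L^2(\Omega)}^2 = 2\langle \Delta_\theta u, u\rangle \leq 0$. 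Property (iv) is then a consequence of analyticity: for every $u_0\in L^2(\Omega)$ and $t>0$ one has $u(t)=S^\theta(t)u_0\in D(\Delta_\theta)$, the map $t\mapsto u(t)$ is differentiable into $L^2(\Omega)$ with $\frac{d}{dt}u(t)=\Delta_\theta u(t)$, and $B_\theta(u(t))=0$ holds simply because $u(t)\in D(\Delta_\theta)$, whose description in Theorem \ref{thm:L2re} already encodes the boundary condition.

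Finally, property (v) is the one requiring real work and is the main obstacle. Here I would bootstrap regularity: since the semigroup is analytic, $u(t)\in D(\Delta_\theta^k)$ for every $k\in\N$ and every $t>0$, and $t\mapsto u(t)$ is smooth, indeed analytic, with values in each $D(\Delta_\theta^k)$. Using the $C^m$ regularity of $\partial\Omega$ and of $\theta$ one iterates higher-order elliptic regularity — the natural higher-order analogue of Theorem \ref{thm:L2re}, proved by the same localisation and cut-off argument used there — to obtain the embedding $D(\Delta_\theta^k)\hookrightarrow H^{2k}(\Omega)$. Choosing $2k>\frac{N}{2}+2$, the Sobolev embedding $H^{2k}(\Omega)\hookrightarrow C^2(\adh{\Omega})$ gives $u(t)\in C^2(\adh{\Omega})$ for each $t>0$, with spatial derivatives depending continuously on $t$; combining this with the time regularity coming from analyticity and $\partial_t u=\Delta_\theta u$ yields $u\in C^{2,1}(\adh{\Omega}\times(0,\infty))$ and the pointwise heat equation and boundary condition. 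The delicate points are establishing the higher-order elliptic estimates up to the boundary for the mixed Robin/Neumann conditions on the unbounded domain (which forces $m$ to be taken large in terms of $N$), and upgrading the $t$-dependence to the claimed joint $C^{2,1}$ regularity, uniformly on compact subintervals of $(0,\infty)$.
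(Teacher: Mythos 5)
Your proposal is correct and follows essentially the same route as the paper: the dissipativity computation via integration by parts, the resolvent surjectivity supplied by Theorems \ref{thm:L2ex} and \ref{thm:L2re} combined with Lumer--Phillips (with analyticity coming from selfadjointness), and for part (v) the same bootstrap $D(\Lap_\theta^k)\hookrightarrow H^{2k}(\Omega)\hookrightarrow C^2(\adh{\Omega})$ for $k$ large, followed by time-analyticity of the semigroup. The only difference is bibliographic: the paper invokes Lemma 3.1 of \cite{ACDRB04_linear} to pass from $C^2(\adh{\Omega})$-valued analyticity in $t$ to the joint $C^{2,1}$ regularity, which is precisely the step you flag as delicate.
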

\begin{proof}
(i)-(iv). 	Let us see that the operator $\Lap_{\theta}$ is dissipative. Take $u\in
        D(\Lap_\theta)$, then using $\cot(\frac{\pi}{2}\theta)u\geq
        0$, 
	\begin{displaymath} 
		\int_\Omega u\Lap u =-\int_\Omega \abs{\nabla u} ^2 +
                \int_{\partial \Omega} u \frac{\partial u}{\partial
                  n} =  -\int_\Omega \abs{\nabla u} ^2 -
                \int_{\partial^{R}\Omega} \cot(\frac{\pi}{2}\theta)u^2 \leq   0 .
	\end{displaymath}
	
	In addition, $D(\Lap_\theta)$ is dense in $L^2(\Omega)$ and
        $(0,\infty) \subset \rho(\Lap_\theta)$,  due to
        Theorem \ref{thm:L2ex} and Theorem \ref{thm:L2re}. Thus, we
        can use Lumer-Phillips Theorem (See \cite{pazy} Chapter 1
        Theorem 4.3) to obtain the $C^{0}$ semigroup of contractions. 
        The analiticity is a consequence of the selfadjointness of the
        operator see Theorem 3.2.1 in \cite{caz90:_introd_prob_evol}. 

        Finally, for (v), since  the semigroup is analytic, 
$S(t): L^2(\Omega) \to  D((-\Lap_\theta)^{k}) $ 
is continuous for any $k\in \mathbb{N}$. Now, using higher regularity estimates up
to the boundary (See \cite{evans} Section 6.3,
\cite{gilbarg2015elliptic} Section 6 or \cite{Mikhailov} Section
IV.2), we have that, if the boundary $\partial \Omega$ is of class
$C^{2k}$ and $\theta\in C^{2k-1}(\partial \Omega)$, then
$D((-\Lap_\theta)^{k})\subset 
H^{2k}(\Omega)$ continuously. Hence, for sufficiently large $k$,
\begin{reserva2}
	El resultado en concreto de Mikahilov se encuentra en P 217 Teorema 4.
\end{reserva2}
we have that   $S(t): L^2(\Omega) \to C^{2}(\adh{\Omega})$ is
continuous.
\begin{reserva}
  Esto requiere $2k-\frac{N}{2}> 2$, es decir $k>1+\frac{N}{4}$ 
\end{reserva}
This and Lemma 3.1 of  \cite{ACDRB04_linear}
implies  that, for any $u_0\in L^2(\Omega)$, $t\mapsto S(t)u_{0} \in C^2(\adh{\Omega})$ is
analytic. Therefore, $u(x,t) = S(t)u_0(x)$ belongs to $C^{2,1}(\adh{\Omega}\times(0,\infty))$.
\begin{reserva2}
  The same holds for $u_0\in L^1(\Omega)$ because
  $S(t/2)u_0\in L^2(\Omega)$ and the semigroup property
  $S(t)=S(t/2)S(t/2)u_0$.
\end{reserva2}
      \end{proof}

Now we show that we can extend the semigroup above to $L^{p}(\Omega)$
spaces  and it has nice properties. 

\begin{theorem}
	\label{thm:propiedades_sg_theta}
	The semigroup $\{S^\theta(t)\}_{t>0}$ above has the following properties:
	\begin{enumerate}
        \item
          It  extends to a semigroup of contractions in $L^p(\Omega)$
          for $1\leq p\leq\infty$ which is $C^0$ if $p\neq \infty$ and
          analytic if $1<p<\infty$. 

        \item
          $S^\theta(t)u_0\geq0$ for every $0\leq u_0\in L^p(\Omega)$
          with $1\leq p \leq \infty$. That is, the semigroup is order
          preserving.
          
        \item
          $S^\theta(t)$ is selfadjoint in $L^2(\Omega)$ and moreover
          for $1\leq p\leq\infty$, 
          \begin{equation}
			\label{eqn:Sexchange}
\int_\Omega fS^\theta(t)g = \int_\Omega gS^\theta(t)f  \qquad
\mbox{for all} \  f\in L^p(\Omega), \   g\in L^q(\Omega) 
		\end{equation}
	where $q$ is the conjugate of $p$,  that is $\frac{1}{p}+\frac{1}{q}=1$.

      \item
        If $\partial \Omega$ and $\theta$ are regular enough, the semigroup has an integral positive  kernel, that is $k^\theta:
        \Omega \times \Omega \times (0,\infty) \to
        (0,\infty)$ such that for all $1\leq p\leq\infty$ and
        $u_{0}\in L^{p}(\Omega)$,  
	\begin{equation}
		\label{eqn:ackrnpre}
		S^\theta(t)u_0(x)=\int_\Omega k^\theta(x,y,t)u_0(y)dy
                , \qquad x\in \Omega, \quad t>0. 
	\end{equation}
Moreover $k^\theta(x,y,t)=k^\theta(y,x,t)$. 
        
      \item
        $\{S^\theta(t)\}_{t>0}$ is an analytic $C^0$ semigroup in
        $BUC_\theta(\Omega)=\{u\in BUC(\Omega) : \restr{u}{\partial^D
          \Omega}\equiv 0\}$ where  $BUC(\Omega)$ is the space of  bounded uniformly
        continuous functions.
	\end{enumerate}
\end{theorem}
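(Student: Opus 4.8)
The plan is to bootstrap every assertion from the $L^2$ theory already established in Theorem \ref{thm:semigrupo_L2}, exploiting that $\Delta_\theta$ is the selfadjoint operator associated with the symmetric, coercive Dirichlet form $a_\theta$ of Theorem \ref{thm:L2ex}. The guiding principle is that $a_\theta$ is a (sub-)Markovian form, so that positivity, $L^p$-contraction, selfadjointness and the kernel representation all follow from this form structure together with the smoothing recorded in Theorem \ref{thm:semigrupo_L2}(v).

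For (ii) I would first show that the resolvent $(\lambda-\Delta_\theta)^{-1}$ is positivity preserving for $\lambda>0$. Writing $u=(\lambda-\Delta_\theta)^{-1}f$ with $0\le f$ and testing the weak formulation (\ref{eqn:thmL2exweak}) (with $\gamma=\lambda$) against $\varphi=u^-\in H^1_\theta(\Omega)$, the gradient and zero-order terms contribute $-\norm{\nabla u^-}_{L^2(\Omega)}^2-\lambda\norm{u^-}_{L^2(\Omega)}^2$, the Robin term contributes $-\int_{\partial^R\Omega}\cot(\tfrac{\pi}{2}\theta)(u^-)^2\le 0$, and the right-hand side equals $\int_\Omega f u^-\ge 0$; comparing signs forces $u^-\equiv 0$, i.e. $u\ge 0$ (note $u^-\in H^1_\theta(\Omega)$ since truncation preserves the vanishing Dirichlet trace). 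Positivity of $S^\theta(t)$ then follows from the exponential formula $S^\theta(t)=\lim_n (I-\tfrac{t}{n}\Delta_\theta)^{-n}$, see \cite{pazy}, as a limit of positive operators.

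For (i), the same form being sub-Markovian — the unit contraction $u\mapsto (u\vee 0)\wedge 1$ does not increase $a_\theta$, equivalently $S^\theta(t)1\le 1$ — combined with order preservation gives $\norm{S^\theta(t)f}_{L^\infty(\Omega)}\le\norm{f}_{L^\infty(\Omega)}$; duality through the $L^2$-selfadjointness yields the $L^1$ contraction, and Riesz–Thorin interpolation then gives contractions on every $L^p(\Omega)$, $1\le p\le\infty$. Strong continuity for $p<\infty$ follows from the density of $L^2(\Omega)\cap L^p(\Omega)$ and the uniform bounds, the failure at $p=\infty$ being the usual one. Analyticity on $L^p(\Omega)$ for $1<p<\infty$ I would deduce from analyticity on $L^2(\Omega)$ together with the sub-Markovian property, via the classical Stein-type interpolation result for symmetric sub-Markovian semigroups. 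For (iii), selfadjointness of $S^\theta(t)$ is immediate from selfadjointness of its generator (Theorem \ref{thm:L2re}); the exchange identity (\ref{eqn:Sexchange}) holds for $f,g\in L^2(\Omega)$ and extends to conjugate exponents $p,q$ by approximating $f,g$ by compactly supported $L^2$ functions and passing to the limit with the $L^p$ contraction bounds.

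The kernel in (iv) I would obtain from smoothing and duality: by Theorem \ref{thm:semigrupo_L2}(v), $S^\theta(t):L^2(\Omega)\to C(\adh{\Omega})\subset L^\infty(\Omega)$ is bounded for $t>0$, so by selfadjointness its dual $S^\theta(t):L^1(\Omega)\to L^2(\Omega)$ is bounded, and composing $S^\theta(t)=S^\theta(t/2)S^\theta(t/2)$ shows $S^\theta(t):L^1(\Omega)\to L^\infty(\Omega)$ is bounded; the Dunford–Pettis kernel theorem then produces $k^\theta(\cdot,\cdot,t)\in L^\infty(\Omega\times\Omega)$ realizing (\ref{eqn:ackrnpre}). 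Nonnegativity of $k^\theta$ comes from (ii), strict positivity from the strong maximum principle using connectedness of $\Omega$, and the symmetry $k^\theta(x,y,t)=k^\theta(y,x,t)$ from (\ref{eqn:Sexchange}). Finally, for (v) I would show that $S^\theta(t)$ leaves $BUC_\theta(\Omega)$ invariant and is strongly continuous there by using $k^\theta$ as an approximate identity, the boundary regularity from Theorem \ref{thm:semigrupo_L2}(v) ensuring that the vanishing Dirichlet trace is preserved, and obtain analyticity from the general theory of analytic semigroups generated by such elliptic operators on spaces of continuous functions. I expect the genuine obstacles to be exactly the two analyticity statements away from $L^2$ — on $L^p(\Omega)$ for $p\neq 2$ and on $BUC_\theta(\Omega)$ — together with the strong continuity and boundary-trace preservation in $BUC_\theta(\Omega)$, which demand care near the Dirichlet part of the boundary in the unbounded domain.
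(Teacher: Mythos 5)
Your treatment of parts (i)--(iv) is correct and is essentially the paper's own route. The paper works with the same quadratic form $Q(u)=\int_\Omega\abs{\nabla u}^2+\int_{\partial^R\Omega}\cot(\frac{\pi}{2}\theta)u^2$ on $H^1_\theta(\Omega)$ and outsources the sub-Markovian machinery to Davies \cite{davies1989heat}: Theorems 1.3.2/1.3.3 there give order preservation and the $L^p$ contractions, Theorem 1.4.1 gives consistency and the duality identity (\ref{eqn:Sexchange}), and Theorem 1.4.2 gives analyticity for $1<p<\infty$ (it is proved there exactly by the Stein interpolation argument you gesture at). Your unpacking of these steps (unit contraction, duality for the $L^1$ bound, Riesz--Thorin, density for the $C^0$ property) is the content of those citations. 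The one place you genuinely deviate is (ii): you prove resolvent positivity by testing with $u^-$ and then pass to the semigroup via the exponential formula $S^\theta(t)=\lim_n(I-\frac{t}{n}\Lap_\theta)^{-n}$ of \cite{pazy}, whereas the paper uses the first Beurling--Deny criterion ($u\in H^1_\theta(\Omega)\Rightarrow\abs{u}\in H^1_\theta(\Omega)$ with $Q(\abs{u})\leq Q(u)$); both are standard, yours being more self-contained at the price of redoing what the form criterion gives for free. For (iv) the paper uses precisely your factorization $S^\theta(t)=S^\theta(t/2)S^\theta(t/2)\colon L^1(\Omega)\to L^2(\Omega)\to L^\infty(\Omega)$ followed by a Dunford--Pettis type kernel theorem (\cite{arendt}, Theorem 4.16). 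In fact you are more careful than the paper on one point: strict positivity of $k^\theta$ does require an irreducibility/strong maximum principle argument, as you say; order preservation alone only yields $k^\theta\geq 0$.

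The genuine gap is (v). Writing that you would ``obtain analyticity from the general theory of analytic semigroups generated by such elliptic operators on spaces of continuous functions'' defers exactly the point that has to be supplied: nothing in the $L^2$ or $L^p$ theory produces the sup-norm sectorial resolvent estimate $\norm{(\lambda-\Lap_\theta)^{-1}f}_{L^\infty(\Omega)}\leq C\norm{f}_{L^\infty(\Omega)}/\abs{\lambda}$ on a sector, which is the actual content of generation on $BUC_\theta(\Omega)$, and it is delicate here because $\Omega$ is unbounded and the boundary condition is of mixed $\theta$ type. Your approximate-identity plan for the $C^0$ property has the same problem: it needs uniform (Gaussian) control of $k^\theta$ near $\partial\Omega$, and at this stage of the paper such bounds are not available --- the estimate (\ref{eqn:gyryabound}) is only obtained later, in Corollary \ref{cor:Gaussian_bound}, via comparison results that postdate this theorem. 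The paper closes (v) by citing a specific generation theorem, Theorem 2.4 of \cite{mora1983semilinear}; to complete your proof you must either invoke such a result (Mora, or Stewart-type sup-norm resolvent estimates for elliptic boundary value problems) or prove the sectorial estimate in $BUC_\theta(\Omega)$ directly. Flagging this as ``the genuine obstacle'', as you do, identifies the difficulty correctly but does not discharge it.
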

        \begin{proof}
For simplicity in the proof we will not write the superscript
$\theta$. 

        \noindent (i)-(ii)-(iii). 
Observe that from Theorem \ref{thm:L2re} the  the quadratic form
associated to $-\Lap_{\theta}$ is given by 
\begin{displaymath}
 Q(u)= \int_\Omega \abs{\nabla u}^2 + \int_{\partial \Omega^{R}}
 \cot(\frac{\pi}{2}\theta)u^2, \qquad u \in H^{1}_{\theta}(\Omega) . 
\end{displaymath}
\begin{reserva}
  Es cerrada en el sentido de Davies, pagina 7, y es la FQ asociada
  $-\Delta_{\theta}$ ya que la FB de Lax Milgram y $-\Delta_{\theta}$
  verifican
  \begin{displaymath}
    a(u,v) = < -\Delta_{\theta} u, v>_{L^{2}} 
  \end{displaymath}
  y $D(\Delta_{\theta})$ es denso en $H^{1}_{\theta}$. 
\end{reserva}
\begin{reserva2}
  Now, using \cite{davies1980} Theorem 4.14, we obtain that $Q$ is
  closable and its closure is the form of a positive self-adjoint
  operator which extends $-\Lap$. Let us identify which is the closure
  of $Q$. To do this, we consider its associated norm
  $\norm{\cdot}_{Q}\defeq \norm{\cdot}_{L^2(\Omega)}+Q(\cdot)$. Using
  the trace inequality we have the equivalence of norms
  $\norm{\cdot}_{Q}\cong \norm{\cdot}_{H^1(\Omega)}$. Now, note that
  the closure of $D_\theta(\Lap)$ in $L^2(\Omega)$ with respect to the
  norm $H^1(\Omega)$ is $H^1_\theta(\Omega)$. Hence, we can extend $Q$
  to $H^1_\theta(\Omega)\times H^1_\theta(\Omega)$ so that we obtain
  the closure of $Q$ (and we will denote it the same way).

  Now we want to apply \cite{davies1989heat} Lemma 1.3.4 to $-\Lap$
  and $Q$. If we take $f\in H^1_\theta(\Omega)$, then
  $\abs{f}\in H^1_\theta(\Omega)$ and
  $\norm{\abs{f}}_{H^1(\Omega)}\leq \norm{f}_{H^1(\Omega)}$ (See for
  example \cite{davies1989heat} Lemma 1.2.9). Therefore,
  \begin{displaymath}
    Q(\abs{f})=\int_\Omega \abs{\nabla\abs{f}}^2 + \int_{\partial
      \Omega^{R}} \cot(\frac{\pi}{2}\theta)\abs{f}^2 \leq Q(f). 
  \end{displaymath}
  In the same way, one can prove that
  $\max(0,\min(f,1))\in H^1_\theta(\Omega)$ and
  $Q(\max(0,\min(f,1)))\leq Q(f)$ (See \cite{davies1989heat} Theorem
  1.3.5 and Theorem 1.3.9). also Lemma 1.3.4
\end{reserva2}

Since for $u\in H^{1}_{\theta}(\Omega)$ we have $|u|\in
H^{1}_{\theta}(\Omega)$, then from Theorem 1.3.2 in
\cite{davies1989heat} we have that semigroup is order preserving in
$L^{2}(\Omega)$.

Also, if  $0\leq u\in H^{1}_{\theta}(\Omega)$ we have  $v=\max\{u, 1\}
  \in H^{1}_{\theta}(\Omega)$  and 
\begin{displaymath}
  Q(v) = \int_\Omega \abs{\nabla v}^2 + \int_{\partial
      \Omega^{R}} \cot(\frac{\pi}{2}\theta)\abs{v}^2 \leq Q(u). 
\end{displaymath}
Therefore, Theorem 1.3.3 in \cite{davies1989heat} implies that we have
an order preserving  semigroup of contractions in $L^{p}(\Omega)$,
$1\leq p\leq \infty$. Additionally, by Theorem 1.4.1 in
\cite{davies1989heat} these semigroups are  
consistent in the sense that they coincide on  $L^p(\Omega)\cap
L^q(\Omega)$ for any $p,q$ and satisfy the duality property  (\ref{eqn:Sexchange}). Finally,
the analyticity dor  $1<p<\infty$ follows from Theorem 1.4.2 in 
\cite{davies1989heat}.

\medskip 
\noindent (iv)
As in Theorem \ref{thm:semigrupo_L2}, since $S(t)$ is an analytic semigroup in $L^2(\Omega)$, for any
$t>0$ and $k\in \mathbb{N}$, if $\partial \Omega$ and $\theta$ are sufficiently regular, $S(t): L^2(\Omega) \to
D((-\Lap_\theta)^{k}) \subset H^{2k}(\Omega)$ continuously. 

Thus, taking $k$ large,
\begin{reserva}
  esto requiere menos regularidad que en el Teorema
  \ref{thm:semigrupo_L2}: $2k-\frac{N}{2}>0$, es decir $k> \frac{N}{4}$ 
\end{reserva}
we obtain that,  for any
$t>0$, $S(t): L^2(\Omega) \to   L^\infty(\Omega)$ is
continuous and by  duality we also have that $S(t): L^1(\Omega)
\to L^2(\Omega)$ is continuous. Therefore, using  $S(t)=S(t/2)S(t/2)$
we have that $S(t): L^1(\Omega) \to L^\infty(\Omega)$ is also
continuous. Hence, we can  
use \cite{arendt} Theorem 4.16,  to prove the existence of the
kernel,  $k^\theta(x,y,t)$, which is  positive because $S(t)$ is order
preserving.

In particular for  $f, g \in C^\infty_c(\Omega)$,
(\ref{eqn:Sexchange}) implies 
	\begin{displaymath}
		\int_{\Omega\times\Omega}
                k^\theta(x,y,t) f(x) g(y)dxdy=
                \int_{\Omega\times\Omega} k^\theta(y,x,t) f(x) g(y)dydx
	\end{displaymath}
and therefore $k^\theta(x,y,t)=k^\theta(y,x,t)$.

\medskip 
\noindent (v)
This follows from  Theorem 2.4 in
\cite{mora1983semilinear}.
\end{proof}

In addition, the positivity of the semigroup gives us a useful
consequence. 
\begin{corollary}
	\label{cor:absvalue}

 With the notations above, for  any  $u_0\in L^p(\Omega)$, $1\leq
 p\leq \infty$, 
 \begin{displaymath}
\abs{S^\theta(t)u_0(x)}\leq S^\theta(t)\abs{u_0}(x), \qquad x\in
     \Omega, \ t>0 . 
   \end{displaymath}
 
\end{corollary}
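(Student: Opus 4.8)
The plan is to read this off directly from the order-preserving property of the semigroup already recorded in Theorem \ref{thm:propiedades_sg_theta}(ii), combined with its linearity. The key intermediate observation is that positivity together with linearity upgrades to \emph{monotonicity}: if $v_0,w_0\in L^p(\Omega)$ satisfy $v_0\leq w_0$ a.e. in $\Omega$, then $w_0-v_0\geq 0$, so $S^\theta(t)(w_0-v_0)\geq 0$ by (ii), and hence $S^\theta(t)v_0\leq S^\theta(t)w_0$ a.e. Since order preservation is available for every $1\leq p\leq\infty$, this monotonicity holds on the full range of $p$ considered in the statement, with no loss.

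First I would treat real-valued $u_0\in L^p(\Omega)$. Here $\abs{u_0}\in L^p(\Omega)$ (indeed $\norm{\abs{u_0}}_{L^p(\Omega)}=\norm{u_0}_{L^p(\Omega)}$), and one has the pointwise bounds $-\abs{u_0}\leq u_0\leq\abs{u_0}$ a.e. Applying the monotonicity above to both inequalities and using $S^\theta(t)(-\abs{u_0})=-S^\theta(t)\abs{u_0}$ by linearity yields
\begin{displaymath}
  -S^\theta(t)\abs{u_0}(x)\leq S^\theta(t)u_0(x)\leq S^\theta(t)\abs{u_0}(x),\qquad x\in\Omega,\ t>0,
\end{displaymath}
which is precisely $\abs{S^\theta(t)u_0(x)}\leq S^\theta(t)\abs{u_0}(x)$.

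If complex-valued data are to be included, I would handle it by a phase rotation. For fixed $x\in\Omega$ and $t>0$, write $S^\theta(t)u_0(x)=\rho\,e^{i\alpha}$ with $\rho\geq 0$ and $\alpha\in\R$ depending on $(x,t)$; then by linearity $\abs{S^\theta(t)u_0(x)}=\rho=e^{-i\alpha}S^\theta(t)u_0(x)=S^\theta(t)\bigl(e^{-i\alpha}u_0\bigr)(x)$. As this quantity is real, it equals its real part $S^\theta(t)\bigl(\mathrm{Re}(e^{-i\alpha}u_0)\bigr)(x)$, and since $\mathrm{Re}(e^{-i\alpha}u_0)\leq\abs{e^{-i\alpha}u_0}=\abs{u_0}$ pointwise, monotonicity gives the claim. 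Alternatively, whenever $\partial\Omega$ and $\theta$ are regular enough for the kernel in Theorem \ref{thm:propiedades_sg_theta}(iv) to exist, the estimate is immediate from $\abs{\int_\Omega k^\theta(x,y,t)u_0(y)\,dy}\leq\int_\Omega k^\theta(x,y,t)\abs{u_0(y)}\,dy$ using positivity of $k^\theta$ and the triangle inequality.

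I do not expect a genuine obstacle here: the entire content of the corollary is already encoded in the positivity statement (ii), and the remaining work is the elementary monotonicity-and-linearity manipulation above. The only point requiring a little care is the complex-valued case, where the naive splitting into real and imaginary parts does not suffice and one must instead use the pointwise phase trick (or the kernel representation) as indicated; for real data, which is the setting relevant to the rest of the paper, the first two paragraphs give a complete argument.
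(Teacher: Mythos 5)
Your proof is correct and takes essentially the same route as the paper's: both arguments rest entirely on the linearity and order-preservation of $S^\theta(t)$ established in Theorem \ref{thm:propiedades_sg_theta}. The paper splits $u_0=u_0^+-u_0^-$ and uses the pointwise triangle inequality plus positivity, while you sandwich $-\abs{u_0}\leq u_0\leq \abs{u_0}$ and apply the resulting monotonicity; these are interchangeable one-line manipulations of the same key property (your remarks on the complex-valued case are extra, as the paper works with real data).
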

\begin{proof}
Splitting $u_0=u_0^+-u_0^-$ and using the linearity and positivity of the semigroup:
\begin{displaymath}
\begin{aligned}
\abs{S^\theta(t)u_0(x)} & =
\abs{S^\theta(t)u_0^+(x)-S^\theta(t)u_0^-(x)} \leq
\abs{S^\theta(t)u_0^+(x)} + \abs{S^\theta(t)u_0^-(x)} \\
			& = S^\theta(t) u_0^+(x) + S^\theta(t)u_0^-(x)
                        = S^\theta(t)\abs{u_0}(x) . 
\end{aligned}
\end{displaymath}
\end{proof}

\subsection{Comparison principles with general boundary conditions}
\label{sec:comparison_theta}

Now we prove some monotonicity results for the solutions of the elliptic
and parabolic problems above as well as monotonicity with respect to
the function $\theta$.

\begin{theorem}
\label{thm:neugeqdirell}
Let $\Omega\subset\RN$ be a domain with compact boundary and
let  $f_{1}, f_{2} \in L^{2}(\Omega)$ and
$g_{1}, g_{2} \in L^{2}(\partial\Omega)$. 
 Let  $u_1,
u_2 \in H^1_\theta(\Omega)$ be two  weak 
solutions of: 
\begin{displaymath}
\label{eqn:heatgen2ell}
\left\{
\begin{aligned}
	-\Lap u_i +\gamma u_{i}= f_i \quad & in \ \Omega \\
	B_\theta(u_i)=g_i \quad & in \ \partial\Omega   , 
\end{aligned}	
\right. 
\end{displaymath}
where $i=1,2$, in the sense that 
  $u_{i}= g_{i}$ on $\partial^{D}\Omega$ and 
for any $\varphi\in   H^1_\theta(\Omega)$,
        \begin{displaymath}
          \int_\Omega \nabla u_{i} \nabla \varphi + \gamma
          \int_{\Omega}  u_{i} \varphi + 
 \int_{\partial^R \Omega}   \cot(\frac{\pi}{2}\theta) u_{i} \varphi  =
 \int_{\partial^R \Omega\cup
  \partial^N\Omega}  \frac{g_{i}}{\sin(\frac{\pi}{2}\theta)}   \varphi
+ \int_\Omega f_{i} \varphi . 
        \end{displaymath}
        
        Then, if $f_1\geq f_2$, $g_1\geq g_2$ and $\gamma >0$ or
        $\gamma=0$ but $\theta\not\equiv
        1$, that is, except for Neumann boundary conditions, we have 
	\begin{displaymath}
		u_1\geq u_2 \qquad x\in \Omega . 
	\end{displaymath}
\end{theorem}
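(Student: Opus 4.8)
The plan is to run an energy (Stampacchia-type) argument on the difference. Set $w \defeq u_1 - u_2 \in H^1_\theta(\Omega)$; subtracting the two weak formulations, $w$ is a weak solution of $-\Lap w + \gamma w = f_1 - f_2 \geq 0$ with boundary datum $g_1 - g_2 \geq 0$, i.e. for every $\varphi \in H^1_\theta(\Omega)$,
\[
\int_\Omega \nabla w \nabla \varphi + \gamma \int_\Omega w \varphi + \int_{\partial^R\Omega} \cot(\tfrac{\pi}{2}\theta)\, w \varphi = \int_{\partial^R\Omega\cup\partial^N\Omega} \frac{g_1-g_2}{\sin(\tfrac{\pi}{2}\theta)}\,\varphi + \int_\Omega (f_1-f_2)\,\varphi .
\]
The claim $u_1 \geq u_2$ is equivalent to $w^- \equiv 0$, where $w^- \defeq \max\{-w,0\} = (u_2-u_1)^+ \geq 0$. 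The admissible test function is $\varphi = w^-$: it lies in $H^1_\theta(\Omega)$ because $H^1(\Omega)$ is stable under taking negative parts and the trace of $w^-$ on $\partial^D\Omega$ vanishes (as does that of $w$), and it is nonnegative. Traces make sense throughout since $\partial\Omega$ is compact and smooth.

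Next I would insert $\varphi = w^-$ and use the pointwise identities $\nabla w \cdot \nabla w^- = -\abs{\nabla w^-}^2$ and $w\, w^- = -(w^-)^2$, the latter also holding on $\partial\Omega$ in the trace sense. This turns the left-hand side into
\[
-\int_\Omega \abs{\nabla w^-}^2 - \gamma \int_\Omega (w^-)^2 - \int_{\partial^R\Omega}\cot(\tfrac{\pi}{2}\theta)\,(w^-)^2 ,
\]
which is $\leq 0$ because $\cot(\tfrac{\pi}{2}\theta) > 0$ on $\partial^R\Omega$. On the other hand, since $g_1 - g_2 \geq 0$, $f_1 - f_2 \geq 0$, $\sin(\tfrac{\pi}{2}\theta)>0$ on $\partial^R\Omega\cup\partial^N\Omega$ and $w^- \geq 0$, the right-hand side is $\geq 0$. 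Hence both sides vanish, and in particular
\[
\int_\Omega \abs{\nabla w^-}^2 + \gamma \int_\Omega (w^-)^2 + \int_{\partial^R\Omega}\cot(\tfrac{\pi}{2}\theta)\,(w^-)^2 = 0 ,
\]
so each nonnegative summand is zero. In particular $\nabla w^- = 0$ a.e., and since $\Omega$ is connected, $w^- \equiv c$ is a constant $\geq 0$.

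It then remains to rule out $c > 0$. If $\gamma > 0$, the middle term $\gamma\int_\Omega (w^-)^2 = 0$ forces $c = 0$ at once. If $\gamma = 0$ and $\theta \not\equiv 1$, the set $\{\theta < 1\} = \partial^D\Omega \cup \partial^R\Omega$ is nonempty and open in $\partial\Omega$, hence of positive surface measure; if it meets $\partial^D\Omega$ the trace condition $\restr{w^-}{\partial^D\Omega}=0$ gives $c=0$, while if it meets $\partial^R\Omega$ then $c^2\int_{\partial^R\Omega}\cot(\tfrac{\pi}{2}\theta) = 0$ with a strictly positive integrand forces $c = 0$; moreover, when $\Omega$ is unbounded, $c=0$ is already immediate from $w^- \in L^2(\Omega)$. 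In every admissible case $w^- \equiv 0$, that is $u_1 \geq u_2$.

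The main obstacle is exactly this last step when $\gamma = 0$: coercivity of the bilinear form in $L^2(\Omega)$ is lost, so the energy identity only yields that $w^-$ is constant, and one must exploit the dissipative boundary condition $\theta \not\equiv 1$ to discard a nonzero constant. This is precisely where the pure Neumann case $\theta\equiv 1$ is genuinely excluded, consistently with the fact that constants then solve the homogeneous problem and no comparison can hold.
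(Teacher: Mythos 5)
Your proof is correct and takes essentially the same approach as the paper: both arguments test the weak formulation of the difference with the positive part of $u_2-u_1$ (your $w^-$), arrive at the same energy identity
$\int_\Omega \abs{\nabla w^-}^2+\gamma\int_\Omega (w^-)^2+\int_{\partial^R\Omega}\cot(\tfrac{\pi}{2}\theta)(w^-)^2=0$,
and then split into the cases $\gamma>0$ and $\gamma=0$, $\theta\not\equiv 1$. If anything, your endgame for $\gamma=0$ is slightly more careful than the paper's, which infers $v^+\equiv 0$ from the boundary integral alone and leaves implicit the pure Dirichlet case $\partial^R\Omega=\emptyset$, where one must instead invoke the vanishing trace on $\partial^D\Omega$ as you do.
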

\begin{proof}
	We take $v=u_2-u_1$. It satisfies:
        \begin{displaymath}
    \int_\Omega \nabla v \nabla \varphi + \gamma
          \int_{\Omega}  v \varphi + 
 \int_{\partial^R \Omega}   \cot(\frac{\pi}{2}\theta) v \varphi  =
 \int_{\partial^R \Omega\cup
  \partial^N\Omega}  \frac{g }{\sin(\frac{\pi}{2}\theta)}   \varphi
+ \int_\Omega f \varphi .         
\end{displaymath}
with $f=f_2-f_1\leq 0$ and $g=g_2-g_1\leq 0$. Then we take
$0\leq \varphi= v^{+} =\max\{v,0\} \in
H^1_\theta(\Omega)$ to get 
	\begin{displaymath}
		\label{eqn:heatgen2elleq3}
		\int_\Omega\abs{\nabla v^+}^2 + \gamma
                \int_\Omega\abs{v^+}^2+ \int_{\partial^R
                  \Omega}\cot(\frac{\pi}{2}\theta)|v^+|^2 \leq 0 .  
	\end{displaymath}

        If $\gamma >0$ we get $ \int_\Omega\abs{v^+}^2=0$ and then
        $v\leq 0$ in $\Omega$ as claimed. 
  On the other hand,  if  $\gamma =0$ but $\theta \not\equiv 1$, we obtain
	\begin{displaymath}
		\int_\Omega\abs{\nabla v^+}^2 = 0, \qquad 	\int_{\partial^R \Omega}\cot(\frac{\pi}{2}\theta)|v^+|^2=0.
	\end{displaymath}
	Hence,    from the first term above, $v^+$ is constant in
        $\Omega$ and from the second we get $v^+\equiv 0$. Whence
        $v\leq 0$ again. 
\end{proof}
We will also be able to compare solutions with different
types of boundary conditions. The following theorem, as well as its parabolic version (Theorem \ref{thm:neugeqdir2}, presented later), justifies the
parametrization used for the boundary conditions $B_\theta$ in
(\ref{eqn:thetabc}).

\begin{theorem}
\label{thm:neugeqdir2ell}
Let $\Omega\subset\RN$ be a domain with compact boundary   $\partial
\Omega$  of class $C^2$ and $\theta\in C^1(\partial \Omega)$ and let
$0\leq f \in L^{2}(\Omega)$.  Let  $u_i \in D(\Delta_{\theta{i}} )$, $i=1,2$,  be solutions of 
\begin{displaymath}
\label{eqn:heatgen2ell2}
\left\{
\begin{aligned}
-\Lap u_{i} +\gamma u_{i} = f \quad & in \ \Omega  \\
B_{\theta_{i}} (u_i)=0 \quad & on \ \partial\Omega 
\end{aligned}	
\right. 
\end{displaymath}

\begin{reserva}
  in the sense that $u_{i}= 0$ on $\partial_{i}^{D}\Omega$ and for any
  $\varphi\in H^1_{\theta_{i}} (\Omega)$,
  \begin{displaymath}
    \int_\Omega \nabla u_{i} \nabla \varphi + \gamma
    \int_{\Omega}  u_{i} \varphi + 
    \int_{\partial^R_i \Omega}   \cot(\frac{\pi}{2}\theta_{i}) u_{i} \varphi  =
        \int_\Omega f  \varphi . 
  \end{displaymath}
\end{reserva}
with $0\leq \theta_{i}\leq 1$, $\gamma >0$ or $\gamma =0$ but
$\theta_{i} \not \equiv 1$, that is, 
except for Neumann boundary conditions.

Then, 
\begin{displaymath}
  \theta_1\leq \theta_2 \quad \mbox{implies}\quad
  u_1\leq u_2.
\end{displaymath}
\end{theorem}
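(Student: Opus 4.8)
The plan is to reduce the statement to a single energy estimate with test function $w^+ = (u_1 - u_2)^+$, exploiting that increasing $\theta$ makes the Robin coefficient $\cot(\frac{\pi}{2}\theta)$ decrease. First I would record two facts. Both solutions are nonnegative: applying Theorem \ref{thm:neugeqdirell} to the pair $(u_i, 0)$ with the \emph{same} boundary condition $\theta_i$, taking $f_1 = f \geq 0 = f_2$ and $g_1 = g_2 = 0$, yields $u_i \geq 0$ for $i=1,2$. And each $u_i$ satisfies a weak formulation: since $u_i \in D(\Delta_{\theta_i}) \subset H^2(\Omega)$, multiplying $-\Lap u_i + \gamma u_i = f$ by $\varphi \in H^1_{\theta_i}(\Omega)$ and integrating by parts, using $B_{\theta_i}(u_i)=0$ (so $\frac{\partial u_i}{\partial n} = -\cot(\frac{\pi}{2}\theta_i)u_i$ on $\partial^R_i\Omega\cup\partial^N_i\Omega$, with $\cot(\frac{\pi}{2}\theta_i)=0$ on $\partial^N_i\Omega$ and $\varphi=0$ on $\partial^D_i\Omega$), gives
\begin{displaymath}
\int_\Omega \nabla u_i\,\nabla\varphi + \gamma\int_\Omega u_i\varphi + \int_{\partial^R_i\Omega}\cot(\tfrac{\pi}{2}\theta_i)\,u_i\varphi = \int_\Omega f\varphi .
\end{displaymath}

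The key preparatory step is to verify that $w^+$ is admissible in \emph{both} formulations, i.e. $w^+ \in H^1_{\theta_1}(\Omega)\cap H^1_{\theta_2}(\Omega)$. On $\partial^D_1\Omega$ we have $\theta_1=0$, hence $u_1=0$ while $u_2\geq 0$, so $w\le 0$ and $w^+=0$ there; on $\partial^D_2\Omega$ we have $\theta_2=0$, which forces $\theta_1=0$ (as $\theta_1\le\theta_2$), hence $u_1=u_2=0$ and again $w^+=0$. Thus the trace of $w^+$ vanishes on both Dirichlet parts, and since $w^+\in H^1(\Omega)$ it lies in both spaces. Plugging $\varphi=w^+$ into both weak formulations, subtracting, and using $\nabla w\cdot\nabla w^+ = |\nabla w^+|^2$ and $w\,w^+=|w^+|^2$, I obtain
\begin{displaymath}
\int_\Omega|\nabla w^+|^2 + \gamma\int_\Omega|w^+|^2 = \int_{\partial^R_2\Omega}\cot(\tfrac{\pi}{2}\theta_2)\,u_2 w^+ - \int_{\partial^R_1\Omega}\cot(\tfrac{\pi}{2}\theta_1)\,u_1 w^+ =: R .
\end{displaymath}

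The heart of the argument is showing $R\le 0$ by a pointwise analysis on the boundary support $\Sigma = \{w^+>0\}\cap\partial\Omega$. On $\Sigma$ we have $u_1>u_2\geq 0$, and since $w^+=0$ on $\partial^D_1\Omega$ we also have $\theta_1>0$, hence $\theta_2\geq\theta_1>0$; thus both coefficients $c_i := \cot(\frac{\pi}{2}\theta_i)$ are finite and nonnegative on $\Sigma$ (and vanish on the respective Neumann parts, so both integrals may be taken over $\Sigma$). As $\theta\mapsto\cot(\frac{\pi}{2}\theta)$ is decreasing on $(0,1]$ and $\theta_1\le\theta_2$, we get $c_1\geq c_2\geq 0$. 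Writing $u_1 = u_2 + w^+$ on $\Sigma$,
\begin{displaymath}
c_2 u_2 - c_1 u_1 = (c_2-c_1)u_2 - c_1 w^+ \le 0 ,
\end{displaymath}
since $(c_2-c_1)\le 0$, $u_2\geq 0$, and $c_1 w^+\geq 0$; hence $R = \int_\Sigma (c_2 u_2 - c_1 u_1)w^+ \le 0$.

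To conclude: if $\gamma>0$ the identity forces $\int_\Omega|w^+|^2=0$, so $w^+\equiv 0$ and $u_1\le u_2$. If $\gamma=0$ (with $\theta_i\not\equiv 1$), then $\int_\Omega|\nabla w^+|^2=0$ and $R=0$, so $w^+$ is constant on the connected set $\Omega$; if $w^+\equiv c>0$, then since $\{\theta_1<1\}$ has positive measure, on its Dirichlet portion $w^+=0$ contradicts $c>0$, while on its Robin portion $c_1>0$ gives integrand $\le -c_1 c<0$, whence $R<0$, a contradiction, so $c=0$. I expect the main obstacle to be the bookkeeping around the mismatch between $H^1_{\theta_1}(\Omega)$ and $H^1_{\theta_2}(\Omega)$ and the differing Dirichlet/Robin/Neumann decompositions of $\partial\Omega$ — justifying that $w^+$ lies in both spaces and that the two boundary integrals compare on the common set $\Sigma$ — together with ruling out a nonzero constant $w^+$ in the degenerate case $\gamma=0$.
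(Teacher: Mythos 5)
Your proof is correct, but it takes a genuinely different route from the paper's. The paper works at the pointwise level: invoking the $H^2$ regularity of Theorem \ref{thm:L2re} so that the boundary conditions hold pointwise, it shows that $u_2$ is a supersolution of the $\theta_1$-problem --- since $u_2\geq 0$ forces $\frac{\partial u_2}{\partial n}\leq 0$ on $\partial\Omega$, the monotonicity of $\sin(\frac{\pi}{2}\cdot)$ and $\cos(\frac{\pi}{2}\cdot)$ gives $B_{\theta_1}(u_2)\geq B_{\theta_2}(u_2)=0$ --- and then applies the comparison Theorem \ref{thm:neugeqdirell} (with nonhomogeneous boundary data) to $v=u_2-u_1$. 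You instead rerun the energy argument underlying Theorem \ref{thm:neugeqdirell} directly on the pair $(u_1,u_2)$, testing both weak formulations with $(u_1-u_2)^+$ and exploiting the monotonicity of $\cot(\frac{\pi}{2}\cdot)$ on the boundary set $\Sigma$ where the trace of $(u_1-u_2)^+$ is positive. The paper's route buys brevity, since all the work is delegated to an existing theorem, at the cost of needing pointwise normal derivatives (hence the $C^2$ boundary and $C^1$ coefficient hypotheses) and of implicitly recasting $u_2$ as a weak solution of the $\theta_1$-problem with boundary datum $B_{\theta_1}(u_2)$. Your route is self-contained at the variational level: it never uses the sign of $\frac{\partial u_2}{\partial n}$ nor the pointwise boundary conditions, only traces, the nonnegativity of $u_2$, and the weak formulations, so in principle it works under weaker regularity; the price is the bookkeeping you yourself flagged --- verifying $(u_1-u_2)^+\in H^1_{\theta_1}(\Omega)\cap H^1_{\theta_2}(\Omega)$ via the inclusion $\partial^{D}_{2}\Omega\subset\partial^{D}_{1}\Omega$, reducing both Robin integrals to $\Sigma$ (using that $\cot(\frac{\pi}{2}\theta_i)$ vanishes on the Neumann parts), and the extra case analysis ruling out a nonzero constant when $\gamma=0$ --- all of which you carry out correctly, the splitting of $\{\theta_1<1\}$ into its Dirichlet and Robin portions being exactly what is needed in the degenerate case.
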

\begin{proof}
  From Theorem \ref{thm:neugeqdirell}, $u_{i}\geq 0$ and from Theorem
  \ref{thm:L2re} they
  satisfy the boundary conditions pointwise in $\partial \Omega$. 
\begin{reserva}
HERE continuity of $\theta$ is enough. 

  Note that, as $\theta_1\leq \theta_2$,
  $\partial^D_2\subset \partial^D_1$ and
  $\partial^R_1\cup \partial^N_1\subset \partial^R_2\cup
  \partial^N_2$.  Take $v= u_{1}- u_{2}\in H^1_\theta(\Omega)$ which
  satisfies, for every $\varphi \in H^1_\theta(\Omega)$,
    \begin{displaymath}
      \int_\Omega \nabla v \nabla \varphi + \gamma
      \int_{\Omega}  v \varphi + 
      \int_{\partial^R_2 \Omega}   \cot(\frac{\pi}{2}\theta_{2}) v  \varphi  = \int_{\partial^R_1 \Omega}  \Big( \cot(\frac{\pi}{2}\theta_{2})-\cot(\frac{\pi}{2}\theta_{1}) \Big) u_1  \varphi  +\int_{\partial^R_2 \Omega\backslash \partial^R_1 \Omega}  \cot(\frac{\pi}{2}\theta_{2}) u_1  \varphi 
         \end{displaymath}
Note that $\partial^R_2 \Omega\backslash \partial^R_1 \Omega\subset \partial^D_1$, so $u_1\equiv0$ in $\partial^R_2 \Omega\backslash \partial^R_1$ and the last integral term is zero. In addition, as $u_1\leq 0$ and $\theta_1\leq \theta_2$, we have that $\Big( \cot(\frac{\pi}{2}\theta_{2})-\cot(\frac{\pi}{2}\theta_{1}) \Big) u_1\leq 0$. Hence, we can apply Theorem \ref{thm:neugeqdirell} and $v\leq 0$ as we wanted to prove.
  \end{reserva}
  
Now observe that 
 $\sin(\frac{\pi}{2}\theta_1) \leq \sin(\frac{\pi}{2}\theta_2)$,
$\cos(\frac{\pi}{2}\theta_1) \geq \cos(\frac{\pi}{2}\theta_2)$ and
since $u_{2} \geq 0$ then  $\frac{\partial u_2}{\partial n}\leq 0$ on
$\partial \Omega$. This is clear in the Dirichlet and Neumann parts of
the boundary and in the Robin one is because
$0<\theta_{2}<1$. Therefore, 
\begin{displaymath}
B_{\theta_1}(u_2)=\sin(\frac{\pi}{2}\theta_1)\frac{\partial
  u_2}{\partial n}+\cos(\frac{\pi}{2}\theta_1)u_2\geq
\sin(\frac{\pi}{2}\theta_2)\frac{\partial u_2}{\partial
  n}+\cos(\frac{\pi}{2}\theta_2)u_2 = B_{\theta_{2}}(u_{2}) =0 . 
\end{displaymath}

Thus $v=u_{2}-u_{1}$ satisfies $-\Delta v + \gamma v =0$ in $\Omega$
and $B_{\theta_{1}}(v) \geq 0$ in $\partial \Omega$ and then 
 Theorem \ref{thm:neugeqdirell} implies $v\geq 0$. 
\end{proof}

For the parabolic problems, we have the following results. 

\begin{theorem}
\label{thm:neugeqdir}
Let $\Omega\subset\RN$ be a domain with compact boundary and
let ${u_1}_0, {u_2}_0\in L^{2}(\Omega)$, $f_{1}, f_{2} \in L^{1}((0,T), L^{2}(\Omega))$ and
$g_{1}, g_{2} \in L^{1}((0,T), L^{2}(\partial\Omega))$
with $T>0$.  
Finally, assume  $u_1,u_2\in C^1((0,T), H^1_\theta(\Omega))\cap
C([0,T], L^2(\Omega))$ are such that they are weak solutions of the
problems 
\begin{displaymath}
	\label{eqn:heatgen2}
	\left\{
	\begin{aligned}
		\frac{\partial}{\partial t}u_i-\Lap u_{i} = f_i \quad & in \ \Omega\times(0,T) \\
		B_\theta(u_i)=g_i \quad & on \
                \partial\Omega\times (0,T) \\
		u_{i}={u_{i,0}} := u_{i}(0) \quad & in \
                \Omega\times\{0\} , 
	\end{aligned}	
	\right. 
\end{displaymath}
for $i=1,2$,  in the sense that 
  $u_{i}= g_{i}$ on $\partial^{D}\Omega \times (0,T)$ and 
for any $\varphi\in   C([0,T],  H^1_\theta(\Omega))$, 
	\begin{displaymath}
 \int_\Omega (u_{i})_t\varphi  + \int_\Omega \nabla u_{i} \nabla \varphi +
 \int_{\partial^R \Omega}   \cot(\frac{\pi}{2}\theta) u_{i} \varphi  =
 \int_{\partial^R \Omega\cup
  \partial^N\Omega}  \frac{g_{i}}{\sin(\frac{\pi}{2}\theta)}   \varphi + \int_\Omega f_{i} \varphi  \qquad t
 \in (0,T). 
	\end{displaymath}

        Then, if $f_1\geq f_2$, $g_1\geq g_2$ and ${u_{1,0}}\geq
        {u_{2,0}}$, we have 
	\begin{displaymath}
		u_1\geq u_2  \qquad x\in \Omega, \ t \in (0,T) . 
	\end{displaymath}
\end{theorem}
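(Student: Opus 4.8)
The plan is to follow the energy method used in the elliptic comparison Theorem~\ref{thm:neugeqdirell}, with the coercive zeroth-order term there replaced by a time-derivative contribution coming from the evolution. First I would set $v=u_2-u_1$. By linearity of the two weak formulations, $v$ is itself a weak solution of the same parabolic problem with data $f=f_2-f_1\le 0$, $g=g_2-g_1\le 0$ and initial value $v(0)=u_{2,0}-u_{1,0}\le 0$, and it satisfies $v=g\le 0$ on $\partial^D\Omega\times(0,T)$. In particular, for every $t$ the truncation $v^+(t)=\max\{v(t),0\}$ belongs to $H^1_\theta(\Omega)$, since its trace on $\partial^D\Omega$ vanishes, so it is an admissible test function.

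Next I would test the weak equation for $v$ with $\varphi=v^+(t)$. Using $\nabla v\cdot\nabla v^+=|\nabla v^+|^2$ and $v\,v^+=|v^+|^2$ this yields, for a.e.\ $t\in(0,T)$,
\begin{displaymath}
  \int_\Omega v_t\,v^+ + \int_\Omega |\nabla v^+|^2 + \int_{\partial^R\Omega}\cot(\tfrac{\pi}{2}\theta)|v^+|^2 = \int_{\partial^R\Omega\cup\partial^N\Omega}\frac{g}{\sin(\tfrac{\pi}{2}\theta)}\,v^+ + \int_\Omega f\,v^+ .
\end{displaymath}
The right-hand side is $\le 0$, because $f\le 0$, $g\le 0$, $\sin(\tfrac{\pi}{2}\theta)>0$ on $\partial^R\Omega\cup\partial^N\Omega$ and $v^+\ge 0$; the last two terms on the left are $\ge 0$, recalling that $\cot(\tfrac{\pi}{2}\theta)>0$ on $\partial^R\Omega$. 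Hence $\int_\Omega v_t\,v^+\le 0$ for a.e.\ $t$. Note that, unlike the elliptic case, no restriction on $\theta$ (and in particular no exclusion of Neumann conditions) is needed here, since the required control comes from the initial datum.

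The key remaining step is to recognise the energy identity $\int_\Omega v_t\,v^+=\tfrac12\frac{d}{dt}\|v^+\|_{L^2(\Omega)}^2$. With $G(s)=\tfrac12(s^+)^2$, so that $G$ is convex and $C^{1,1}$ with $G'(s)=s^+$, and using $v\in C^1((0,T),H^1_\theta(\Omega))\cap C([0,T],L^2(\Omega))$, the map $t\mapsto\int_\Omega G(v(t))=\tfrac12\|v^+(t)\|_{L^2}^2$ is absolutely continuous and its derivative equals $\int_\Omega G'(v)v_t=\int_\Omega v^+v_t$. Combining with the above gives $\frac{d}{dt}\|v^+(t)\|_{L^2}^2\le 0$, so this quantity is nonincreasing on $(0,T)$; since $v(0)\le 0$ gives $\|v^+(0)\|_{L^2}=0$ and $t\mapsto v^+(t)$ is continuous into $L^2(\Omega)$ up to $t=0$ (because $s\mapsto s^+$ is Lipschitz), letting $s\to 0^+$ in $\|v^+(t)\|_{L^2}^2\le\|v^+(s)\|_{L^2}^2$ forces $\|v^+(t)\|_{L^2}=0$ for all $t$, i.e.\ $v\le 0$ and therefore $u_1\ge u_2$.

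The main obstacle is the rigorous justification of the chain rule $\int_\Omega v_t\,v^+=\tfrac12\frac{d}{dt}\|v^+\|_{L^2}^2$, since $s\mapsto s^+$ is not differentiable at the origin. I would handle this by a Stampacchia-type argument: approximate $G$ by a sequence of smooth convex functions $G_\varepsilon$ with $G_\varepsilon'$ bounded and $G_\varepsilon'\to G'$ pointwise, for which differentiation under the integral sign is immediate, derive the corresponding inequality $\frac{d}{dt}\int_\Omega G_\varepsilon(v)\le 0$, and then pass to the limit $\varepsilon\to 0$ using the regularity of $v$ and dominated convergence. Alternatively one can invoke a standard lemma on the differentiation of the $L^2$-norm of the positive part along $H^1$-valued $C^1$ curves.
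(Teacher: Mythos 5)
Your proposal is correct and follows essentially the same route as the paper's proof: set $v=u_2-u_1$, test the weak formulation with $\varphi=v^+$, observe that the right-hand side is nonpositive and the gradient and Robin terms are nonnegative, obtain $\frac{1}{2}\frac{d}{dt}\|v^+(t)\|_{L^2(\Omega)}^2\leq 0$, and conclude from $v^+(0)=0$ and the continuity up to $t=0$ in $L^2(\Omega)$ that $v\leq 0$. Your additional care in justifying the chain rule for $t\mapsto\tfrac12\|v^+(t)\|_{L^2}^2$ and in checking that $v^+$ has vanishing trace on $\partial^D\Omega$ (hence is admissible in $H^1_\theta(\Omega)$) only makes explicit steps that the paper leaves implicit.
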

\begin{proof}
The function  $v\defeq u_2-u_1$  satisfies $v(0) = u_{2,0}- u_{1,0} \leq
0$ and  for  any $\varphi\in   C([0,T], H^1_\theta(\Omega))$, 
\begin{displaymath}
\int_\Omega v_t\varphi + \int_\Omega \nabla v\nabla
\varphi +  \int_{\partial^R \Omega}   \cot(\frac{\pi}{2}\theta) v \varphi =  \int_{\partial^R \Omega\cup
  \partial^N\Omega}  \frac{g}{\sin(\frac{\pi}{2}\theta)} \varphi+\int_\Omega f\varphi 
\end{displaymath}
with $f= f_2-f_1\leq 0$ and $g= g_2-g_1\leq 0$. 
              
	Now take $0\leq \varphi =v_{+}=\max\{v,0\} \in
        C([0,T], H^1_\theta(\Omega))$ to obtain 
	\begin{displaymath}
\frac{1}{2}\frac{d}{dt}\int_\Omega |v_{+}|^2 + \int_{
  \Omega} \abs{\nabla v_{+}}^2 +  \int_{\partial^R
  \Omega}   \cot(\frac{\pi}{2}\theta) |v_{+}|^{2}  \leq 0 . 
	\end{displaymath}
Therefore, using the continuity up to $t=0$ in  $L^2(\Omega)$, $\int_\Omega |v_{+}(t)|^2 \leq \int_\Omega |v_{+}(0)|^2 =0$ 
and then $v\leq 0$.
\end{proof}

As in the elliptic case, we can also compare solutions with different
types of boundary conditions in the parabolic framework.
\begin{theorem}
	\label{thm:neugeqdir2}
	Let $\Omega\subset\RN$ be a domain with compact boundary and
        let $S^{\theta_1}(t)$ and $S^{\theta_2}(t)$ be  the semigroups
        in Theorem \ref{thm:propiedades_sg_theta} for different $\theta$-boundary
        conditions. Then, for any $1\leq p \leq \infty$ and $0\leq
        u_0\in L^p(\Omega)$ we have 
	\begin{displaymath}
0 \leq \theta_1\leq \theta_2\leq 1 \quad \mbox{implies} \quad   S^{\theta_1}(t)u_0\leq
S^{\theta_2}(t)u_0 \quad t>0 . 
	\end{displaymath}
	In particular, if we denote $k^{\theta_1}$ and $k^{\theta_2}$ the corresponding heat kernels, we have that:
	\begin{displaymath}
	0< k^{\theta_1}(x,y,t)\leq k^{\theta_2}(x,y,t) \qquad
                x,y\in \Omega,  \ t>0. 
	\end{displaymath}
\end{theorem}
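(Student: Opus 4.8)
The plan is to mirror the proof of the elliptic comparison Theorem \ref{thm:neugeqdir2ell}: assuming $\theta_1\leq\theta_2$, I will show that $u_2(t)=S^{\theta_2}(t)u_0$ is a supersolution of the parabolic $\theta_1$-problem carrying the same initial datum, and then invoke the parabolic comparison principle Theorem \ref{thm:neugeqdir} with the single boundary condition $\theta_1$. Since it is enough to establish $S^{\theta_1}(t)u_0\leq S^{\theta_2}(t)u_0$ on a dense class of data and then transfer the inequality to every $1\leq p\leq\infty$ through the kernel, I would first take $0\leq u_0\in C^\infty_c(\Omega)$ supported away from $\partial\Omega$. For such data all compatibility conditions at $\partial\Omega$ hold trivially, so by the regularity of Theorem \ref{thm:semigrupo_L2}(v) (together with the smoothness of $\partial\Omega$ and $\theta_i$ needed there) the solutions $u_i(t)=S^{\theta_i}(t)u_0$ are classical, of class $C^{2,1}(\adh{\Omega}\times[0,\infty))$ up to $t=0$, so their normal derivatives exist pointwise.

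The heart of the argument is the boundary inequality. By positivity, Theorem \ref{thm:propiedades_sg_theta}(ii), $u_2\geq 0$, and since $B_{\theta_2}(u_2)=0$ one checks, exactly as in Theorem \ref{thm:neugeqdir2ell}, that $\frac{\partial u_2}{\partial n}\leq 0$ on $\partial\Omega$ (on the Dirichlet part because $u_2=0$ there with $u_2\geq 0$ inside, on the Robin part because $\frac{\partial u_2}{\partial n}=-\cot(\frac{\pi}{2}\theta_2)u_2\leq 0$, and trivially on the Neumann part). Using $\sin(\frac{\pi}{2}\theta_1)\leq\sin(\frac{\pi}{2}\theta_2)$, $\cos(\frac{\pi}{2}\theta_1)\geq\cos(\frac{\pi}{2}\theta_2)$ together with $\frac{\partial u_2}{\partial n}\leq 0$ and $u_2\geq 0$, I obtain $B_{\theta_1}(u_2)\geq B_{\theta_2}(u_2)=0$ on $\partial\Omega$. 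Integrating by parts in the classical identity $\int_\Omega (u_2)_t\varphi-\int_\Omega\Lap u_2\,\varphi=0$ and substituting $\frac{\partial u_2}{\partial n}=\frac{g}{\sin(\frac{\pi}{2}\theta_1)}-\cot(\frac{\pi}{2}\theta_1)u_2$ with $g:=B_{\theta_1}(u_2)\geq 0$ shows that $u_2$ is a weak solution of the $\theta_1$-problem in the precise sense of Theorem \ref{thm:neugeqdir}, with forcing $0$, nonnegative boundary datum $g$ and initial datum $u_0$ (on the Dirichlet part for $\theta_1$ one has $g=u_2\geq 0$, consistent with the Dirichlet requirement there). Since $u_0$ is compactly supported, $g$ is continuous on $[0,T]$, hence lies in $L^1((0,T),L^2(\partial\Omega))$. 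Applying Theorem \ref{thm:neugeqdir} with boundary condition $\theta_1$ to the pair formed by $u_2$ (datum $g\geq 0$) and $u_1$ (datum $0$), which share the same initial value and zero forcing, yields $u_1\leq u_2$, that is $S^{\theta_1}(t)u_0\leq S^{\theta_2}(t)u_0$.

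To reach arbitrary data I would pass to the limit: every $0\leq u_0\in L^2(\Omega)$ is an $L^2$-limit of nonnegative $u_0^n\in C^\infty_c(\Omega)$, and by the $L^2$-contractivity of both semigroups $S^{\theta_i}(t)u_0^n\to S^{\theta_i}(t)u_0$ in $L^2(\Omega)$, so the inequality survives almost everywhere and, by continuity for $t>0$, everywhere. The statement for all $1\leq p\leq\infty$ then follows from the kernel representation (\ref{eqn:ackrnpre}): testing $\int_\Omega g\,S^{\theta_i}(t)f$ against nonnegative $f,g\in C^\infty_c(\Omega)$ and using $S^{\theta_1}(t)f\leq S^{\theta_2}(t)f$ gives $\int_{\Omega\times\Omega}k^{\theta_1}(x,y,t)f(y)g(x)\leq\int_{\Omega\times\Omega}k^{\theta_2}(x,y,t)f(y)g(x)$, whence $k^{\theta_1}(x,y,t)\leq k^{\theta_2}(x,y,t)$, the strict positivity $k^{\theta_i}>0$ being already granted by Theorem \ref{thm:propiedades_sg_theta}(iv). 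Integrating this against any $0\leq u_0\in L^p(\Omega)$ then gives $S^{\theta_1}(t)u_0\leq S^{\theta_2}(t)u_0$ for every $p$, including $p=\infty$.

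I expect the main obstacle to be the regularity bookkeeping rather than the comparison idea itself: one must guarantee that $\frac{\partial u_2}{\partial n}$ exists pointwise and that $g=B_{\theta_1}(u_2)$ is an admissible boundary datum, in particular integrable in time up to $t=0$, so that Theorem \ref{thm:neugeqdir} applies literally. This is exactly why I restrict the first step to smooth, compactly supported, nonnegative initial data, for which all compatibility conditions hold and the solution is smooth up to $\partial\Omega$ and up to $t=0$, and defer the general situation to the density and kernel arguments, which sidestep any delicate boundary behaviour as $t\to 0^+$.
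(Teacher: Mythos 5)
Your proof is correct and follows essentially the same route as the paper: the key boundary inequality $B_{\theta_1}(u_2)\geq B_{\theta_2}(u_2)=0$ (obtained from $u_2\geq 0$ and $\frac{\partial u_2}{\partial n}\leq 0$ on $\partial\Omega$), then the parabolic comparison Theorem \ref{thm:neugeqdir} with the single boundary condition $\theta_1$, and finally the kernel representation to propagate the ordering to all $1\leq p\leq\infty$. The only difference is bookkeeping: the paper applies the comparison directly to $v=u_2-u_1$ for arbitrary $0\leq u_0\in L^2(\Omega)$, using the smoothness from Theorem \ref{thm:semigrupo_L2}(v) for $t>0$ together with the $L^2$-continuity at $t=0$ built into Theorem \ref{thm:neugeqdir}, so your restriction to compactly supported smooth data and the subsequent density step are a harmless but unnecessary detour (and note that Theorem \ref{thm:semigrupo_L2}(v) only gives $C^{2,1}(\adh{\Omega}\times(0,\infty))$, not regularity up to $t=0$; the latter is not actually needed for the argument).
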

\begin{proof}
Assume first $0\leq u_{0}\in L^{2}(\Omega)$ and    consider  $v(t) \defeq S^{\theta_2}(t)u_0-
S^{\theta_1}(t)u_0=u_2(t)-u_1(t)$. From Theorem \ref{thm:neugeqdir}, 
$u_{i}(t)\geq 0$ and from the smoothness in Theorem
\ref{thm:semigrupo_L2}, $v$  satisfies $v_t-\Lap v = 0$ in 
$\Omega\times(0,\infty)$,  $v(0)=0$ in $\Omega$ and
\begin{displaymath}
B_{\theta_{1}}(v(x))=   \sin(\frac{\pi}{2}\theta_1(x))\frac{\partial v}{\partial
                          n}+\cos(\frac{\pi}{2}\theta_1(x))v\geq 0
                        \quad  on \ \partial\Omega\times (0,\infty) . 
\end{displaymath}
This is because $\sin(\frac{\pi}{2}\theta_1) \leq \sin(\frac{\pi}{2}\theta_2)$,
$\cos(\frac{\pi}{2}\theta_1) \geq \cos(\frac{\pi}{2}\theta_2)$  and
$u_2(t)\geq 0$ in $\Omega$,  $\frac{\partial u_2(t)}{\partial n}\leq 0$ on
$\partial \Omega$. This is clear in the Dirichlet and Neumann parts of
the boundary and in the Robin one is because
$0<\theta_{2}<1$. 

Thus,  
$\sin(\frac{\pi}{2}\theta_1)\frac{\partial
  u_2(t)}{\partial n}+\cos(\frac{\pi}{2}\theta_1)u_2(t)\geq
\sin(\frac{\pi}{2}\theta_2)\frac{\partial u_2(t)}{\partial
  n}+\cos(\frac{\pi}{2}\theta_2)u_2(t) = B_{\theta_{2}}(u_{2}(t)) =0$.  
Therefore, we can apply Theorem
\ref{thm:neugeqdir}  to obtain that $v\geq 0$ as claimed (note that, as $v$ is a classical solution of the heat equation, it is in a particular a weak solution, so we can apply Theorem \ref{thm:neugeqdir}).

In particular for any
$0\leq \varphi\in C^\infty_c(\Omega)$:
\begin{displaymath}
 \label{eqn:heatgen4}
 \int_\Omega k^{\theta_1}(x,y,t)\varphi(y)dy=
 S^{\theta_1}(t)\varphi (x) \leq S^{\theta_2}(t)\varphi
 (x) =  \int_\Omega k^{\theta_2}(x,y,t)\varphi(y)dy \ \ \
 \forall x\in \Omega, \ \  \forall t>0, 
\end{displaymath}
and therefore $k^{\theta_1}(x,y,t)\leq k^{\theta_2}(x,y,t)$ for every
$x,y\in \Omega$ and $t>0$.

Finally, if $0\leq u_0\in L^p(\Omega)$, the ordering of the kernels above
and  (\ref{eqn:ackrnpre}) imply  $S^{\theta_1}(t)u_0\leq S^{\theta_2}(t)u_0$.
\end{proof}

As a consequence we get the following important result for exterior domains.

\begin{corollary}
  \label{cor:Gaussian_bound}

Let $\Omega\subset\RN$ be an exterior domain and $k^\theta$ its
associated heat kernel for some homogeneous $\theta-$boundary
conditions. There exists constants $c,C>0$ such that 
\begin{equation}
\label{eqn:gyryabound}
0 < k^\theta(x,y,t) \leq C\frac{e^{-\frac{\abs{x-y}^2}{4ct}}}{t^{N/2}} \qquad
                x,y\in \Omega , \quad t>0 . 
\end{equation}
    
  In particular, 
         \begin{equation}
     \label{eqn:ackrn}
     \norm{S^\theta(t)u_0}_{L^\infty(\Omega)}\leq
     C\frac{\norm{u_0}_{L^1(\Omega)}}{t^{N/2}}  \quad t>0 . 
   \end{equation}
 \end{corollary}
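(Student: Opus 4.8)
My plan is to reduce the estimate to the single worst case $\theta\equiv 1$ (Neumann) and then establish a Gaussian upper bound there. First I would invoke the monotonicity of the kernels in $\theta$ from Theorem~\ref{thm:neugeqdir2}: every admissible boundary function satisfies $\theta\le 1$, and $\theta\equiv 1$ is the Neumann condition, so that theorem yields $0<k^\theta(x,y,t)\le k^{1}(x,y,t)$ for all $x,y\in\Omega$ and $t>0$, where $k^{1}$ denotes the Neumann kernel. The strict positivity on the left is exactly what is recorded in Theorem~\ref{thm:propiedades_sg_theta}(iv) and Theorem~\ref{thm:neugeqdir2}. Hence it suffices to prove (\ref{eqn:gyryabound}) for $k^{1}$ alone; this is the genuinely hard case, since, unlike the Dirichlet kernel, the Neumann kernel is not dominated by the free Gaussian kernel of $\RN$.

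For the Neumann bound I would argue in two steps: first the on-diagonal (ultracontractive) estimate, then the Gaussian off-diagonal factor. For the on-diagonal bound I would use that an exterior domain $\Omega=\RN\setminus\hole$ with smooth compact boundary is a Sobolev extension domain, so there is a bounded extension operator $E\colon H^1(\Omega)\to H^1(\RN)$. Composing $E$ with the classical Nash inequality in $\RN$ produces a Nash inequality on $\Omega$ for the Neumann form $Q(u)=\int_\Omega\abs{\nabla u}^2$, of the shape $\norm{u}_{L^2(\Omega)}^{2+4/N}\le C\,Q(u)\,\norm{u}_{L^1(\Omega)}^{4/N}$. By the standard Nash--Moser argument this is equivalent to the ultracontractivity bound $k^{1}(x,x,t)\le C\,t^{-N/2}$, and combined with $k^{1}(x,y,t)\le\bigl(k^{1}(x,x,t)\,k^{1}(y,y,t)\bigr)^{1/2}$ (Cauchy--Schwarz in the semigroup) this already gives the diagonal-free bound $k^{1}(x,y,t)\le C\,t^{-N/2}$. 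To insert the factor $e^{-\abs{x-y}^2/4ct}$ I would then run Davies' exponential-weight perturbation method, testing the semigroup against $e^{\psi}$ with $\psi$ Lipschitz and optimising, which converts any ultracontractive bound for a Dirichlet-form semigroup into a full Gaussian upper bound. Alternatively, one may cite directly the two-sided Gaussian estimates for Neumann heat kernels on inner uniform domains, after noting that an exterior domain with smooth compact boundary is uniform, hence inner uniform, and carries the global volume doubling and scale-invariant Poincaré inequalities that theory requires.

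Finally, the consequence (\ref{eqn:ackrn}) follows immediately from (\ref{eqn:gyryabound}). Using the kernel representation (\ref{eqn:ackrnpre}), Corollary~\ref{cor:absvalue}, and $e^{-\abs{x-y}^2/4ct}\le 1$, one estimates, for $x\in\Omega$ and $t>0$,
\[
\abs{S^\theta(t)u_0(x)}\le\int_\Omega k^\theta(x,y,t)\abs{u_0(y)}\,dy\le\Bigl(\sup_{x,y\in\Omega}k^\theta(x,y,t)\Bigr)\norm{u_0}_{L^1(\Omega)}\le\frac{C}{t^{N/2}}\,\norm{u_0}_{L^1(\Omega)},
\]
and taking the supremum over $x\in\Omega$ gives (\ref{eqn:ackrn}).

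The main obstacle is precisely the on-diagonal-to-off-diagonal passage for the Neumann kernel: the reduction in the first step and the final estimate are routine, but the Gaussian upper bound genuinely uses the geometry of $\Omega$, either through the extension operator and Nash inequality together with the Davies argument, or through the inner-uniformity hypotheses underlying the cited estimates. One must check that all constants are uniform in $t$, which is the case here because balls in $\Omega$ have volume comparable to $r^N$ at every scale, so doubling and the Nash inequality hold globally.
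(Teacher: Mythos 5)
Your overall architecture coincides with the paper's proof: both arguments reduce to the Neumann case via the kernel monotonicity of Theorem \ref{thm:neugeqdir2} (since every admissible $\theta$ satisfies $\theta\le 1$, one gets $0<k^\theta(x,y,t)\le k^{1}(x,y,t)$), both then rest on the Gaussian upper bound for the Neumann kernel of an exterior domain, and both deduce (\ref{eqn:ackrn}) by integrating the kernel bound against $\abs{u_0}$. The only difference is how the Neumann bound is obtained: the paper simply cites it (\cite{gyryathesis} Theorem 1.3.1, see also \cite{gyrya2011neumann} Theorem 3.10), which is exactly your ``alternative'' route through the theory of inner uniform domains, whereas your primary plan is to prove it from scratch via an extension operator, Nash's inequality, and Davies' perturbation method.

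That primary route, as written, has a genuine gap at the Nash step. A bounded extension $E\colon H^1(\Omega)\to H^1(\RN)$ controls $\norm{\nabla (Eu)}_{L^2(\RN)}$ only through the \emph{full} $H^1(\Omega)$ norm of $u$, not through $\norm{\nabla u}_{L^2(\Omega)}$ alone; composing it with the Nash inequality in $\RN$ therefore yields only the defective inequality $\norm{u}_{L^2(\Omega)}^{2+4/N}\le C\,\bigl(\int_\Omega\abs{\nabla u}^2+\norm{u}_{L^2(\Omega)}^2\bigr)\norm{u}_{L^1(\Omega)}^{4/N}$, and the Nash--Moser iteration then gives $k^{1}(x,y,t)\le C e^{ct}t^{-N/2}$, which is useless as $t\to\infty$. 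Obtaining the bound \emph{uniformly in time} is precisely the hard, global part of the Neumann problem on an exterior domain; it can be repaired (for instance, extend $u-c$ rather than $u$, where $c$ is the mean of $u$ on an annulus surrounding $\hole$, and use Poincar\'e--Wirtinger on that annulus to control the correction by $\norm{\nabla u}_{L^2}$ alone), but this extra idea is needed and your appeal to volume doubling of balls does not by itself supply it. Your fallback citation, by contrast, is sound and is literally the paper's proof, so the proposal closes through that route.
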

 \begin{proof}
    The Gaussian  bound
(\ref{eqn:gyryabound}) can be found in \cite{gyryathesis}
Theorem 1.3.1 for Neumann boundary conditions (see also \cite{gyrya2011neumann} Theorem 3.10), that is for $\theta
\equiv 1$. Theorem \ref{thm:neugeqdir2}  implies
the bound for other $\theta$-boundary conditions.

The estimate (\ref{eqn:ackrn})  is a  consequence of the
Gaussian bounds above. 
 \end{proof}

\section{The asymptotic  profile}
\label{sec:profiles}

In this section we get back to the case of an exterior domain, that
is, the complement of a compact set $\hole$ that we denote the 
\emph{hole}, which is the closure of a bounded smooth set; hence, 
$\Omega=\RN\backslash \hole$.  
We assume $0\in
\mathring{\hole}$, the interior of the hole,  and observe that   $\hole$ may have different connected
components, although $\Omega$ is connected. 

We are going to construct the asymptotic profile for problem
(\ref{eq:heat_theta}), which is a function that depends only on the
boundary conditions and the domain, which will
characterise the asymptotic mass of solutions as we will see in
Section \ref{sec:asymptoticmass}. We will show below that the profile
can be constructed from parabolic and elliptic arguments and that both
procedures give the same function.

We start with the parabolic construction. 

\begin{lemma}
	\label{lemma:defparabolic}
	Given an exterior domain $\Omega\subset \RN$ and some
homogenous    $\theta$-boundary conditions on $\partial \Omega$, we define
        its associated \textit{$\theta$-parabolic profile} as the
        pointwise monotonically decreasing limit: 
	\begin{equation}
		\label{eqn:defparp}
		\Phi^\theta_p(x)\defeq \lim_{t\to\infty}S^\theta(t)1_\Omega(x) \qquad x\in \Omega,
	\end{equation}
	where $1_\Omega$ is the characteristic function of
        $\Omega$. Then, $\Phi^\theta_p$ is well defined and
        furthermore 
	\begin{displaymath}
		0\leq \Phi^\theta_p(x)\leq 1 \qquad \forall x\in\Omega.
	\end{displaymath}
\end{lemma}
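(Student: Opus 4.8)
The plan is to exploit two structural features of the semigroup established earlier, namely that it is order preserving and a contraction on $L^\infty(\Omega)$, together with the kernel representation that makes pointwise evaluation meaningful. First I would observe that $1_\Omega \in L^\infty(\Omega)$ with $0 \leq 1_\Omega \leq 1$, so the extension of the semigroup to $L^\infty(\Omega)$ from Theorem \ref{thm:propiedades_sg_theta}(i) applies and, by the kernel representation (\ref{eqn:ackrnpre}), the quantity
\begin{displaymath}
S^\theta(t)1_\Omega(x) = \int_\Omega k^\theta(x,y,t)\,dy
\end{displaymath}
is a genuinely pointwise-defined object for each $x\in\Omega$ and $t>0$; the Gaussian bound of Corollary \ref{cor:Gaussian_bound} guarantees the integral converges.

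Next I would establish the two-sided bound $0 \leq S^\theta(t)1_\Omega(x) \leq 1$. The lower bound is immediate from the positivity of the kernel (equivalently, from the order-preserving property of Theorem \ref{thm:propiedades_sg_theta}(ii) applied to $1_\Omega \geq 0$). The upper bound follows from the contraction property on $L^\infty(\Omega)$ in Theorem \ref{thm:propiedades_sg_theta}(i), since $\norm{S^\theta(t)1_\Omega}_{L^\infty(\Omega)}\leq \norm{1_\Omega}_{L^\infty(\Omega)} = 1$.

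The heart of the argument is monotonicity in $t$. For any $s>0$, the bound just proved gives $S^\theta(s)1_\Omega \leq 1 = 1_\Omega$ pointwise. Applying the order-preserving operator $S^\theta(t)$ to both sides and invoking the semigroup property then yields
\begin{displaymath}
S^\theta(t+s)1_\Omega = S^\theta(t)\big(S^\theta(s)1_\Omega\big) \leq S^\theta(t)1_\Omega ,
\end{displaymath}
so that $t \mapsto S^\theta(t)1_\Omega(x)$ is nonincreasing for each fixed $x$. Being nonincreasing and bounded below by $0$, it converges as $t\to\infty$; the limit $\Phi^\theta_p(x)$ is therefore well defined and inherits the bounds $0 \leq \Phi^\theta_p(x) \leq 1$.

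The main obstacle I anticipate is purely the pointwise-versus-almost-everywhere issue: since $\Omega$ is an exterior domain, $1_\Omega$ belongs neither to $L^1(\Omega)$ nor to $L^2(\Omega)$, so one cannot invoke the $L^2$ theory of Theorem \ref{thm:semigrupo_L2} directly and must ensure that $S^\theta(t)1_\Omega$ denotes an actual function rather than an equivalence class. This is resolved precisely by the kernel representation and the Gaussian upper bound, after which the order and contraction properties carry out the rest with no further estimates required.
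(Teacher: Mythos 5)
Your proof is correct and follows essentially the same argument as the paper: the $L^\infty$ contraction property gives $S^\theta(t)1_\Omega\leq 1_\Omega$, order preservation together with the semigroup property gives monotone decrease in $t$, and the limit then exists pointwise with the inherited bounds $0\leq\Phi^\theta_p\leq 1$. Your additional remark invoking the kernel representation (\ref{eqn:ackrnpre}) to justify that $S^\theta(t)1_\Omega(x)$ is a genuine pointwise-defined function is a reasonable refinement the paper leaves implicit, but it does not change the substance of the argument.
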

\begin{proof}
From Theorem \ref{thm:propiedades_sg_theta}, the semigroup $S^{\theta}(t)$ is
of contractions in $L^\infty(\Omega)$. Hence in particular,
$\norm{S^{\theta}(t)1_\Omega}_{L^\infty(\Omega)}\leq 1$, that is: 
	\begin{displaymath}
		0\leq S^{\theta}(t)1_\Omega(x)\leq 1_\Omega(x) \qquad \forall x\in \Omega.
	\end{displaymath}
Since the semigroup is order preserving  we obtain:
	\begin{displaymath}
		S^{\theta}(t+s)1_\Omega\leq S^{\theta}(s)1_\Omega \qquad \forall s,t>0,
	\end{displaymath} 
	that is, $S^{\theta}(t)1_\Omega(x)$ is pointwise monotonically
        decreasing in $t$ and  is bounded below by $0$. 
Therefore, the limit in (\ref{eqn:defparp}) is well defined and $0\leq
\Phi^\theta_p\leq 1$. 
\end{proof}

 Now we perform the elliptic construction. Firstly, for every $R>0$ we consider the problem 
\begin{equation}
	\label{eqn:constphie}
	\left\{
	\begin{aligned}
		-\Lap \phi_{R}^\theta(x)&=0 \ \ \ &&\forall x\in \Omega_R\defeq \Omega\cap B(0,R) \\
		B_\theta(\phi_{R}^\theta)(x) &= 0 \ \ \ &&\forall x\in\partial \Omega \\
		\phi_{R}^\theta(x) &= 1 \ \ \ &&\forall \abs{x}=R . 
	\end{aligned}
	\right. 
\end{equation} 

As for large $R$,  $\Omega_R$ is a bounded regular domain, we have a unique solution
$\phi_{R}^\theta$ to this problem (see for example
\cite{gilbarg2015elliptic} Theorem 6.31). Furthermore, we can compare
these functions for different $R$: 
\begin{proposition}
	Let $\Omega\subset \RN$ be an exterior domain, $R_1<R_2$ and
        $\phi^\theta_{{R_1}},\phi^\theta_{{R_2}}$ be defined as in
        (\ref{eqn:constphie}). Then 
	\begin{displaymath}
	1\geq \phi_{{R_1}}^\theta(x)\geq \phi_{{R_2}}^\theta(x)\geq 0 \qquad \forall x\in\Omega_{R_1}.
	\end{displaymath}
\end{proposition}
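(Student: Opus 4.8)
The plan is to obtain all three inequalities as consequences of the elliptic comparison principle, Theorem \ref{thm:neugeqdirell}, applied on the bounded domain $\Omega_R$. The key observation is that $\Omega_R$ is a bounded domain whose compact boundary splits into two pieces: the inner boundary $\partial\Omega$, on which $\phi^\theta_R$ satisfies the homogeneous condition $B_\theta(\phi^\theta_R)=0$, and the outer sphere $\{\abs{x}=R\}$, on which $\phi^\theta_R$ satisfies a (possibly nonhomogeneous) Dirichlet condition. Thus each $\phi^\theta_R$ is a weak solution on $\Omega_R$ of the problem of Theorem \ref{thm:neugeqdirell} with $\gamma=0$ and $f\equiv 0$, for the boundary function $\tilde\theta$ that equals $\theta$ on $\partial\Omega$ and equals $0$ (Dirichlet) on $\{\abs{x}=R\}$. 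Since $\tilde\theta$ has a genuine Dirichlet part, $\tilde\theta\not\equiv 1$, so the hypotheses of Theorem \ref{thm:neugeqdirell} are met even though $\gamma=0$.

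First I would establish the bounds $0\leq \phi^\theta_R\leq 1$ on $\Omega_R$ by comparing $\phi^\theta_R$ with the constant functions $0$ and $1$, both of which are harmonic. For the lower bound, the constant $0$ has $B_\theta(0)=0$ on $\partial\Omega$ and Dirichlet value $0\leq 1$ on $\{\abs{x}=R\}$, so its boundary data does not exceed that of $\phi^\theta_R$, and Theorem \ref{thm:neugeqdirell} gives $\phi^\theta_R\geq 0$. For the upper bound, the constant $1$ satisfies $B_\theta(1)=\cos(\frac{\pi}{2}\theta)\geq 0$ on $\partial\Omega$ (here I use $0\leq\theta\leq 1$, so the cosine is nonnegative) and has Dirichlet value $1$ on $\{\abs{x}=R\}$; hence its boundary data dominates that of $\phi^\theta_R$, and the same theorem yields $\phi^\theta_R\leq 1$.

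The main comparison then follows by restricting $\phi^\theta_{R_2}$ to the smaller domain $\Omega_{R_1}$. There $\phi^\theta_{R_2}$ is still harmonic and still satisfies $B_\theta(\phi^\theta_{R_2})=0$ on $\partial\Omega$, while on the outer sphere $\{\abs{x}=R_1\}$ the already-established upper bound gives $\phi^\theta_{R_2}\leq 1=\phi^\theta_{R_1}$. Thus on $\Omega_{R_1}$ both functions solve the same problem with $f\equiv 0$ and identical data on $\partial\Omega$, while $\phi^\theta_{R_1}$ carries the larger Dirichlet data on $\{\abs{x}=R_1\}$; Theorem \ref{thm:neugeqdirell} then gives $\phi^\theta_{R_1}\geq \phi^\theta_{R_2}$ on $\Omega_{R_1}$. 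Combining this with the bounds of the previous step yields $1\geq\phi^\theta_{R_1}\geq\phi^\theta_{R_2}\geq 0$ on $\Omega_{R_1}$.

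The step I expect to require the most care is the bookkeeping of the boundary data in the reduction to Theorem \ref{thm:neugeqdirell}: one must check that the constant $1$ really is an admissible comparison function with $B_\theta(1)\geq 0$ (which is exactly where the constraint $0\leq\theta\leq1$ enters), and that the outer Dirichlet piece guarantees $\tilde\theta\not\equiv 1$, so that the comparison remains valid in the delicate case $\gamma=0$ that is needed here, the profile equation being genuinely homogeneous.
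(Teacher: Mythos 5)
Your proof is correct and follows essentially the same route as the paper's: compare $\phi^\theta_R$ with the constant supersolution $1$ (and subsolution $0$) via Theorem \ref{thm:neugeqdirell}, then observe that $\phi^\theta_{R_2}$ restricted to $\Omega_{R_1}$ is a subsolution of the $R_1$-problem because $\phi^\theta_{R_2}\leq 1=\phi^\theta_{R_1}$ on $\{\abs{x}=R_1\}$. The only difference is that you spell out details the paper leaves implicit, namely the lower bound $\phi^\theta_R\geq 0$ and the fact that the outer Dirichlet piece makes the comparison theorem applicable with $\gamma=0$.
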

\begin{proof}
	First of all, as $1_{\Omega_R}$ is a supersolution of (\ref{eqn:constphie}) for any $R>0$,
	using Theorem \ref{thm:neugeqdirell}, we obtain that
        $\phi_{R}(x)\leq 1$ in $\Omega_{R}$.

        Now, for $\abs{x}=R_1$, $\phi_{R_2}(x)\leq 1 =
        \phi_{R_1}(x)$, thus $\phi_{R_2}$ is a subsolution of problem
        (\ref{eqn:constphie}) with $R=R_1$. Hence, using Theorem
        \ref{thm:neugeqdirell} we obtain the result.  
      \end{proof}

With this we construct the  elliptic profile as follows. 
\begin{lemma}
	\label{lemma:defharmonicprof}

	Let the $\theta$-elliptic profile $\Phi_e^\theta$ be defined
        as the monotonically decreasing limit  
	\begin{displaymath}
		\label{eqn:defellp}
0\leq \Phi_e^{\theta}(x)\defeq \lim_{R\to \infty}
\phi^\theta_{R}(x)\leq 1  \qquad x\in \Omega.
	\end{displaymath}

	Then, if $\partial \Omega$ and $\theta$ are regular enough, $\Phi_e^{\theta} $ is a harmonic function
        in $\Omega$, $\Phi_e^\theta \in C^2(\overline{\Omega})\cap
        C^{\infty}(\Omega)$ and
        $B_\theta(\Phi_e^\theta)\equiv 0$ on $\partial \Omega$. In
        particular, $\Phi_e^\theta \in BUC_{\theta}(\Omega)$. 
      \end{lemma}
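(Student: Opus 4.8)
The plan is to pass to the limit $R\to\infty$ in the family $\phi^\theta_R$ and verify that the monotone limit $\Phi^\theta_e$ inherits harmonicity, boundary regularity, and the boundary condition. First I would fix an arbitrary bounded subregion and obtain uniform interior estimates: since each $\phi^\theta_R$ is harmonic in $\Omega_R$ with $0\le \phi^\theta_R\le 1$, and the sequence is monotonically decreasing and bounded, for any fixed $R_0$ the functions $\phi^\theta_R$ with $R>R_0$ are uniformly bounded harmonic functions on $\Omega_{R_0}$. By interior gradient and higher-order estimates for harmonic functions (the Schauder-type estimates recalled in the appendix), the sequence is bounded in $C^k_{loc}(\Omega)$ for every $k$, hence precompact in $C^2_{loc}(\Omega)$. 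Combined with the pointwise monotone convergence, this upgrades the convergence to $C^2_{loc}(\Omega)$ (indeed $C^\infty_{loc}$), so the limit $\Phi^\theta_e$ is harmonic in $\Omega$ and belongs to $C^\infty(\Omega)$, with $0\le\Phi^\theta_e\le 1$.

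Next I would handle the behaviour up to the boundary $\partial\Omega$. The key point is that $\partial\Omega$ is \emph{fixed} and compact, and for all large $R$ each $\phi^\theta_R$ satisfies the same boundary condition $B_\theta(\phi^\theta_R)=0$ on $\partial\Omega$. I would work in a fixed neighbourhood $\Omega\cap B(0,R_0)$ of $\partial\Omega$ (with $R_0$ chosen so that $\partial\Omega\subset B(0,R_0)$) and apply elliptic boundary regularity (Schauder estimates up to the boundary) for the problem $-\Lap\phi^\theta_R=0$ with $B_\theta(\phi^\theta_R)=0$ on $\partial\Omega$. Since the $\phi^\theta_R$ are uniformly bounded in $L^\infty$, these estimates give a uniform $C^{2,\alpha}$ bound in a collar of $\partial\Omega$, which yields precompactness in $C^2(\overline{\Omega}\cap B(0,R_0))$. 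Passing to the limit then shows $\Phi^\theta_e\in C^2(\overline{\Omega})$ near the boundary and that the boundary condition survives the limit, i.e. $B_\theta(\Phi^\theta_e)\equiv 0$ on $\partial\Omega$. Together with the interior part this gives $\Phi^\theta_e\in C^2(\overline{\Omega})\cap C^\infty(\Omega)$.

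Finally, to conclude $\Phi^\theta_e\in BUC_\theta(\Omega)$, I would note that the limit is bounded (since $0\le\Phi^\theta_e\le 1$), that it vanishes on $\partial^D\Omega$ because each $\phi^\theta_R$ does (Dirichlet is the case $\theta\equiv 0$, which is part of $B_\theta=0$), and that continuity up to and including the boundary has just been established on the compact piece near $\partial\Omega$; away from $\partial\Omega$ the function is smooth and bounded, so uniform continuity on all of $\Omega$ follows from the Gaussian/harmonic decay controlling oscillations at infinity together with uniform continuity on the compact collar. Hence $\Phi^\theta_e$ is bounded and uniformly continuous with the Dirichlet trace vanishing, i.e. $\Phi^\theta_e\in BUC_\theta(\Omega)$.

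The main obstacle I anticipate is the boundary regularity step: one must ensure the up-to-the-boundary Schauder estimates are \emph{uniform in $R$}, which is what the hypothesis ``$\partial\Omega$ and $\theta$ regular enough'' is designed to secure, and one must correctly handle the mixed nature of $B_\theta$ (Dirichlet on $\partial^D\Omega$, Robin/Neumann on $\partial^R\Omega\cup\partial^N\Omega$) so that the limiting boundary condition is genuinely $B_\theta(\Phi^\theta_e)=0$ in the pointwise sense rather than merely weakly.
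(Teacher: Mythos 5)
Your proposal is correct and follows essentially the same route as the paper: the paper also passes to the monotone limit, obtains harmonicity and $C^\infty$ regularity in the interior (via the classical fact that monotone pointwise limits of harmonic functions are harmonic, where you instead use interior estimates plus compactness -- a minor variation), and then applies the up-to-the-boundary Schauder estimates of Theorem \ref{thm:ellsch} on a fixed collar $\Omega_{R_0}$ to get uniform $C^{2+\alpha}$ bounds, Ascoli--Arzel\`a compactness, and hence $B_\theta(\Phi_e^\theta)\equiv 0$ in the pointwise sense, with $BUC_\theta(\Omega)$ membership following from boundedness, continuity up to $\partial\Omega$, and the interior gradient estimates of Theorem \ref{thm:4.6}. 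The uniformity-in-$R$ concern you flag is exactly what the paper's regularity hypotheses on $\partial\Omega$ and $\theta$ are there to guarantee.
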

\begin{proof}
	The limit exists  because of the monotonicity of
        $\phi^{\theta}_{R}(x)$. Furthermore, for any ball $B\subset \Omega$,
        for sufficiently large $R$ we have that $\phi^\theta_{R}$ is
        harmonic in $B$. As the monotonic pointwise limit of harmonic functions
        is harmonic, $\Phi_e^\theta$ is harmonic in $B$. As $B$ was
        arbitrary, $\Phi_e^\theta$ is harmonic in $\Omega$ and hence
        $C^{\infty}(\Omega)$. 
	
	Finally, to see that $\Phi_e^\theta$ satisfies the boundary
        condition, we need to see that the $\{\phi^\theta_{R}\}$ and
        its derivatives converge uniformly,  close to the boundary
        $\partial \Omega$. To do this, we consider, for $R> R_{0}$,
        the restriction of  $\phi^{\theta}_{R}$ to  $\Omega_{R_0}$
        that  satisfies 
	\begin{displaymath}
		\left\{
		\begin{aligned}
			-\Lap \phi^{\theta}_{R}(x) & = 0 \qquad  && \forall x\in \Omega_{R_0} \\
			B_\theta(\phi^{\theta}_{R}) (x) & = 0 \qquad && \forall x\in \partial \Omega \\
			\phi^{\theta}_{R}(x) & \leq 1 \qquad &&
                        \forall \abs{x}=R_0 . 	
		\end{aligned}	
		\right.
	\end{displaymath}
	Then, if $\partial \Omega$ is regular and $\theta \in C^{1+\alpha}(\partial \Omega)$ for $0<\alpha<1$, we apply Schauder estimates from Theorem \ref{thm:ellsch} and obtain
	\begin{displaymath}
		\norm{\phi^{\theta}_{R}}_{C^{2+\alpha}(\adh{\Omega_{R_0}})}\leq C.
	\end{displaymath}

	Hence, using the Ascoli-Arzelà  theorem, we have uniform convergence of a
        subsequence of $\phi_{R}^\theta$ and its derivatives. Therefore,
        as $B_\theta(\phi^\theta_{R})\equiv 0$ in $\partial \Omega$
        for any $R$, then $B_\theta(\Phi_e^\theta)\equiv 0$. 
  In particular, $\Phi_e^\theta$ is in $BUC(\Omega)$ because it is
  continuous up to the boundary and it is a bounded harmonic function,
  so uniformly continuous due to the Schauder estimates of Theorem
  \ref{thm:4.6}.  
      \end{proof}

      \begin{remark}
The values of the function  $\phi^0_{R}(x)$ above is denoted in the
literature as the  harmonic measure for the point $x$ of the set 
        $E=\{y\in \Omega :\abs{y}=R\}$, and is  denoted  as
        $\omega^x_{\Omega_R}(E)$ (see e.g. 
        \cite{harmonicmeasure}).

 Also, the  elliptic profile above when $N\geq 3$ is sometimes referred in the literature as a
  \textit{harmonic profile} or \textit{réduite}. See for example
  \cite{gyrya2011neumann}. Furthermore, it can be understood as
  $1-\omega^x_\Omega(\partial\Omega)$ where $\omega^x_\Omega$ is the
  harmonic measure in $\Omega$ (See \cite{harmonicmeasure}).
\end{remark}

We now  prove that both profiles in fact coincide:
\begin{proposition}
	\label{prop:profilesareequal}
	Let $\Omega=\mathbb{R}^N\backslash \hole$ be an exterior
        domain.
        Then  the   elliptic and parabolic profiles coincide,
        $\Phi_e^\theta=\Phi^\theta_p$,  so we denote them
        $\Phi^{\theta}$. Also $S^\theta(t)\Phi^\theta = \Phi^\theta$ for $t>0$. 
\end{proposition}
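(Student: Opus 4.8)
The plan is to establish the two inequalities $\Phi^\theta_p\le\Phi_e^\theta$ and $\Phi_e^\theta\le\Phi^\theta_p$ separately, obtaining the invariance $S^\theta(t)\Phi^\theta=\Phi^\theta$ as a by-product of either direction. The first step is to show that the parabolic profile is itself a fixed point of the semigroup. Using the kernel representation (\ref{eqn:ackrnpre}) together with the fact, from Lemma \ref{lemma:defparabolic}, that $S^\theta(s)1_\Omega\downarrow\Phi^\theta_p$ pointwise with $0\le S^\theta(s)1_\Omega\le 1$, I can dominate the integrand by $k^\theta(x,\cdot,t)$, which is integrable since $\int_\Omega k^\theta(x,y,t)\,dy=S^\theta(t)1_\Omega(x)\le 1$ by Theorem \ref{thm:propiedades_sg_theta}. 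Passing the limit inside the integral and using the semigroup property gives
\[
S^\theta(t)\Phi^\theta_p(x)=\int_\Omega k^\theta(x,y,t)\Phi^\theta_p(y)\,dy=\lim_{s\to\infty}S^\theta(t+s)1_\Omega(x)=\Phi^\theta_p(x).
\]
Since $\Phi^\theta_p$ is then a fixed point of the analytic semigroup, the smoothing of $S^\theta(t)$ and differentiation of the kernel representation in $(x,t)$ force $\Phi^\theta_p$ to be a bounded harmonic function with $0\le\Phi^\theta_p\le 1$ and $B_\theta(\Phi^\theta_p)=0$ on $\partial\Omega$.

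For $\Phi^\theta_p\le\Phi_e^\theta$ I would argue by elliptic comparison on the truncated domains $\Omega_R$. For each fixed $R$, both $\Phi^\theta_p$ and $\phi^\theta_R$ are harmonic in $\Omega_R$ and satisfy $B_\theta(\cdot)=0$ on $\partial\Omega$, while on $\{\abs{x}=R\}$ we have $\Phi^\theta_p\le 1=\phi^\theta_R$. Theorem \ref{thm:neugeqdirell} (applied on $\Omega_R$, with Dirichlet data on $\{\abs{x}=R\}$) then yields $\Phi^\theta_p\le\phi^\theta_R$ in $\Omega_R$, and letting $R\to\infty$ and invoking Lemma \ref{lemma:defharmonicprof} gives $\Phi^\theta_p\le\Phi_e^\theta$.

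For the reverse inequality the decisive point is that $\Phi_e^\theta$ is also invariant: $S^\theta(t)\Phi_e^\theta=\Phi_e^\theta$. By Lemma \ref{lemma:defharmonicprof}, $\Phi_e^\theta\in BUC_\theta(\Omega)$ is a bounded $C^2(\adh{\Omega})$ harmonic function with $B_\theta(\Phi_e^\theta)=0$, hence a stationary classical solution of (\ref{eq:heat_theta}); since the generator of the $BUC_\theta(\Omega)$-semigroup of Theorem \ref{thm:propiedades_sg_theta} acts as $\Lap$ on such functions (see \cite{mora1983semilinear}), $\Phi_e^\theta$ lies in its domain with $\Lap\Phi_e^\theta=0$, so $S^\theta(t)\Phi_e^\theta=\Phi_e^\theta$. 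As $\Phi_e^\theta\le 1_\Omega$ and the semigroup is order preserving, $\Phi_e^\theta=S^\theta(t)\Phi_e^\theta\le S^\theta(t)1_\Omega$, and letting $t\to\infty$ gives $\Phi_e^\theta\le\Phi^\theta_p$. Combining the two inequalities yields $\Phi_e^\theta=\Phi^\theta_p=:\Phi^\theta$, and the invariance $S^\theta(t)\Phi^\theta=\Phi^\theta$ has already been produced in either fixed-point step.

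The main obstacle is exactly this last invariance of $\Phi_e^\theta$: because $\Omega$ is unbounded and $\Phi_e^\theta$ is bounded but \emph{not} in $L^2(\Omega)$, the $L^2$-based comparison of Theorem \ref{thm:neugeqdir} does not apply directly, and one must control the behaviour at infinity. I expect to handle this through the $BUC_\theta(\Omega)$ formulation, identifying $\Phi_e^\theta$ as an element of the domain of the generator on which it acts as $\Lap$. Alternatively, one can avoid the generator description by sandwiching $S^\theta(t)\Phi_e^\theta$ from above by the compactly modified stationary supersolutions equal to $\phi^\theta_R$ in $\Omega_R$ and to $1$ outside, and from below by $\Phi_e^\theta$ itself as a stationary subsolution, and then pass to the limit $R\to\infty$.
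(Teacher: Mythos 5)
Your opening step is correct and even elegant: the identity $S^\theta(t)\Phi^\theta_p=\Phi^\theta_p$ does follow from the kernel representation, the bound $0\le S^\theta(s)1_\Omega\le 1$ and dominated convergence, and it gives the invariance statement of the proposition directly. Likewise your argument for $\Phi_e^\theta\le\Phi^\theta_p$ (identify $\Phi_e^\theta$ as a stationary element of the $BUC_\theta(\Omega)$ realization of the semigroup, then compare with $1_\Omega$ by order preservation) is essentially the paper's own argument; the paper phrases the invariance via uniqueness of strict solutions for $C^0$ semigroups together with Theorem \ref{thm:propiedades_sg_theta}(v) rather than via the domain of the generator, but the mechanism is the same. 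Note that what you single out as the ``main obstacle'' is therefore the unproblematic half of the proof.

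The genuine gap is in the opposite direction, $\Phi^\theta_p\le\Phi_e^\theta$. To run the elliptic comparison of Theorem \ref{thm:neugeqdirell} on $\Omega_R$ you need $\Phi^\theta_p$ to be harmonic up to $\partial\Omega$, to belong to $H^1_\theta(\Omega_R)$ (in particular to vanish on $\partial^D\Omega$), and to satisfy $B_\theta(\Phi^\theta_p)=0$ in the weak sense; without this the comparison with $\phi^\theta_R$ cannot even be set up, since nothing so far excludes, say, $\Phi^\theta_p>0$ on the Dirichlet part of the boundary. You obtain these properties from ``the smoothing of $S^\theta(t)$ and differentiation of the kernel representation'', but the paper contains no such boundary regularity for the $L^\infty$-extension of the semigroup: Theorem \ref{thm:semigrupo_L2}(v) applies to $L^2(\Omega)$ data and Theorem \ref{thm:propiedades_sg_theta}(v) to $BUC_\theta(\Omega)$ data, and $\Phi^\theta_p$ lies in neither class a priori (it is not in $L^2(\Omega)$ when $N\ge 3$, since it tends to $1$ at infinity; $1_\Omega\notin BUC_\theta(\Omega)$ whenever $\partial^D\Omega\neq\emptyset$; and even the continuity of the pointwise monotone limit $\Phi^\theta_p$ is unknown at this stage). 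Interior harmonicity could be patched --- for instance by testing the invariance against $\varphi\in C_c^\infty(\Omega)$ and invoking Weyl's lemma --- but the boundary condition for $\Phi^\theta_p$ is essentially as hard as the proposition itself, so as written the argument is circular at this point. The paper avoids exactly this difficulty by arguing parabolically: it takes $v_0\in BUC_\theta(\Omega)$ equal to $\phi^\theta_R$ on $\Omega_R$ and to $1$ outside $B(0,R)$, shows $S^\theta(t)v_0\le \phi^\theta_R$ on $\Omega_R$ by the parabolic comparison Theorem \ref{thm:neugeqdir}, and disposes of $w_0=1_\Omega-v_0$ (compactly supported, hence in $L^1(\Omega)$) through the decay estimate (\ref{eqn:ackrn}), obtaining $\Phi^\theta_p\le\phi^\theta_R$ with no regularity of $\Phi^\theta_p$ whatsoever. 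To complete your proof you would need either to adopt this parabolic route, or to prove up-to-the-boundary parabolic smoothing (including preservation of $B_\theta$) for bounded initial data, which is a substantial addition not covered by the paper's toolkit.
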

\begin{proof}
	$\mathbf{(1). \ \Phi_e^\theta\leq\Phi^\theta_p}:$ 
Since,  $\Phi_e^\theta \in BUC_{\theta}(\Omega)\cap  C^{\infty}(\Omega)$
and harmonic, then it is a strict solution of the heat
        equation (\ref{eq:heat_theta}). Therefore, using the uniqueness of strict solutions
        for $C^0$ semigroups (See for example, \cite{pazy} Theorem
        1.3.) and part (v) in Theorem \ref{thm:propiedades_sg_theta},
        we obtain that $S^\theta(t)\Phi_e^\theta =  \Phi_e^\theta$ for
        $t>0$. 
	
	As $S^\theta(t)$ preserves the order (Theorem
        \ref{thm:propiedades_sg_theta}), and $0\leq \Phi_e^\theta\leq
        1$, we have that 
	\begin{displaymath}
		\Phi^\theta_p=\lim_{t\to\infty}
                S^{\theta}(t)1_\Omega\geq\lim_{t\to\infty} S^{\theta}(t) \Phi_e^\theta =
                \Phi_e^\theta. 
	\end{displaymath}

	$\mathbf{(2). \ \Phi^e_\Omega\geq\Phi^p_\Omega}:$ Consider the
        bounded and uniformly continuous initial data in $\Omega$: 
	\begin{displaymath}
0\leq v_0(x)\defeq \left\{
		\begin{aligned}
			& \phi^\theta_{R}(x) \quad \forall x\in\Omega_R \\
			& 1 \quad \forall x\in\mathbb{R}^N\backslash
                        B(0,R) 
		\end{aligned}
		\right. \leq 1  \quad x\in \Omega, 
	\end{displaymath}
and their  evolution in $\Omega$, $v(t)\defeq S^\theta(t) v_0(x)$,
$t>0$. As $v_0\in BUC_{\theta}(\Omega)$, we have that $v$ is continuous up to
$t=0$ and   $0\leq v(t)\leq 1$ in $\Omega$ because the semigroup is
order preserving and of contractions in $L^\infty(\Omega)$.  Then,
restricted to $\Omega_R$, $v$  satisfies  	
	\begin{displaymath}
		\left\{
		\begin{aligned}
			 v_t-\Lap v & =0 \quad && \forall x\in\Omega_R, \ \ \forall t>0 \\
			 v(x,0) & =\phi^\theta_{R} (x) \quad && \forall x\in\Omega_R, \\
			 B_\theta(v)(x,t) & =0 \quad  && \forall x\in \partial \Omega, \ \ \forall t>0 \\
			 v(x,t) & \leq 1 \quad   && \forall \abs{x}=R, \   \forall t>0 . 
		\end{aligned}
		\right. 
	\end{displaymath}	
	Therefore, if we define $\tilde{v}(t)\defeq
        \restr{v(t)}{\Omega_R}-\phi^\theta_{R}$, it satisfies  in $\Omega_R$:
	\begin{displaymath}
		\left\{
		\begin{aligned}
			 \tilde{v}_t-\Lap \tilde{v} & =0 \quad && \forall x\in\Omega_R, \ \ \forall t>0\\
			 \tilde{v}(x,0) & =0 \quad  && \forall x\in\Omega_R \\
			 B_\theta(\tilde{v})(x,t) & =0 \quad  &&\forall x\in \partial \Omega, \ \ \forall t>0 \\
			 \tilde{v}(x,t) & \leq0 \quad  &&\forall \abs{x}=R,  \ \forall t>0
		\end{aligned}
		\right. 
	\end{displaymath}
	and using Theorem \ref{thm:neugeqdir} in the domain
        $\Omega_{R}$, 
        we obtain that $\tilde{v}\leq 0$,
        i.e. $v(t)\leq \phi^\theta_{R}(x)$ in $\Omega_R$. 
	
Now, let us see that $v(t)$ converges to $\Phi_e^\theta$ when $t\to
\infty$. Consider $w_0=1_\Omega-v_0\geq 0$. Then, $w_0$ has compact
support and then $w_0\in L^1(\Omega)$, so  the estimate
(\ref{eqn:ackrn}), gives  $\lim_{t\to\infty}
\norm{S^\theta(t)w_0}_{L^\infty(\Omega)}=0$.
Then in $\Omega_{R}$ we have 
\begin{displaymath}
0\leq S^\theta(t)1_\Omega(x)=S^\theta(t)(v_0+w_0)(x)\leq \phi^\theta_{R}(x)+
S^\theta(t) w_{0} . 
	\end{displaymath}
Therefore, taking the limit $t\to \infty$ we have 
$\Phi^\theta_p(x) =\lim_{t\to\infty} S^\theta(t)1_\Omega(x) \leq
\phi^\theta_{R}(x)$ in $\Omega_R$ for any $R>0$. Hence, taking $R\to
\infty$, $\Phi^\theta_p(x) \leq \Phi_e^\theta(x)$ as we wanted to
prove. 
\end{proof}

\section{The asymptotic mass of the solutions}
\label{sec:asymptoticmass}

Now we will address the main problem of this paper. Given an initial
datum $u_0\in L^1(\Omega)$, we want to know how much mass is lost
through the hole. First, we define the asymptotic mass of a solution: 
\begin{definition}
  For a given exterior domain $\Omega\subset\RN$ and initial datum
  $u_0\in L^1(\Omega)$ and some homogeneous $\theta$-boundary
  conditions,  we define the asymptotic mass of the solution with
  initial data $u_0$ as
\begin{displaymath}
m^{\theta}_{u_0}\defeq \lim_{t\to\infty}\int_\Omega S^\theta(t)u_0(x)dx.
\end{displaymath}
\end{definition}

Now, we prove a result  that characterises  the asymptotic mass of a
solution in terms of the initial datum and the asymptotic profile: 
\begin{proposition}
\label{prop:parabolicremaining}

Let $\Omega\subset \RN$ an exterior domain and $ u_0\in L^1(\Omega)$
an initial datum. Then, if we denote $u(t)= S^{\theta}(t)u_{0}$ the solution of the heat
equation (\ref{eq:heat_theta})  in $\Omega$ with homogenous
$\theta$-boundary conditions  
and the initial datum $u_0$, we have that
\begin{equation}
\label{eqn:asymptotic_mass}
m^{\theta}_{u_0} = \int_\Omega u_0(x) \Phi^\theta                (x)dx 	
\end{equation}
where $\Phi^\theta$ is the asymptotic profile in  $\Omega$.
In particular, $\D \int_\Omega u(t)\Phi^\theta$ is constant in time. 
\end{proposition}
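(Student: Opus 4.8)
The plan is to exploit the self-adjointness of the semigroup, expressed through the duality identity (\ref{eqn:Sexchange}), to transfer the time evolution from $u_0$ onto the constant datum $1_\Omega$, and then to pass to the limit using the very definition of the parabolic profile. First I would write the mass at time $t$ as a pairing against $1_\Omega$,
\begin{displaymath}
\int_\Omega S^\theta(t)u_0(x)\,dx = \int_\Omega 1_\Omega(x)\, S^\theta(t)u_0(x)\,dx,
\end{displaymath}
which is legitimate since $S^\theta(t)$ is a contraction on $L^1(\Omega)$ by Theorem \ref{thm:propiedades_sg_theta}, so that $S^\theta(t)u_0\in L^1(\Omega)$. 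Applying the exchange property (\ref{eqn:Sexchange}) with $p=\infty$, $q=1$, $f=1_\Omega\in L^\infty(\Omega)$ and $g=u_0\in L^1(\Omega)$ then yields
\begin{displaymath}
\int_\Omega S^\theta(t)u_0(x)\,dx = \int_\Omega u_0(x)\, S^\theta(t)1_\Omega(x)\,dx.
\end{displaymath}

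Next I would let $t\to\infty$ in this identity. By Lemma \ref{lemma:defparabolic} the function $S^\theta(t)1_\Omega(x)$ decreases pointwise to $\Phi^\theta_p(x)=\Phi^\theta(x)$ and satisfies $0\le S^\theta(t)1_\Omega\le 1$. Hence the integrand $u_0(x)\,S^\theta(t)1_\Omega(x)$ converges pointwise to $u_0(x)\,\Phi^\theta(x)$ and is dominated, uniformly in $t$, by $\abs{u_0}\in L^1(\Omega)$. The dominated convergence theorem then gives
\begin{displaymath}
m^\theta_{u_0} = \lim_{t\to\infty}\int_\Omega u_0(x)\,S^\theta(t)1_\Omega(x)\,dx = \int_\Omega u_0(x)\,\Phi^\theta(x)\,dx,
\end{displaymath}
which is exactly (\ref{eqn:asymptotic_mass}).

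For the final assertion I would apply the duality identity once more, now with $f=\Phi^\theta\in L^\infty(\Omega)$ and $g=u_0\in L^1(\Omega)$, together with the invariance $S^\theta(t)\Phi^\theta=\Phi^\theta$ established in Proposition \ref{prop:profilesareequal}:
\begin{displaymath}
\int_\Omega u(t)\,\Phi^\theta = \int_\Omega S^\theta(t)u_0\,\Phi^\theta = \int_\Omega u_0\, S^\theta(t)\Phi^\theta = \int_\Omega u_0\,\Phi^\theta,
\end{displaymath}
so that this quantity is independent of $t$ and equals $m^\theta_{u_0}$.

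I do not expect a genuine obstacle here: the argument is essentially the self-adjoint swap followed by dominated convergence. The only points requiring care are verifying that the pairing $\int_\Omega 1_\Omega\,S^\theta(t)u_0$ is finite and that (\ref{eqn:Sexchange}) is applicable in the $L^1$--$L^\infty$ duality, both of which follow from the contraction and duality statements of Theorem \ref{thm:propiedades_sg_theta}, and confirming the uniform $L^1$ domination needed to interchange limit and integral, which is immediate from the bound $0\le S^\theta(t)1_\Omega\le 1$.
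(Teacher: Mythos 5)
Your proposal is correct and follows essentially the same argument as the paper: the self-adjoint exchange identity (\ref{eqn:Sexchange}) to move $S^\theta(t)$ onto $1_\Omega$, dominated convergence with the bound $0\le S^\theta(t)1_\Omega\le 1$ and the definition (\ref{eqn:defparp}), and the invariance $S^\theta(t)\Phi^\theta=\Phi^\theta$ for the conservation statement. The only cosmetic difference is that the paper proves constancy via the semigroup step $\int_\Omega u(t+s)\Phi^\theta=\int_\Omega u(t)\Phi^\theta$, whereas you show directly that $\int_\Omega u(t)\Phi^\theta=\int_\Omega u_0\Phi^\theta$ for every $t$, which is equivalent.
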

\begin{proof}
  With the notations above, we have, 
\begin{displaymath}	
\label{eqn:asymptmass}	
m^{\theta}_{u_0}=\lim_{t\to \infty} \int_\Omega S^{\theta}(t)u_0(x)
1_\Omega(x)dx\myeq{(\ref{eqn:Sexchange})}\lim_{t\to \infty}
\int_\Omega u_0(x)
S^{\theta}(t)1_\Omega(x)dx\myeq{(\ref{eqn:defparp})} 
\int_\Omega u_0(x) \Phi^\theta (x)dx, 
\end{displaymath}
where we have used the dominated convergence theorem in the last step
as $\abs{u_0S^{\theta}(t)1_\Omega}\leq \abs{u_0}$ which is integrable. 
		
To see that $\D \int_\Omega u(t)\Phi^\theta$ is a
conserved quantity, we just use that, for $t, s>0$, 
\begin{displaymath}
\int_\Omega u(t+s) \Phi^\theta = \int_\Omega  S^\theta(s)u(t) 
\Phi^\theta   \myeq{(\ref{eqn:Sexchange})}  \int_\Omega   u(t) S^\theta(s)\Phi^{\theta}=
\int_\Omega u(t) \Phi^{\theta}
\end{displaymath}
\end{proof}

\begin{remark}
The profile  $\Phi^\theta$ provides an explicit computation of the
amount of mass lost for any solution. It also has an interesting
interpretation in terms of initial point masses. Actually, since, 
  \begin{displaymath}
    S^{\theta}(t)1_\Omega(x)=\int_\Omega k^\theta(x,y,t)1_\Omega(y)dy , 
  \end{displaymath}
 using that  $k^\theta$ is spatially symmetric, see Theorem
 \ref{thm:propiedades_sg_theta}, 
  \begin{displaymath}
    S^{\theta}(t)1_\Omega(x)=\int_\Omega k^\theta(y,x,t)dy=\norm{S^{\theta}(t)\delta(\cdot-x)}_{L^1(\Omega)},
  \end{displaymath}
  where $\delta$ is the Dirac distribution. Therefore,
  \begin{displaymath}
    \Phi^\theta(x)=\lim_{t\to\infty}S^{\theta}(t)1_\Omega(x)=\lim_{t\to\infty}\norm{S^{\theta}(t)\delta(\cdot-x)}_{L^1(\Omega)},
  \end{displaymath}
  which is the remaining mass for a point source in $x$ as initial
  condition. Therefore (\ref{eqn:asymptotic_mass}) reflects the
  contribution of a mass density $u_{0}(x)$ at each $x\in \Omega$ in the
  total remaining mass. 
  \begin{reserva2}
    This can be done rigorously extending the definition of the
    semigroup to totally bounded variation measures (See for example
    \cite{aroberrobinson1}). However, this is something we will not
    need through the paper and we omit it to not to extend so much.
  \end{reserva2}
\end{remark}

Now we give some estimates on the profile $\Phi^{\theta}$. In
particular, we prove that for $N=1, 2$, $\Phi^{\theta} \equiv 0$ while if
$N\geq 3$ then $\Phi^{\theta} \not \equiv 0$ and it actually converge to
$1$ as $|x| \to \infty$. Therefore in two dimension all the mass of
every solution is lost through the hole, while for higher dimensions
there is always some remaining mass.

First, the following result  allows us to compare the asymptotic
profiles for different boundary conditions: 
\begin{proposition}
\label{prop:compasym}

Let $\Omega\subset\RN$ be an exterior domain and $\Phi^{\theta_1}$,
$\Phi^{\theta_2}$ asymptotic profiles in  $\Omega$ for different
$\theta$-boundary conditions. Then, if $0\leq \theta_1\leq
\theta_2\leq 1$ we have that
\begin{displaymath}
\Phi^{\theta_1}\leq \Phi^{\theta_2} \quad \mbox{in $\Omega$}. 
\end{displaymath}

In particular, if $0\leq u_{0} \in L^{1}(\Omega)$ and $0\leq \theta_1\leq
\theta_2\leq 1$ the asymptotic masses satisfy
\begin{displaymath}
  m^{\theta_{1}}_{u_0} \leq m^{\theta_{2}}_{u_0} . 
\end{displaymath}

\end{proposition}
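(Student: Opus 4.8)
The plan is to leverage the monotonicity of the parabolic semigroup with respect to the boundary parameter $\theta$, which was already established in Theorem \ref{thm:neugeqdir2}, and then pass to the limit as $t \to \infty$ in the definition of the parabolic profile. Since Proposition \ref{prop:profilesareequal} identifies the elliptic and parabolic profiles, I am free to work with whichever characterization is more convenient; the parabolic one is the natural choice here because the comparison of semigroups is exactly what Theorem \ref{thm:neugeqdir2} provides.

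First I would recall that, by Lemma \ref{lemma:defparabolic} and the definition \eqref{eqn:defparp}, the profile is the pointwise monotone limit
\begin{displaymath}
  \Phi^{\theta_i}(x) = \lim_{t\to\infty} S^{\theta_i}(t)1_\Omega(x), \qquad i=1,2.
\end{displaymath}
Now apply Theorem \ref{thm:neugeqdir2} with the nonnegative initial datum $u_0 = 1_\Omega \in L^\infty(\Omega)$: since $0 \leq \theta_1 \leq \theta_2 \leq 1$, we obtain
\begin{displaymath}
  S^{\theta_1}(t)1_\Omega(x) \leq S^{\theta_2}(t)1_\Omega(x) \qquad \forall x \in \Omega, \ t>0.
\end{displaymath}
Taking the limit $t \to \infty$ on both sides, which exists by Lemma \ref{lemma:defparabolic}, the inequality is preserved and yields $\Phi^{\theta_1} \leq \Phi^{\theta_2}$ in $\Omega$, proving the first assertion.

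For the second assertion, about the asymptotic masses, I would invoke the representation formula \eqref{eqn:asymptotic_mass} from Proposition \ref{prop:parabolicremaining}, namely $m^{\theta_i}_{u_0} = \int_\Omega u_0(x)\,\Phi^{\theta_i}(x)\,dx$. Since $u_0 \geq 0$ and we have just shown $\Phi^{\theta_1}(x) \leq \Phi^{\theta_2}(x)$ pointwise, multiplying by $u_0(x) \geq 0$ and integrating over $\Omega$ preserves the inequality, giving $m^{\theta_1}_{u_0} \leq m^{\theta_2}_{u_0}$ directly.

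**The main obstacle**, if any, is purely bookkeeping rather than conceptual: one must ensure that $1_\Omega$ is an admissible datum for the semigroup comparison in Theorem \ref{thm:neugeqdir2} (it is, as $1_\Omega \in L^\infty(\Omega)$ with $p=\infty$ permitted there) and that the pointwise monotone limits are legitimately interchanged with the inequality (trivial, since nonstrict inequalities pass to limits). The result is essentially immediate once the earlier monotonicity machinery and the mass representation formula are in hand, so I expect no genuine difficulty.
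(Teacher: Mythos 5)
Your proof is correct and follows essentially the same route as the paper: the paper likewise obtains $\Phi^{\theta_1}\leq\Phi^{\theta_2}$ by applying Theorem \ref{thm:neugeqdir2} to the datum $1_\Omega$ and passing to the limit in the definition (\ref{eqn:defparp}) of the parabolic profile, and then treats the mass comparison as immediate from Proposition \ref{prop:parabolicremaining}. Your write-up merely spells out the details (admissibility of $1_\Omega$ with $p=\infty$, preservation of inequalities in the limit, and the integration step) that the paper leaves implicit.
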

\begin{proof}
This  is just a consequence of Theorem \ref{thm:neugeqdir2}: 
\begin{displaymath}
\Phi^{\theta_1} \myeq{(\ref{eqn:defparp})} \lim_{t\to\infty}
\int_\Omega S^{\theta_1}(t)1_\Omega \myleq{Thm \ref{thm:neugeqdir2}}
\lim_{t\to\infty} \int_\Omega S^{\theta_2}(t)1_\Omega
\myeq{(\ref{eqn:defparp})} \Phi^{\theta_2}.
\end{displaymath}

The rest is immediate. 
\end{proof}

In the same way, if instead of changing the boundary conditions, we
change the domain, for homogeneous Dirichlet boundary conditions we
have the following comparison result.

\begin{proposition}
\label{prop:compellprof}
Let $\Omega_1 \subset \Omega_2\subset\RN$ be two exterior
domains and consider Dirichlet boundary conditions, that is $\theta
=0$, in both of them.  Then we have that their  asymptotic
profiles satisfy:
\begin{displaymath}
\Phi^{0}_{\Omega_1}(x)\leq \Phi^{0}_{\Omega_2}(x) \qquad \forall x\in \Omega_1.
\end{displaymath}
\end{proposition}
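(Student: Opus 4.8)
The plan is to use the elliptic construction of the profile from Lemma \ref{lemma:defharmonicprof}, compare the finite-$R$ approximations $\phi^0_R$ of the two domains on their common region, and then pass to the limit $R\to\infty$. Throughout, with Dirichlet conditions $\theta\equiv 0$ the boundary operator is $B_0(u)=u$, so the approximating problem (\ref{eqn:constphie}) reduces to a pure Dirichlet problem: $\phi^0_{R}$ is harmonic in $\Omega\cap B(0,R)$, vanishes on $\partial\Omega$, and equals $1$ on $\{|x|=R\}$.

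First I would record the geometric consequence of the inclusion. Writing $\Omega_i=\RN\setminus\hole_i$, the hypothesis $\Omega_1\subset\Omega_2$ is equivalent to $\hole_2\subset\hole_1$, i.e. $\Omega_2$ has the smaller hole, matching the expectation that a larger hole loses more mass. Since $\Omega_1\subset\Omega_2$ we have $\adh{\Omega_1}\subset\adh{\Omega_2}$ and hence $\partial\Omega_1\subset\adh{\Omega_2}$; fixing $R$ large enough that both holes lie inside $B(0,R)$, we also get $\Omega_{1,R}\defeq\Omega_1\cap B(0,R)\subset \Omega_2\cap B(0,R)=\Omega_{2,R}$. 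For any fixed $x\in\Omega_1$ we have $x\in\Omega_{1,R}$ for all large $R$, so the pointwise statements below are meaningful.

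The core step is a comparison on $\Omega_{1,R}$. Let $\phi^0_{R,\Omega_1}$ and $\phi^0_{R,\Omega_2}$ solve (\ref{eqn:constphie}) for the two domains, and restrict $\phi^0_{R,\Omega_2}$ to $\Omega_{1,R}$. Both functions are harmonic in $\Omega_{1,R}$ (the latter because $\Omega_{1,R}\subset\Omega_{2,R}$, where it is harmonic), so I would compare them through their boundary data on $\partial\Omega_{1,R}=\partial\Omega_1\cup(\{|x|=R\}\cap\adh{\Omega_1})$. On the outer sphere $\{|x|=R\}$ both equal $1$; on $\partial\Omega_1$ one has $\phi^0_{R,\Omega_1}=0$, while $\phi^0_{R,\Omega_2}\geq 0$ there (recall $0\le\phi^0_{R,\Omega_2}\le 1$, and $\partial\Omega_1\subset\adh{\Omega_2}$, so this restriction is well defined and continuous up to the boundary). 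Thus the boundary data of $\phi^0_{R,\Omega_1}$ is dominated by that of $\phi^0_{R,\Omega_2}$; applying Theorem \ref{thm:neugeqdirell} in the bounded domain $\Omega_{1,R}$ with $\gamma=0$ and pure Dirichlet conditions ($\theta\equiv 0$, $f_1=f_2=0$) yields $\phi^0_{R,\Omega_1}\le\phi^0_{R,\Omega_2}$ on $\Omega_{1,R}$.

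Finally, I would let $R\to\infty$. By Lemma \ref{lemma:defharmonicprof} and Proposition \ref{prop:profilesareequal}, $\phi^0_{R,\Omega_i}(x)\to\Phi^0_{\Omega_i}(x)$ for each fixed $x$, so the inequality is preserved in the limit, giving $\Phi^0_{\Omega_1}(x)\le\Phi^0_{\Omega_2}(x)$ for all $x\in\Omega_1$. The main obstacle is purely the boundary bookkeeping in the comparison step: one must verify that the restriction of $\phi^0_{R,\Omega_2}$ is an admissible comparison function on $\Omega_{1,R}$ — that $\partial\Omega_1$ genuinely lies in $\adh{\Omega_2}$, where $\phi^0_{R,\Omega_2}$ is defined, nonnegative, and continuous up to the boundary — and that the extra boundary piece only helps the inequality, so that nothing goes wrong from the two domains having honestly different boundaries.
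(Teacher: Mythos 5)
Your proof is correct and follows essentially the same route as the paper: compare the finite-$R$ Dirichlet approximations on $\Omega_1\cap B(0,R)$ — both harmonic there, equal to $1$ on the outer sphere, with $\phi^0_{R,\Omega_1}=0\leq\phi^0_{R,\Omega_2}$ on $\partial\Omega_1$ — apply Theorem \ref{thm:neugeqdirell}, and let $R\to\infty$. The boundary bookkeeping you spell out (that $\partial\Omega_1\subset\adh{\Omega_2}$ so the restriction is admissible) is exactly what the paper's proof leaves implicit.
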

\begin{proof}
With the  notation  in (\ref{eqn:constphie}), we have that
  $\phi^0_{(\Omega_2)_R}\geq 0$ in $\partial \Omega_1$, while 
  $\phi^0_{(\Omega_1)_R}= 0$ on $\partial \Omega_1$.  Therefore, if we
  denote $v\defeq\phi^0_{(\Omega_1)_R}-\phi^0_{(\Omega_2)_R}$ we have
  that:
\begin{displaymath}
\left\{
\begin{aligned}
-\Lap & v(x)=0 \ \ \ \forall x\in (\Omega_1)_R \\
& v(x) \leq 0 \ \ \ \forall x\in \partial(\Omega_1)_R . 
\end{aligned}
\right. 
\end{displaymath}
Then, using Theorem \ref{thm:neugeqdirell}, we obtain that $v\leq 0$ in
$(\Omega_1)_R$, or equivalently
$\phi^0_{(\Omega_1)_R}\leq\phi^0_{(\Omega_2)_R}$ in
$(\Omega_1)_R$. Taking the limit when $R\to \infty$ we have
$\Phi^0_{\Omega_1}\leq \Phi^0_{\Omega_2}$ in $\Omega_1$.
\end{proof}

Now, we will explore briefly what the form of the asymptotic profile
$\Phi^\theta$ is. For  this, we will firstly study the case when the
domain is for the complement of a ball
$D_r=\mathbb{R}^N\backslash B(0,r)$ and  $\theta$ is a constant.

\begin{lemma}
\label{lemma:prelimit}

Let $D_r^R =B(0,R)\backslash B(0,r)\subset \mathbb{R}^N$ and
$0\leq \theta <1$ a constant. Then the  solution to the problem
\begin{displaymath}
\label{eqn:lemmaeq1}
\left\{
\begin{aligned}
- \Lap  \phi (x) & =0 \ \ \ &&\forall x\in \Omega_R \\
B_\theta(\phi)(x) & = 0 \ \ \ &&\forall \abs{x}=r \\
\phi(x) & = 1 \ \ \ &&\forall \abs{x}=R \\
\end{aligned}
\right.
\end{displaymath}
is given by 
\begin{displaymath}
 \Phi_{r,R}(x)=\left\{
\begin{array}{ll}
\frac{\abs{x}-C_\theta r}{R-C_\theta r} & N=1 \\
\frac{\log(\abs{x})-C_\theta\log(r)}{\log(R) -C_\theta\log(r)} & N=2 \\
\frac{\abs{x}^{2-N} - C_\theta  r^{2-N}}{R^{2-N}-C_\theta r^{2-N}} &  N\geq 3 , 
\end{array} 
\right. 
\end{displaymath}
where 
\begin{equation*}
C_\theta \defeq \left\{
\begin{array}{ll}
1+\tan(\pi\theta/2)  & N\leq 2 \\
1+(N-2)\tan(\pi\theta/2) & N\geq 3 . 
\end{array} 
\right. 
\end{equation*}

If $\theta=1$, then $\Phi_{r,R}\equiv 1$.
\end{lemma}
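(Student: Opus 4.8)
The plan is to exploit the rotational symmetry of the annulus $D_r^R$, together with uniqueness, to reduce the boundary value problem to an ordinary differential equation that can be integrated explicitly. Since $D_r^R$ is a bounded regular domain the problem has a unique solution (as already noted for this domain; alternatively this follows from the comparison Theorem~\ref{thm:neugeqdirell}, applicable here since $\gamma=0$ but $\theta\not\equiv 1$). Now the domain $D_r^R$, the equation $-\Lap\phi=0$, the constant $\theta$, and both pieces of boundary data — the value $1$ on $\abs{x}=R$ and $B_\theta(\phi)=0$ on $\abs{x}=r$ — are all invariant under every orthogonal transformation $\mathcal{O}$ of $\RN$. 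Hence for each such $\mathcal{O}$ the function $\phi\circ\mathcal{O}$ solves the very same problem, and by uniqueness $\phi=\phi\circ\mathcal{O}$. Therefore $\phi$ is radial, $\phi(x)=\psi(\abs{x})$ for some $\psi:[r,R]\to\R$.

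First I would translate harmonicity into the radial equation $\psi''(s)+\frac{N-1}{s}\psi'(s)=0$ on $(r,R)$. Integrating twice gives the general radial harmonic function as $a+b\,g(s)$, where the nonconstant mode is $g(s)=s$ for $N=1$, $g(s)=\log s$ for $N=2$, and $g(s)=s^{2-N}$ for $N\geq 3$. These are exactly the three functional shapes appearing in the statement, so it only remains to pin down the two constants $a,b$ from the two boundary conditions.

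Next I would impose those conditions. The outer one is simply $\psi(R)=1$. For the inner one, the outward unit normal of $\Omega=\RN\setminus B(0,r)$ on $\{\abs{x}=r\}$ points towards the origin, so $\frac{\partial\phi}{\partial n}=-\psi'(r)$, and $B_\theta(\phi)=0$ becomes
\[ -\sin(\tfrac{\pi}{2}\theta)\,\psi'(r)+\cos(\tfrac{\pi}{2}\theta)\,\psi(r)=0. \]
For $0\leq\theta<1$ one has $\cos(\tfrac{\pi}{2}\theta)>0$, so this is a genuine Robin (or, for $\theta=0$, Dirichlet) condition and yields a second linear equation in $a,b$. Solving the resulting $2\times 2$ linear system and rearranging the answer into the form $\psi(s)=\frac{g(s)-C_\theta\,g(r)}{g(R)-C_\theta\,g(r)}$ produces the claimed formulas, the constant $C_\theta$ being exactly the combination of $\tan(\tfrac{\pi}{2}\theta)$ forced by the inner condition; the two expressions for $C_\theta$ arise because $g$ and $g'$ scale differently in the regime $N\leq 2$ versus $N\geq 3$. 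I expect this inner-boundary bookkeeping — fixing the sign of the normal derivative and carrying the algebra through to isolate $C_\theta$ — to be the only real obstacle, since the symmetry reduction and the ODE integration are routine.

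Finally, the case $\theta=1$ is degenerate and I would dispatch it directly: then $\sin(\tfrac{\pi}{2}\theta)=1$ and $\cos(\tfrac{\pi}{2}\theta)=0$, so the inner condition collapses to the no-flux (Neumann) condition $\psi'(r)=0$. In every dimension the nonconstant mode has $g'$ a nonzero multiple of $1$, $1/s$, or $s^{1-N}$, all nonvanishing on $(0,\infty)$, so $\psi'(r)=0$ forces $b=0$; then $\psi(R)=1$ gives $\psi\equiv 1$, that is $\Phi_{r,R}\equiv 1$, as stated.
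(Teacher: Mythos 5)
Your method is sound and it is precisely the paper's own: the published proof simply writes the radial ODE, notes that its general solution is $a+b\,g(s)$ with $g(s)=s,\ \log s,\ s^{2-N}$ according to dimension, and says ``choosing the constants to fit the boundary conditions we get the result.'' Your write-up is in fact more careful than the paper's on the two points it leaves implicit: radiality (via uniqueness plus rotation invariance, and uniqueness is legitimately available from Theorem \ref{thm:neugeqdirell} because the outer sphere carries Dirichlet data, so the boundary function is $\not\equiv 1$ on $\partial\Omega_R$ even when $\theta=1$ on the hole), and the sign of the inner normal derivative, $\frac{\partial\phi}{\partial n}=-\psi'(r)$, which is indeed the paper's convention (it is the one that makes $\Lap_\theta$ dissipative in the proof of Theorem \ref{thm:semigrupo_L2}). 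Your treatment of $\theta=1$ (the Neumann condition forces $b=0$, hence $\psi\equiv 1$) is also correct.

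However, the single step you declined to carry out --- ``solving the resulting $2\times 2$ linear system \ldots\ produces the claimed formulas'' --- is exactly where the lemma, as printed, fails, so the proposal as written asserts an identity that the algebra does not deliver. With your (correct) inner condition $\psi(r)=\tan(\tfrac{\pi}{2}\theta)\,\psi'(r)$ and $\psi(s)=a+b\,s^{2-N}$ for $N\geq 3$, eliminating $a$ gives $a=-b\,r^{2-N}\bigl(1+\frac{N-2}{r}\tan(\tfrac{\pi}{2}\theta)\bigr)$, i.e.
\begin{displaymath}
C_\theta=1+\frac{(N-2)\tan(\frac{\pi}{2}\theta)}{r}\,,
\end{displaymath}
not $1+(N-2)\tan(\frac{\pi}{2}\theta)$. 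This is also forced by scaling: rescaling to a unit hole radius turns the Robin coefficient $\cot(\frac{\pi}{2}\theta)$ into $r\cot(\frac{\pi}{2}\theta)$, so $\tan(\frac{\pi}{2}\theta)$ can only enter the answer divided by $r$; the printed constant is correct only when $r=1$. For $N=2$ the mismatch is worse: at $r=1$ the true solution is $\psi(s)=\frac{\log s+\tan(\pi\theta/2)}{\log R+\tan(\pi\theta/2)}$, which the printed form cannot represent at all (it degenerates to $\frac{\log s}{\log R}$, independent of $\theta$). So carrying your computation to the end yields a corrected lemma rather than the lemma as stated; this is an erratum of the paper (whose own proof glosses over the same algebra), and it is harmless downstream, since Lemma \ref{lemma:ellball}, Proposition \ref{prop:dirasymppro} and Theorem \ref{thm:compperf} only use the Dirichlet value $C_0=1$, the inequality $C_\theta\geq 1$, and the vanishing of the $R\to\infty$ limit when $N\leq 2$, all of which survive with the corrected constants. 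To make your proof complete, do the elimination explicitly and state the constant you actually obtain.
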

\begin{proof}
A radial harmonic  function satisfies 
  \begin{displaymath}
\frac{1}{r^{N-1}}\frac{\partial}{\partial r}\left(r^{N-1}\frac{\partial f}{\partial r}\right)=0,
\end{displaymath}
and then  when $N\geq 3$,  $f(r)=\frac{A}{r^{N-2}}+B$. Choosing the
constants to fit the boundary conditions we get the result. The cases
$N=1,2$ follow with slight changes. 
\end{proof}

Now, letting $R\to \infty$,  we get the explicit form of the asymptotic profile. 
\begin{lemma}
  \label{lemma:ellball}
  
Let $0\leq \theta\leq 1$ be a constant. The asymptotic profile for
the exterior domain $D_r\defeq \RN\backslash \adh{B(0,r)}$ with
$\theta$-boundary conditions  has the following
explicit form: 
  \begin{displaymath}
\Phi^{\theta}(x)=\left\{
\begin{array}{lll}
0 & \theta\neq 1 \ \ N\leq 2  \\
1-\frac{r^{N-2}}{C_\theta\abs{x}^{N-2}} & \theta\neq 1  \ \ N\geq 3 \\
1 & \theta= 1 \ \mbox{in any dimension}
\end{array} 
\right. \quad x \in D_r\defeq \RN\backslash \adh{B(0,r)}
\end{displaymath}
	where $C_\theta$ is as in Lemma \ref{lemma:prelimit}.
\end{lemma}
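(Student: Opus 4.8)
The plan is to compute the asymptotic profile directly as the limit $R \to \infty$ of the explicit solutions $\Phi_{r,R}$ obtained in Lemma \ref{lemma:prelimit}, invoking Proposition \ref{prop:profilesareequal} (which identifies $\Phi^\theta$ with the elliptic profile $\Phi_e^\theta = \lim_{R\to\infty}\phi_R^\theta$) together with the monotone convergence guaranteed by Lemma \ref{lemma:defharmonicprof}. First I would dispose of the Neumann case $\theta = 1$: Lemma \ref{lemma:prelimit} already gives $\Phi_{r,R} \equiv 1$ for every $R$, so the limit is $1$, consistent with the general fact that $\Phi^\theta \equiv 1$ for Neumann conditions. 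For $\theta \neq 1$ the entire task reduces to taking the pointwise limit of the three explicit formulas in $R$.

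For the main computation I would treat the three dimensional regimes separately, holding $x \in D_r$ fixed. In the case $N \geq 3$, I write
\begin{displaymath}
\Phi_{r,R}(x) = \frac{\abs{x}^{2-N} - C_\theta r^{2-N}}{R^{2-N} - C_\theta r^{2-N}},
\end{displaymath}
and since $2-N < 0$ we have $R^{2-N} \to 0$ as $R \to \infty$, so the denominator tends to $-C_\theta r^{2-N}$ while the numerator is fixed. Thus
\begin{displaymath}
\Phi^\theta(x) = \frac{\abs{x}^{2-N} - C_\theta r^{2-N}}{-C_\theta r^{2-N}} = 1 - \frac{\abs{x}^{2-N}}{C_\theta r^{2-N}} = 1 - \frac{r^{N-2}}{C_\theta \abs{x}^{N-2}},
\end{displaymath}
which is exactly the claimed formula. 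For $N = 2$ I use $\Phi_{r,R}(x) = \frac{\log\abs{x} - C_\theta \log r}{\log R - C_\theta \log r}$; the numerator is fixed while $\log R \to +\infty$, so the quotient tends to $0$. For $N = 1$ the formula $\frac{\abs{x} - C_\theta r}{R - C_\theta r}$ likewise has a bounded numerator and a denominator growing like $R$, hence the limit is $0$. This establishes $\Phi^\theta \equiv 0$ for $\theta \neq 1$, $N \leq 2$.

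The only genuinely nontrivial point is the justification that this $R \to \infty$ limit of the explicit annular solutions really is the asymptotic profile $\Phi^\theta$ for $D_r$, rather than merely a formal limit. Here I would appeal to the comparison-based construction: by definition $\phi_R^\theta$ solves the exterior-ball version of (\ref{eqn:constphie}) on $(D_r)_R = B(0,R) \setminus \adh{B(0,r)}$, and Lemma \ref{lemma:prelimit} identifies $\phi_R^\theta$ with $\Phi_{r,R}$ exactly, since both are the unique harmonic functions with the stated boundary data. The monotone decreasing limit is then $\Phi_e^\theta$ by Lemma \ref{lemma:defharmonicprof}, and $\Phi_e^\theta = \Phi^\theta$ by Proposition \ref{prop:profilesareequal}. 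I expect the main (and essentially only) obstacle to be this identification step rather than the limits themselves, which are elementary; the computation of the limits is routine once the explicit forms are in hand. A minor point worth checking is that $C_\theta > 0$ for $0 \leq \theta < 1$, so that the formulas are well defined and the resulting profile indeed lies in $[0,1]$; since $\tan(\pi\theta/2) \geq 0$ on this range, $C_\theta \geq 1$, and one verifies $1 - \frac{r^{N-2}}{C_\theta\abs{x}^{N-2}} \in [0,1)$ for $\abs{x} > r$.
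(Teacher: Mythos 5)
Your proposal is correct and follows exactly the paper's route: the paper gives no written proof for this lemma beyond the sentence ``Now, letting $R\to\infty$, we get the explicit form of the asymptotic profile,'' i.e.\ it takes the pointwise limit of the explicit solutions $\Phi_{r,R}$ from Lemma \ref{lemma:prelimit}, with the identification of that limit as $\Phi^\theta$ resting on Lemma \ref{lemma:defharmonicprof} and Proposition \ref{prop:profilesareequal}, just as you argue. If anything, your write-up is more careful than the paper's, since you make the uniqueness-based identification $\phi_R^\theta = \Phi_{r,R}$ and the positivity $C_\theta \geq 1$ explicit.
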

As we see, if $N\leq 2$, the asymptotic profile is the zero function
for Dirichlet or Robin conditions. Furthermore, for $N\geq 3$ the
asymptotic profile tends to $1$ when $\abs{x}\to \infty$. This also
happens for an arbitrary exterior domain and general $\theta$-boundary
conditions, as we will show below. 

First, for a general exterior domain, we analyse the particular case
of homogeneous Dirichlet boundary conditions: 
\begin{proposition}
\label{prop:dirasymppro}
Let $\Omega =\mathbb{R}^N\backslash \hole$ be an exterior domain with
Dirichlet boundary conditions, that is, $\theta \equiv 0$.   Then
the asymptotic  profile satisfies:
\begin{enumerate}
\item
  $\Phi^0=0$ if $N\leq 2$.

\item
  $\Phi^0\geq 0$  and $\Phi^{0}(x)\to 1$ when
  $\abs{x}\to \infty$ if $N\geq 3$. In fact, there exists a constant
  $C>0$ depending on $\Omega$ such that:
\begin{displaymath}
1-\frac{C}{\abs{x}^{N-2}}\leq \Phi^{0}(x) \leq 1 \qquad \forall x\in \Omega.
\end{displaymath}
\end{enumerate}
\end{proposition}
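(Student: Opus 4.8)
The plan is to sandwich the general exterior domain $\Omega$ between the complements of two balls and then invoke the domain-monotonicity of the Dirichlet profile together with the explicit radial profiles already computed. Since $\hole$ is compact with $0\in\mathring{\hole}$, I can choose radii $0<r_1<r_2$ with $\overline{B(0,r_1)}\subset\mathring{\hole}$ and $\hole\subset B(0,r_2)$. Writing $D_r=\RN\setminus\overline{B(0,r)}$ as in Lemma \ref{lemma:ellball}, these inclusions translate into
\begin{displaymath}
D_{r_2}\subset\Omega\subset D_{r_1},
\end{displaymath}
because a smaller hole gives a larger exterior domain. Proposition \ref{prop:compellprof} then yields $\Phi^0_{D_{r_2}}(x)\leq\Phi^0_{\Omega}(x)$ for $x\in D_{r_2}$ and $\Phi^0_{\Omega}(x)\leq\Phi^0_{D_{r_1}}(x)$ for $x\in\Omega$, while Lemma \ref{lemma:ellball} (with $\theta=0$, so $C_0=1$) gives the explicit values of $\Phi^0_{D_{r_1}}$ and $\Phi^0_{D_{r_2}}$.

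For part (i), $N\leq 2$, Lemma \ref{lemma:ellball} gives $\Phi^0_{D_{r_1}}\equiv 0$. Combining this with $0\leq\Phi^0_{\Omega}$ (which holds by construction, see Lemma \ref{lemma:defparabolic}) and the upper comparison $\Phi^0_{\Omega}\leq\Phi^0_{D_{r_1}}=0$ on $\Omega$ forces $\Phi^0_{\Omega}\equiv 0$, as claimed.

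For part (ii), $N\geq 3$, the upper bound $\Phi^0_{\Omega}\leq 1$ is immediate from the construction of the profile (Lemma \ref{lemma:defparabolic}). For the lower bound, Lemma \ref{lemma:ellball} gives $\Phi^0_{D_{r_2}}(x)=1-r_2^{N-2}/\abs{x}^{N-2}$, so the lower comparison yields
\begin{displaymath}
\Phi^0_{\Omega}(x)\geq 1-\frac{r_2^{N-2}}{\abs{x}^{N-2}}\qquad\text{for } x\in D_{r_2}.
\end{displaymath}
The one point requiring care, and the main (indeed only) obstacle, is that this comparison holds only on $D_{r_2}$, whereas the statement asks for a bound valid on all of $\Omega$. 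On the remaining bounded piece $\Omega\cap\overline{B(0,r_2)}$ one has $\abs{x}\leq r_2$, hence $1-r_2^{N-2}/\abs{x}^{N-2}\leq 0\leq\Phi^0_{\Omega}(x)$, so the same inequality holds there trivially by nonnegativity of the profile. Taking $C=r_2^{N-2}$ therefore gives $1-C/\abs{x}^{N-2}\leq\Phi^0_{\Omega}(x)\leq 1$ on all of $\Omega$, and since $N\geq 3$ the left-hand side tends to $1$ as $\abs{x}\to\infty$, yielding $\Phi^0_{\Omega}(x)\to 1$.
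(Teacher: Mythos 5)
Your proof is correct and follows essentially the same route as the paper: sandwiching $\Omega$ between complements of balls, applying the Dirichlet domain-monotonicity of Proposition \ref{prop:compellprof}, and reading off the explicit radial profiles from Lemma \ref{lemma:ellball} (with $C_0=1$). Your only refinement is making explicit why the lower bound extends from $D_{r_2}$ to all of $\Omega$ (the left-hand side is nonpositive on $\Omega\cap\overline{B(0,r_2)}$), which the paper dispatches with the phrase ``taking a larger constant.''
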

\begin{proof}
 Let us take  $r>0$ such that $B(0,r)\subset \hole$, which implies  $\Omega
 \subset D_r=\RN\backslash B(0,r)$. Then, using Proposition 
 \ref{prop:compellprof}, we obtain that $\Phi^0_{\Omega}\leq
 \Phi^0_{D_r}$ in $\Omega$. This  automatically implies that, for
 $N\leq 2$, $\Phi^0_{\Omega}\equiv  0$.

 Now for $N\geq 3$  choose $r>0$ such that $\hole \subset B(0,r)$, which implies 
 $D_r\subset \Omega$. Then Proposition \ref{prop:compellprof} gives now 
 $\Phi^0_{D_r} \leq \Phi^0_{\Omega}\leq 1$ in $D_r$ and by 
 Lemma \ref{lemma:ellball} we get the estimate in $D_{r}$. Taking a
 larger constant, we get the estimate in $\Omega$. 
 \end{proof}

Now for other $\theta$-boundary conditions we have the following. 

\begin{theorem}
\label{thm:compperf}

Assume $0\leq \theta \leq 1$, not necessarily constant. Then, if $\partial \Omega$ and $\theta$ are sufficiently regular, the asymptotic profile
$\Phi^{\theta}$ satisfies: 
\begin{enumerate}
\item
  If $N\geq 3$,
\begin{displaymath}
1-\frac{C}{\abs{x}^{N-2}}\leq \Phi^{\theta}(x) \leq 1 \qquad \forall x\in \Omega.
\end{displaymath}

\item
  If $N\leq 2$ and $\theta\not \equiv 1$ then $\Phi^{\theta}\equiv 0$.
\end{enumerate}

If $\theta\equiv 1$, that is, for Neumann boundary conditions,  then
$\Phi^{1}\equiv 1$ in any dimensions. 

\end{theorem}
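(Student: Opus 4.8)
The plan is to treat the three assertions in increasing order of difficulty, reducing the first two to results already established and reserving the real work for the low-dimensional Robin/mixed case. For Neumann conditions ($\theta\equiv 1$) I would argue directly from the elliptic construction (\ref{eqn:constphie}): the constant function $1$ is harmonic in $\Omega_R$, satisfies $\frac{\partial u}{\partial n}=0=B_1(u)$ on $\partial\Omega$ and equals $1$ on $\{\abs{x}=R\}$, so by uniqueness $\phi^1_R\equiv 1$ for every $R$, whence $\Phi^1=\lim_{R\to\infty}\phi^1_R\equiv 1$ in every dimension.

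For $N\ge 3$ the upper bound $\Phi^\theta\le 1$ is immediate from Lemma \ref{lemma:defparabolic} together with Proposition \ref{prop:profilesareequal}. The lower bound follows from monotonicity in $\theta$: since $0\le\theta$, Proposition \ref{prop:compasym} gives $\Phi^0\le\Phi^\theta$, while Proposition \ref{prop:dirasymppro}(ii) supplies $1-\tfrac{C}{\abs{x}^{N-2}}\le\Phi^0$, and combining these yields the claim. The main obstacle is assertion (ii), because here monotonicity in $\theta$ points the wrong way: it only gives $\Phi^\theta\ge\Phi^0=0$, so an independent \emph{upper} bound is required, and no domain comparison for Robin data is available. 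I would instead exploit the harmonicity and regularity of the profile. By Lemma \ref{lemma:defharmonicprof} and Proposition \ref{prop:profilesareequal}, $\Phi\defeq\Phi^\theta\in C^2(\adh{\Omega})$ is harmonic, bounded with $0\le\Phi\le 1$, and satisfies $B_\theta(\Phi)=0$. The key observation is that $\frac{\partial\Phi}{\partial n}\le 0$ on $\partial\Omega$ (with $n$ the outer normal of $\Omega$, pointing into the hole): on the Dirichlet part $\Phi=0$ is a boundary minimum of the nonnegative function $\Phi$; on the Robin part $\frac{\partial\Phi}{\partial n}=-\cot(\tfrac{\pi}{2}\theta)\Phi\le 0$; and on the Neumann part it vanishes.

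Fixing $\rho$ with $\hole\subset B(0,\rho)$, I would then study the flux $F\defeq\int_{\abs{x}=R}\frac{\partial\Phi}{\partial r}\,dS$. It is independent of $R\ge\rho$ by the divergence theorem on the annulus $\{\rho<\abs{x}<R\}\subset\Omega$, where $\Phi$ is harmonic; and applying the divergence theorem on $\{x\in\Omega:\abs{x}<\rho\}$ identifies $F=-\int_{\partial\Omega}\frac{\partial\Phi}{\partial n}\,dS\ge 0$. Writing $g(R)$ for the average of $\Phi$ over $\{\abs{x}=R\}$, harmonicity gives $g'(R)=\frac{F}{\omega_{N-1}}R^{1-N}$ with $\omega_{N-1}=|S^{N-1}|$. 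For $N\le 2$ the integral $\int_\rho^\infty R^{1-N}\,dR$ diverges, so $F>0$ would force $g(R)\to\infty$, contradicting $\Phi\le 1$; hence $F=0$. (For $N\ge 3$ this integral converges, which is exactly why the dichotomy occurs.)

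To conclude, since $\frac{\partial\Phi}{\partial n}$ is continuous and $\le 0$ on $\partial\Omega$ and integrates to $-F=0$, it must vanish identically on $\partial\Omega$. Suppose $\Phi\not\equiv 0$; by the strong minimum principle $\Phi>0$ in $\Omega$. As $\theta\not\equiv 1$, the boundary has a Dirichlet or a Robin point; at such a point $\Phi=0$ (on the Robin part this follows from $\frac{\partial\Phi}{\partial n}=0$ together with $B_\theta(\Phi)=0$ and $\cos(\tfrac{\pi}{2}\theta)>0$), so it is a boundary minimum, and Hopf's lemma (using the $C^2$ interior ball condition) forces $\frac{\partial\Phi}{\partial n}<0$ there, contradicting $\frac{\partial\Phi}{\partial n}\equiv 0$. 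Therefore $\Phi\equiv 0$. I expect the delicate points to be the justification of the flux identities up to the merely $C^2$ boundary and the clean application of Hopf's lemma at Robin points, but both are underwritten by the regularity furnished in Lemma \ref{lemma:defharmonicprof}.
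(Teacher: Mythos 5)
Your proofs of part (i) and of the Neumann statement coincide with the paper's: part (i) is exactly Proposition \ref{prop:compasym} combined with Proposition \ref{prop:dirasymppro}, and $\Phi^{1}\equiv 1$ follows because $\phi^{1}_{R}\equiv 1$ solves (\ref{eqn:constphie}) for every $R$. For part (ii) — the substantive case — your argument is correct but genuinely different from the paper's. The paper sets $M=\max_{\partial\Omega}\Phi^{\theta}$ and compares $u=\Phi^{\theta}-M$ with the Dirichlet approximations $\phi^{0}_{R}$ via Theorem \ref{thm:neugeqdirell}; since $\Phi^{0}\equiv 0$ when $N\leq 2$ (Proposition \ref{prop:dirasymppro}, which itself rests on the explicit radial profiles of Lemma \ref{lemma:ellball} and the Dirichlet domain monotonicity of Proposition \ref{prop:compellprof}), the maximum of $\Phi^{\theta}$ is attained at some $x_{0}\in\partial\Omega$, and Hopf's lemma at this \emph{maximum} point, played against the boundary condition there, forces $M=0$. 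You instead make the dimension dependence enter through a flux identity: $\frac{\partial\Phi^{\theta}}{\partial n}\leq 0$ on all of $\partial\Omega$ for every type of boundary condition, the flux $F$ through large spheres equals $-\int_{\partial\Omega}\frac{\partial\Phi^{\theta}}{\partial n}\,dS\geq 0$ and is $R$-independent, and the spherical-mean formula $g'(R)=F\,\omega_{N-1}^{-1}R^{1-N}$ together with $0\leq\Phi^{\theta}\leq 1$ and the divergence of $\int^{\infty}R^{1-N}\,dR$ for $N\leq 2$ forces $F=0$; hence $\frac{\partial\Phi^{\theta}}{\partial n}\equiv 0$, and Hopf's lemma at a boundary \emph{minimum} (a Dirichlet or Robin point, which exists since $\theta\not\equiv 1$ and at which $\Phi^{\theta}$ necessarily vanishes) gives the contradiction unless $\Phi^{\theta}\equiv 0$. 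Your route is more self-contained — for (ii) it bypasses Propositions \ref{prop:dirasymppro} and \ref{prop:compellprof} and the radial computations altogether — and it exhibits the transience/recurrence dichotomy ($\int^{\infty}R^{1-N}\,dR<\infty$ exactly when $N\geq 3$) as the visible mechanism; it also handles uniformly the case of a maximum lying on the Neumann part of the boundary, which the paper's published argument passes over. The paper's route buys economy by reusing comparison machinery already in place. Both arguments draw on the same regularity input, Lemma \ref{lemma:defharmonicprof} with Proposition \ref{prop:profilesareequal}: $\Phi^{\theta}\in C^{2}(\adh{\Omega})$ is harmonic and satisfies $B_{\theta}(\Phi^{\theta})=0$ pointwise, which is what legitimizes your divergence theorem up to $\partial\Omega$ and the interior-ball hypothesis of Hopf's lemma.
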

\begin{proof}
For (i)  it is enough to use Proposition 
\ref{prop:dirasymppro} jointly with Proposition \ref{prop:compasym}. 

For (ii)  take $M=\max_{\partial\Omega} \Phi^\theta$
and  $u\defeq \Phi^\theta-M$. Then, for $R>0$ large enough, $u$
satisfies 
\begin{displaymath}
\left\{
\begin{aligned}
 -\Lap  & u = 0 \qquad in \ \Omega_R \\ 
& u\leq 0 \qquad  in \ \partial \Omega \\
& u\leq 1 \qquad  in \ \partial B(0,R) . 
\end{aligned}
\right. 
\end{displaymath}
Therefore, $u$ is a subsolution of (\ref{eqn:constphie}) for
$\theta=0$, that is, for the Dirichlet problem. Therefore, using
Theorem \ref{thm:neugeqdirell}, we obtain that: 
\begin{displaymath}
u(x)\leq \phi^0_{R}(x) \qquad \forall x\in \Omega_R
\end{displaymath}
and since  $R>0$ is arbitrary, we get 
\begin{displaymath}
u(x)\leq \Phi^0(x) \qquad x\in\Omega.
\end{displaymath}
Since  $N\leq 2$, from  Proposition 
\ref{prop:dirasymppro}, $\Phi^0\equiv 0$ and  therefore $0\leq
\Phi^\theta\leq M$ in $\Omega$.

But then, at a point  $x_0\in \partial\Omega$ of maximum, that is,  such that $\Phi^\theta(x_0)=M$, 
by Hopf Lemma we have  $\frac{\partial \Phi_e^\theta}{\partial
  n}(x_0)> 0$ or $\Phi^\theta$ is constant. In the latter case, since
$\theta \not \equiv 1$, we have  $\Phi^\theta\equiv 0$. In the former case,
if  $x_{0}\in \partial^{D} \Omega$ then $M=0$ and
$\Phi^\theta\equiv 0$. If $x_{0} \in \partial^{R} \Omega$  
\begin{displaymath}
0=B_\theta(\Phi^\theta)(x_0)=\sin(\frac{\pi}{2}\theta(x_0))\frac{\partial \Phi^\theta}{\partial
  n}(x_0)+\cos(\frac{\pi}{2}\theta(x_0))M \geq
\cos(\frac{\pi}{2}\theta(x_0))M 
\end{displaymath}
and then $M=0$ and thus  $\Phi^\theta \equiv 0$. 
\begin{reserva}
  Now, if $\cos(\frac{\pi}{2}\theta(x_0))> 0$, this implies $M=0$ and
  thus $\Phi^\theta \equiv 0$. In the case
  $\cos(\frac{\pi}{2}\theta(x_0))=0$, we have
  $\frac{\partial \Phi^\theta}{\partial n}(x_0)=0$. Therefore, using
  Hopf lemma, we obtain that $\Phi^\theta\equiv M$. As
  $\theta \not \equiv 1$, we have $M=0$ so $\Phi^\theta\equiv 0$.
\end{reserva}
\end{proof}

Then we have the following
result about the rate of mass loss. Observe that, except for Neumann
boundary conditions,  for $N\geq 3$ all
solutions lose mass at a uniform rate, while if  $N\leq 2$ there are
solutions for which the mass  decays as slow as we want. 

\begin{theorem}
  \label{thm:rate_loss_mass}
  
  Assume $\Omega$ is an exterior domain and  $0\leq \theta \leq
  1$,  not necessarily constant, with $\theta\not\equiv 1$, that is, except for Neumann boundary
  conditions. For every  $0\leq u_{0} \in L^{1}(\Omega)$ denote 
\begin{displaymath}
m^{\theta}_{u_0}(t) \defeq \int_\Omega S^\theta(t)u_0(x)dx , \quad t>0 
\end{displaymath}
and the asymptotic mass  $m^{\theta}_{u_0}\defeq
\lim_{t\to\infty} m^{\theta}_{u_0}(t)$. Then 

  \begin{enumerate}
  \item
If    $N\geq 3$, there exists a constant $c_{\theta}$ such that 
\begin{displaymath}
\big|  m^{\theta}_{u_0}(t)  -m^{\theta}_{u_0}\big| \leq c_{\theta}
\frac{\|u_{0}\|_{L^{1}(\Omega)}}{t^{\frac{N-2}{2}}}. 
\end{displaymath}

\item 
If  $N\leq 2$,  let  $g:[0,\infty) \to (0,1]$ a monotonically decreasing continuous
function such that $\lim_{t\to\infty}g(t)=0$.

Then, there exist an initial
value $0\leq u_0\in L^1(\Omega)$ with $\norm{u_0}_{L^1(\Omega)}=1$ and $T>0$ such that
\begin{equation}
\label{eqn:opteq1bis}
m^{\theta}_{u_0}(t) \geq g(t)  \qquad \forall t\geq T.
\end{equation}
\end{enumerate}
\end{theorem}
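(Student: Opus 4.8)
For part (i) the idea is to relate the time-dependent mass $m^\theta_{u_0}(t)$ and the asymptotic mass $m^\theta_{u_0}$ through the profile. Using the duality \eqref{eqn:Sexchange} one writes $m^\theta_{u_0}(t)=\int_\Omega u_0\, S^\theta(t)1_\Omega$, while Proposition \ref{prop:parabolicremaining} gives $m^\theta_{u_0}=\int_\Omega u_0\,\Phi^\theta$. Since $S^\theta(t)\Phi^\theta=\Phi^\theta$ by Proposition \ref{prop:profilesareequal}, setting $w_0:=1_\Omega-\Phi^\theta\ge 0$ we obtain $S^\theta(t)1_\Omega-\Phi^\theta=S^\theta(t)w_0$ and hence $\abs{m^\theta_{u_0}(t)-m^\theta_{u_0}}\le \|u_0\|_{L^1(\Omega)}\,\|S^\theta(t)w_0\|_{L^\infty(\Omega)}$. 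Thus everything reduces to showing $\|S^\theta(t)w_0\|_{L^\infty(\Omega)}\le C\,t^{-(N-2)/2}$.

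To get this decay I would use that, by Theorem \ref{thm:compperf}(i), $0\le w_0(y)\le C\abs{y}^{2-N}$ on $\Omega$, together with the Gaussian upper bound \eqref{eqn:gyryabound}. Splitting $w_0$ into a compactly supported bounded piece (which lies in $L^1$ and contributes $O(t^{-N/2})$ by \eqref{eqn:ackrn}, negligible for large $t$) and the far tail bounded by $C\abs{y}^{2-N}$, the main term is controlled by $t^{-N/2}\int_{\R^N}e^{-\abs{x-y}^2/(4ct)}\abs{y}^{2-N}\,dy$. The rescaling $y=\sqrt{t}\,\eta$ turns this into $t^{-(N-2)/2}\int_{\R^N}e^{-\abs{\xi-\eta}^2/(4c)}\abs{\eta}^{2-N}\,d\eta$ with $\xi=x/\sqrt t$, and the last integral is bounded uniformly in $\xi$ because $\abs{\eta}^{2-N}$ is locally integrable for $N\ge 3$ and the Gaussian controls the tail. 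This yields the claimed rate; for bounded $t$ the bound is trivial since $\|S^\theta(t)w_0\|_{L^\infty(\Omega)}\le 1$ while $t^{-(N-2)/2}$ stays bounded below.

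For part (ii) the guiding principle is that mass placed far from the hole is lost very slowly, so by spreading the initial mass over arbitrarily distant regions the decay of $m^\theta_{u_0}(t)$ can be made slower than any prescribed $g$. The key ingredient I would establish first is a short-time lower bound: there is $c_0>0$ with $S^\theta(t)1_\Omega(x)\ge \tfrac12$ whenever $\abs{x}=R$ is large and $0<t\le c_0 R^2$. By Theorem \ref{thm:neugeqdir2} one has $S^\theta(t)1_\Omega\ge S^0_\Omega(t)1_\Omega$, and for a ball $B(x,R/2)\subset\Omega$ (which avoids the hole once $R$ is large) the parabolic comparison Theorem \ref{thm:neugeqdir}, with the restriction of $S^0_\Omega(t)1_\Omega$ to $B(x,R/2)$ playing the role of a supersolution with nonnegative lateral data, gives $S^0_\Omega(t)1_\Omega\ge S^0_{B(x,R/2)}(t)1$ on $B(x,R/2)$. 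The standard short-time lower bound for the Dirichlet semigroup on a ball (via one-dimensional error-function barriers, or the exit-time Gaussian estimate) then gives $S^0_{B(x,R/2)}(t)1(x)\ge \tfrac12$ for $t\le c_0R^2$.

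With this in hand I would construct $u_0=\sum_k \alpha_k\psi_k$, where $\psi_k\ge 0$ is a normalised bump supported near the sphere $\abs{x}=R_k$ with $R_k\uparrow\infty$ and pairwise disjoint supports, and $\alpha_k\ge 0$ with $\sum_k\alpha_k=1$, so that $\|u_0\|_{L^1(\Omega)}=1$. By linearity and the short-time estimate, $m^\theta_{u_0}(t)=\sum_k\alpha_k\, m^\theta_{\psi_k}(t)\ge \tfrac12\sum_{k:\,R_k\ge\sqrt{t/c_0}}\alpha_k$. Choosing the tails $\beta_k:=\sum_{j\ge k}\alpha_j$ to satisfy $\beta_k\ge 2g(c_0R_{k-1}^2)$, which is possible because $g$ is decreasing with $g\to 0$ (so the prescribed tails decrease to $0$ and still sum to $1$), forces $m^\theta_{u_0}(t)\ge g(t)$ for all $t\ge T:=c_0R_1^2$. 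The main obstacle is precisely the short-time lower bound of the previous paragraph, namely quantifying that ``the heat has not yet reached the hole''; once that uniform estimate is secured, the series construction is routine bookkeeping.
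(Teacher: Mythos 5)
Your proof of part (i) is correct and is essentially the paper's own argument: the same reduction $m^\theta_{u_0}(t)-m^\theta_{u_0}=\int_\Omega u_0\,S^\theta(t)\big(1_\Omega-\Phi^\theta\big)$ via \eqref{eqn:Sexchange} and $S^\theta(t)\Phi^\theta=\Phi^\theta$, the same bound $0\le 1_\Omega-\Phi^\theta\le C|x|^{2-N}$ from Theorem \ref{thm:compperf}, and the same use of the Gaussian bound \eqref{eqn:gyryabound} with the parabolic rescaling, the resulting convolution integral being bounded uniformly by splitting near and far from the singularity of $|\cdot|^{2-N}$.

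Part (ii) is also correct, but you implement it by a genuinely different mechanism. Both arguments rest on the principle that mass placed at distance $R$ from the hole is retained for times of order $R^2$, and both spread unit mass as $\sum_k\alpha_k\psi_k$ over regions escaping to infinity. The paper makes the retention quantitative by taking as bumps the rescaled first Dirichlet eigenfunctions on large balls $B(x_n,R_n)\subset\Omega$, so that $S^0_{B(x_n,R_n)}(t)$ acts on them exactly as $e^{-\lambda t/R_n^2}$, giving a factor $\ge 1/2$ at prescribed times $t_n$ after the comparisons $S^\theta_\Omega\ge S^0_\Omega\ge S^0_{B(x_n,R_n)}$ (Theorems \ref{thm:neugeqdir2} and \ref{thm:neugeqdir}); it then invokes monotonicity of $t\mapsto m^\theta_{u_0}(t)$ and of $g$ to pass from the times $t_n$ to all large $t$. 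You instead dualize, $m^\theta_{\psi_k}(t)=\int_\Omega\psi_k\,S^\theta(t)1_\Omega$, so that all you need is a pointwise short-time lower bound $S^\theta(t)1_\Omega\ge \tfrac12$ at distance $R_k$ from the hole for $t\le c_0R_k^2$; this lets the bumps be arbitrary, yields the estimate for every $t\ge T$ directly (no monotonicity of the mass is used), and cleanly separates the analytic input from the bookkeeping. The one ingredient you leave as ``standard'' --- $S^0_{B(x,R/2)}(t)1(x)\ge\tfrac12$ for $t\le c_0R^2$ --- is indeed classical, and it can even be closed with the paper's own device: since $1\ge\psi/\|\psi\|_{L^\infty}$ on the unit ball, order preservation and scaling give $S^0_{B(x,\rho)}(t)1(x)\ge e^{-\lambda t/\rho^2}\,\psi(0)/\|\psi\|_{L^\infty}\ge \psi(0)/\big(2\|\psi\|_{L^\infty}\big)$ for $t\le \rho^2\ln 2/\lambda$; your argument only requires some positive constant there rather than $\tfrac12$, after adjusting the tail condition on the $\alpha_k$. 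So both routes are sound: the paper's is fully explicit and self-contained, while yours isolates a reusable heat-retention lemma and slightly streamlines the final bookkeeping.
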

\begin{proof}
(i) As we  have $m^{\theta}_{u_0} = \int_{\Omega} u_{0} \Phi^{\theta}$ then
  \begin{displaymath}
     m^{\theta}_{u_0}(t)  -m^{\theta}_{u_0} = \int_{\Omega} u_{0}
     \big( S^\theta(t)1_{\Omega} -\Phi^{\theta}\big) = \int_{\Omega} u_{0}
    S^\theta(t)  \big( 1_{\Omega} -\Phi^{\theta}\big) 
  \end{displaymath}
  since $ S^\theta(t) \Phi^{\theta} =    \Phi^{\theta}$.
  From Theorem \ref{thm:compperf}, since $N\geq 3$,
  $0\leq 1_{\Omega} (x)-\Phi^{\theta}(x) \leq \frac{C}{\abs{x}^{N-2}}$
  in $\Omega$ and then $1_{\Omega} -\Phi^{\theta}\in L^{p}(\Omega)$
  for any $p> \frac{N}{N-2}$. Then the Gaussian bounds
  (\ref{eqn:gyryabound})
  \begin{reserva2}
    and the standard estimate using Young's inequality give
    \begin{displaymath}
      \| S^\theta(t)  \big( 1_{\Omega} -\Phi^{\theta}\big)
      \|_{L^{\infty}(\Omega)} \leq \frac{c}{t^{\frac{N}{2p}}}
      \|1_{\Omega} -\Phi^{\theta} \|_{L^{p}(\Omega)} . 
    \end{displaymath}
    Thus we get the result with
    $\alpha= \frac{N}{2p} < \frac{N-2}{2}$.
  \end{reserva2}
imply 
    \begin{displaymath}
       S^\theta(t)  \big( 1_{\Omega} -\Phi^{\theta}\big) (x) \leq
   C    \int_{\Omega} \frac{e^{-\frac{\abs{x-y}^2}{4ct}}}{t^{N/2}}
   \frac{1}{\abs{y}^{N-2}}\, dy =
   \begin{Bmatrix}
     z= \frac{x-y}{\sqrt{t}} \\
     y= x-\sqrt{t}z
   \end{Bmatrix}
   \leq C    \int_{\R^{N}} e^{-\frac{\abs{z}^2}{4c}} 
   \frac{1}{\abs{x-\sqrt{t}z}^{N-2}}\, dz 
    \end{displaymath}
    \begin{displaymath}
      = \frac{C}{t^{\frac{N-2}{2}}}   \int_{\R^{N}} e^{-\frac{\abs{z}^2}{4c}} 
   \frac{1}{\abs{\frac{x}{\sqrt{t}}-z}^{N-2}}\, dz  =
   \frac{C}{t^{\frac{N-2}{2}}}  F(\frac{x}{\sqrt{t}})
    \end{displaymath}
    with
    \begin{displaymath}
      F(w) = \int_{\R^{N}} e^{-\frac{\abs{z}^2}{4c}} 
   \frac{1}{\abs{w-z}^{N-2}}\, dz   = \int_{|w-z|\leq a} e^{-\frac{\abs{z}^2}{4c}} 
   \frac{1}{\abs{w-z}^{N-2}}\, dz  + \int_{|w-z|\geq a} e^{-\frac{\abs{z}^2}{4c}} 
   \frac{1}{\abs{w-z}^{N-2}}\, dz  
    \end{displaymath}
    for any $a>0$. Now
    \begin{displaymath}
      F_{1}(w) = \int_{|w-z|\geq a} e^{-\frac{\abs{z}^2}{4c}} 
   \frac{1}{\abs{w-z}^{N-2}}\, dz  \leq
   \frac{1}{a^{N-2}}\int_{|w-z|\geq a} e^{-\frac{\abs{z}^2}{4c}} 
  \, dz \leq   
   \frac{1}{a^{N-2}}\int_{\R^{N}} e^{-\frac{\abs{z}^2}{4c}}  = C 
    \end{displaymath}
    and
    \begin{displaymath}
       F_{2}(w) = \int_{|w-z|\leq a} e^{-\frac{\abs{z}^2}{4c}} 
   \frac{1}{\abs{w-z}^{N-2}}\, dz  = \int_{\R^{N}} e^{-\frac{\abs{z}^2}{4c}} 
   \frac{1}{\abs{w-z}^{N-2}}\chi_{B(w,a)} \, dz . 
    \end{displaymath}
The function $g(z)=  \frac{1}{\abs{w-z}^{N-2}}\chi_{B(w,a)} $ is in
$L^{q}(\R^{N})$ for $q<\frac{N}{N-2}$ with a norm independent of $w$ so Hölder's inequality
implies
\begin{displaymath}
   F_{2}(w) \leq C. 
\end{displaymath}
Hence $F\in L^{\infty}(\R^{N})$ and therefore 
 \begin{displaymath}
    \| S^\theta(t)  \big( 1_{\Omega} -\Phi^{\theta}\big)
    \|_{L^{\infty}(\Omega)} \leq  \frac{c}{t^{\frac{N-2}{2}}}  
  \end{displaymath}
and we get the result. 
\begin{reserva}
	Una demostración más corta y creo que cierta. Queremos estudiar:
	\begin{equation}
		\int_\RN \frac{e^{-\frac{\abs{x-y}^2}{4ct}}}{t^{N/2}}\frac{1}{\abs{y}^{N-2}}
	\end{equation}
Entonces tomemos $f(y)=e^{-\frac{\abs{x-y}^2}{4ct}}$ y $g(y)=\abs{y}^{2-N}$. Consideramos sus reordenadas decrecientes que son $f^*(y)=e^{-\frac{\abs{y}^2}{4ct}}$ y $g^*(y)=\abs{y}^{2-N}$ (deberían ser esas, pero igual no, me estoy tirando un triple). La desigualdad de Hardy-Littlewood nos dice:
		\begin{equation}
			\int_\RN f(y)g(y)\leq \int_\RN f^*(y)g^*(y)
		\end{equation}
	Entonces
	\begin{equation}
		\frac{1}{t^{N/2}}\int_\RN \frac{e^{-\frac{\abs{x-y}^2}{4ct}}}{\abs{y}^{N-2}} \leq \frac{1}{t^{N/2}}\int_\RN \frac{e^{-\frac{\abs{y}^2}{4ct}}}{\abs{y}^{N-2}} =\frac{C}{t^{N/2}}\int_0^\infty e^{-\frac{r^2}{4ct}}r=\frac{Ct}{t^{N/2}}=\frac{C}{t^{\frac{N-2}{2}}}
	\end{equation}
\end{reserva}
  
  \noindent (ii)
  Observe that we have $\Phi^{\theta}=0$ and then   $m^{\theta}_{u_0}
  = 0$, for any initial data in $L^{1}(\Omega)$. Now  consider
  $\lambda>0$ the first Dirichlet eigenvalue for the 
Laplacian operator in the unit ball, $B$,  and its associated positive
eigenfunction $\psi\geq 0$ (normalized such that
$\norm{\psi}_{L^1(B)}=1$). 
	
Now we choose $t_n\to \infty$ such that
$g(t_n)=\frac{1}{2^{n+2}}$. Then we consider the following initial
datum made up by rescaled copies of $\psi$ in disjoint balls with
large radius and far away centres: 
\begin{displaymath}
	u_0(x)=\sum_{n=1}^\infty\frac{1}{2^n R_n^{N}} \psi\left(\frac{x-x_n}{R_n}\right)\chi_{B(x_n,R_n)}(x),
\end{displaymath}
where $\chi_{B(x_n,R_n)}$ is the characteristic function of the $B(x_n,R_n)$ and $R_n$ and $x_n$ are chosen so that
\begin{enumerate} [label={(\arabic*)}]
\item $e^{-\frac{\lambda}{R_n^2}t_n}\geq 1/2$ (This is  possible
  taking $R_n$ large enough)
  
\item $B(x_n,R_n)\subset \Omega$ (This is possible  taking $\abs{x_n}$ large enough)
\end{enumerate}
Therefore, $\norm{u_0}_{L^1(\Omega)}=1$ and
\begin{equation}
\label{eqn:opteq1}
\int_\Omega S^\theta (t_n)u_0(x)dx  \mygeq{Thm
  \ref{thm:neugeqdir2}} \int_\Omega S^0(t_n)u_0 (x)dx\mygeq{}
\frac{1}{2^n R_n^{N}}   \int_\Omega S^0(t_n)\left(
  \psi\left(\frac{\cdot-x_n}{R_n}\right)\chi_{B(x_n,R_n)}\right)(x)dx . 
\end{equation}
Now observe that for  $0\leq \varphi$ in $\Omega$, as $B(x_n,R_n)\subset
\Omega$, we have $S^0(t)\varphi\geq 0$ in $\partial B(x_n,R_n)$ and
then Theorem \ref{thm:neugeqdir} implies 
$S^0(t)\varphi \geq S^0_{B(x_n,R_n)}\varphi$, that is, the heat
semigroup in $B(x_n,R_n)$ with Dirichlet boundary conditions.
Therefore, 
\begin{equation} \label{eqn:opteq2}
\begin{aligned} 
 \frac{1}{2^n R_n^{N}} &\int_\Omega S^0(t_n) \left(\psi\left(\frac{\cdot-x_n}{R_n}\right)\chi_{B(x_n,R_n)}\right)(x)dx
 \geq  \frac{1}{2^n R_n^{N}} \int_{B(x_n,R_n)}
 S^0_{B(x_n,R_n)}(t_n)\psi\left(\frac{\cdot-x_n}{R_n}\right) (x)dx \\
& = \frac{e^{-\frac{\lambda}{R_n^2} t_n}}{2^{n}R_n^N}
\int_{B(0,R_n)} \psi(x) dx\mygeq{(1)} \frac{1}{2^{n+1}}. 
\end{aligned}	
\end{equation}	
Combining (\ref{eqn:opteq1}) and (\ref{eqn:opteq2}) we obtain 
\begin{equation}
	\label{eqn:opteq3}
	m_{u_0}^\theta(t_n)= \int_\Omega S^\theta (t_n)u_0(x)dx \geq
        \frac{1}{2^{n+1}} .
\end{equation}
Now, take $T=t_1$ and $t\geq T$. Then, there exists $n\in\mathbb{N}$ such that $t\in [t_n,t_{n+1})$. As $g$ and the mass of the solutions are monotonically decreasing we obtain
\begin{displaymath}
	m^\theta_{u_0}(t)\geq m^\theta_{u_0}(t_{n+1}) \mygeq{(\ref{eqn:opteq3})}\frac{1}{2^{n+2}}=g(t_{n})\geq
	g(t)
\end{displaymath}
 which is (\ref{eqn:opteq1bis}).
\end{proof}

\appendix

\begin{appendices}
\section{Schauder estimates}
\label{sec:schauder-estimates}

Here we present some elliptic Schauder estimates, which allow us to
estimate the derivatives of a solution of the Laplace equation just
with the $L^\infty$ norm of the solutions. These are classical results
which can be found, for example, in \cite{gilbarg2015elliptic} Theorem
4.6:
\begin{theorem}
  \label{thm:4.6}
  Let $\Omega\subset\RN$ and $u\in C^2(\Omega)$ such that
  $$\Lap u(x) =0 \ \ \ \forall x\in\Omega.$$ Then, for $x_0\in \Omega$
  and any two concentric balls $B_1\defeq B(x_0,R)$ and
  $B_{2}\defeq B(x_0, 2R)\subset \subset \Omega$, we have
  \begin{displaymath}
    \label{eqn:thm4.6}
    R\abs{Du}_{B_1}+R^2\abs{D^2u}_{B_1}\leq C\norm{u}_{L^\infty(B_2)},
  \end{displaymath}
  where we denote $\abs{Du}_{B_1}=\max_{i}\norm{D_i
    u}_{L^\infty(B_1)}$, $\abs{D^2u}_{B_1}=\max_{i,j}\norm{D_{ij}
    u}_{L^\infty(B_1)}$ and $C$ is a constant independent on $u$,
  $x_0$ and $R$. 
\end{theorem}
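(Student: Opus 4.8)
The plan is to exploit two classical facts about harmonic functions: every partial derivative of a harmonic function is again harmonic, and a harmonic function equals its average over any concentric ball. First I would recall that, since $u$ is harmonic, it is in fact $C^\infty(\Omega)$ (indeed real-analytic), so the derivatives $D_iu$ and $D_{ij}u$ exist, are continuous, and are themselves harmonic in $\Omega$; in particular the mean value property applies to each of them.

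The core ingredient is a gradient bound at the \emph{centre} of a ball. For a harmonic $v$ on $\adh{B(y,\rho)}\subset\Omega$, I would combine the mean value property of the harmonic function $D_iv$, namely $D_iv(y)=\fint_{B(y,\rho)}D_iv$, with the divergence theorem $\int_{B(y,\rho)}D_iv=\int_{\partial B(y,\rho)}v\,\nu_i\,dS$, to obtain
\[
D_iv(y)=\frac{1}{\abs{B(y,\rho)}}\int_{\partial B(y,\rho)}v\,\nu_i\,dS .
\]
Bounding the integrand by $\abs{v}$ and using $\abs{\partial B(y,\rho)}/\abs{B(y,\rho)}=N/\rho$ then yields
\[
\abs{D_iv(y)}\leq\frac{N}{\rho}\norm{v}_{L^\infty(\partial B(y,\rho))}\leq\frac{N}{\rho}\norm{v}_{L^\infty(B(y,\rho))}.
\]

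To control the derivatives on all of $B_1=B(x_0,R)$ I would recentre the estimate. For any $y\in B_1$ the ball $B(y,R)$ lies inside $B_2=B(x_0,2R)$, so applying the bound with $v=u$ and $\rho=R$ gives $R\abs{D_iu(y)}\leq N\norm{u}_{L^\infty(B_2)}$, hence $R\abs{Du}_{B_1}\leq N\norm{u}_{L^\infty(B_2)}$. For the second derivatives I would iterate with the intermediate radius $R/2$: fixing $y\in B_1$, apply the gradient bound to the harmonic function $D_ju$ on $B(y,R/2)$ (which sits inside $B(x_0,3R/2)\subset B_2$) to get $\abs{D_{ij}u(y)}\leq\frac{2N}{R}\norm{D_ju}_{L^\infty(B(y,R/2))}$; then bound the right-hand norm by applying the gradient bound once more, since for every $z\in B(y,R/2)$ the ball $B(z,R/2)$ again lies in $B_2$, giving $\abs{D_ju(z)}\leq\frac{2N}{R}\norm{u}_{L^\infty(B_2)}$. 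Chaining the two steps produces $R^2\abs{D_{ij}u(y)}\leq 4N^2\norm{u}_{L^\infty(B_2)}$, and therefore $R^2\abs{D^2u}_{B_1}\leq 4N^2\norm{u}_{L^\infty(B_2)}$.

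Adding the two bounds gives the claim with $C=N+4N^2$, depending only on the dimension $N$ and not on $u$, $x_0$ or $R$. Once the centre gradient bound is in place the argument is pure bookkeeping, so I expect no genuine obstacle; the only point deserving care is the nested-ball geometry of the iteration, namely checking that each auxiliary ball $B(z,R/2)\subset B(y,R/2)$ used for the second derivatives remains inside $B_2$, so that $\norm{u}_{L^\infty(B_2)}$ controls all the averages, while the factor $\rho=R$ versus $\rho=R/2$ is what generates the correct powers of $R$ in front of the first and second derivatives.
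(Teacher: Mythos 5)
Your proof is correct, but it takes a genuinely different route from the paper: the paper gives no proof of this statement at all, presenting it as a classical result and citing \cite{gilbarg2015elliptic} (Theorem 4.6 there), whose proof runs through the potential-theoretic Schauder machinery and actually yields full interior $C^{2+\alpha}$ estimates, of which the stated inequality is only the sup-norm part. Your argument is instead the classical elementary derivation of interior derivative estimates for harmonic functions (cf. Chapter 2 of \cite{gilbarg2015elliptic}): smoothness of harmonic functions, harmonicity of their derivatives, the mean value property combined with the divergence theorem to obtain the centre bound $\abs{D_i v(y)}\leq \frac{N}{\rho}\norm{v}_{L^\infty(B(y,\rho))}$, and then the two-scale nesting with radii $R$ and $R/2$. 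Your nested-ball geometry is checked correctly (for $y\in B_1$ one has $\overline{B(y,R)}\subset\overline{B_2}\subset\Omega$, and $B(z,R/2)\subset B_2$ whenever $z\in B(y,R/2)$), and the bookkeeping delivers the explicit constant $C=N+4N^2$, which depends only on the dimension, as required. What your route buys is a short, self-contained proof with an explicit dimensional constant, entirely adequate for how the paper uses this theorem (uniform continuity of bounded harmonic functions in Lemma \ref{lemma:defharmonicprof}); what the paper's citation buys is uniformity of presentation with the boundary Schauder estimate (Theorem \ref{thm:ellsch}) quoted immediately after, which genuinely requires the heavier theory, since no mean-value argument is available up to the boundary.
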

In the case of homogeneous boundary conditions we also have Schauder
estimates. The following result can be found in
\cite{gilbarg2015elliptic} Theorem 6.30 for Robin and Neumann boundary
conditions, and in \cite{gilbarg2015elliptic} Theorem 6.6 for
Dirichlet boundary conditions.
\begin{theorem}
  \label{thm:ellsch}
  Let $\Omega\subset \RN$ be a bounded regular domain. Let
  $u\in C^{2+\alpha}(\Omega)$ be a function such that:
  \begin{displaymath}
    \left\{
      \begin{aligned}
	-\Lap u &= f \qquad && \Omega \\
	B_\theta(u)&=\psi \qquad && \partial \Omega . 
      \end{aligned}
    \right. 
  \end{displaymath}
  with $\theta\in C^{1+\alpha}(\partial \Omega)$. Then,
  \begin{displaymath}
    \norm{u}_{C^{2+\alpha}(\adh{\Omega})}\leq C
    (\norm{u}_{C^0(\adh{\Omega})}+\norm{f}_{C^{\alpha}(\adh{\Omega})}+\norm{\psi}_{C^{1+\alpha}(\partial\Omega)}) 
  \end{displaymath}
where $\norm{\psi}_{C^{1+\alpha}(\partial\Omega)}=\inf\{\norm{\varphi}_{C^{1+\alpha}(\adh{\Omega})} : \varphi\in C^{1+\alpha}(\adh{\Omega}), \ \varphi\equiv \psi \ on \ \partial \Omega\}$.
\end{theorem}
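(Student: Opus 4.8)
The plan is to establish this as a classical a priori Schauder estimate by \emph{localization}, reducing the global bound on $\adh{\Omega}$ to a finite family of local estimates of three types: a pure interior estimate, a boundary estimate on the Dirichlet components of $\partial\Omega$, and a boundary estimate on the Robin/Neumann components. The structural feature that makes this clean is the standing hypothesis on $\theta$: on each connected component of $\partial\Omega$ the boundary condition is of a single type (either $\theta\equiv 0$, Dirichlet, or $0<\theta\leq 1$, Robin/Neumann), so $\partial^D\Omega$ and $\partial^R\Omega\cup\partial^N\Omega$ are disjoint unions of components lying at positive distance from one another. Hence no local patch ever sees two different boundary-condition types and the one-sided boundary Schauder estimates apply on each patch verbatim.

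First I would fix a finite cover of the compact set $\adh{\Omega}$ by balls of three kinds --- interior balls $B\subset\subset\Omega$, balls centred on $\partial^D\Omega$, and balls centred on $\partial^R\Omega\cup\partial^N\Omega$ --- together with a subordinate smooth partition of unity $\{\eta_m\}$ with $\sum_m\eta_m\equiv 1$ on $\adh{\Omega}$. On interior patches I would invoke the interior Schauder estimate (\cite{gilbarg2015elliptic} Theorem 6.2, of which Theorem \ref{thm:4.6} is the harmonic special case), which controls $\norm{u}_{C^{2+\alpha}}$ on the smaller ball by $\norm{u}_{C^0}$ and $\norm{f}_{C^\alpha}$ on the larger one. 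On Dirichlet patches the condition reads $u=\psi$ and I would apply \cite{gilbarg2015elliptic} Theorem 6.6. On Robin/Neumann patches I first rewrite the condition: since each such component is compact and $\theta$ is continuous and strictly positive there, $\sin(\frac{\pi}{2}\theta)$ is bounded below by a positive constant, so dividing $B_\theta(u)=\psi$ by $\sin(\frac{\pi}{2}\theta)$ produces the oblique condition $\frac{\partial u}{\partial n}+\cot(\frac{\pi}{2}\theta)\,u=\psi/\sin(\frac{\pi}{2}\theta)$, whose coefficient $\cot(\frac{\pi}{2}\theta)\geq 0$ is bounded and lies in $C^{1+\alpha}$ (because $\theta\in C^{1+\alpha}(\partial\Omega)$ stays away from $0$ on the component and $\cot$ is smooth on the resulting range); the Robin/Neumann boundary estimate \cite{gilbarg2015elliptic} Theorem 6.30 then applies.

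Next I would localize the equation. For each cutoff, $w_m\defeq\eta_m u$ solves $-\Lap w_m=\eta_m f-2\nabla\eta_m\cdot\nabla u-u\,\Lap\eta_m$ with the corresponding boundary datum (for Dirichlet, $w_m=\eta_m\psi$; for Robin, $B_\theta(w_m)=\eta_m\psi+\sin(\frac{\pi}{2}\theta)\,u\,\frac{\partial\eta_m}{\partial n}$), so each local estimate bounds $\norm{w_m}_{C^{2+\alpha}}$ in terms of $\norm{f}_{C^\alpha}$, the boundary data, and a remainder involving $\norm{u}_{C^{1+\alpha}(\adh{\Omega})}$ arising from the commutator terms $\nabla\eta_m\cdot\nabla u$ and $u\,\Lap\eta_m$. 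Summing over the finitely many $m$ gives $\norm{u}_{C^{2+\alpha}(\adh{\Omega})}\leq C(\norm{u}_{C^0(\adh{\Omega})}+\norm{f}_{C^\alpha(\adh{\Omega})}+\norm{\psi}_{C^{1+\alpha}(\partial\Omega)})+C\norm{u}_{C^{1+\alpha}(\adh{\Omega})}$. In the application of the theorem in Lemma \ref{lemma:defharmonicprof} one has $\psi\equiv 0$, so all boundary-data terms vanish and the precise regularity demanded of $\psi$ on the two parts is immaterial.

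The main obstacle, and the final step, is absorbing that first-order remainder. I would use the interpolation inequality for Hölder norms on a regular bounded domain, $\norm{u}_{C^{1+\alpha}(\adh{\Omega})}\leq \varepsilon\norm{u}_{C^{2+\alpha}(\adh{\Omega})}+C_\varepsilon\norm{u}_{C^0(\adh{\Omega})}$, choosing $\varepsilon$ small enough that $C\varepsilon\norm{u}_{C^{2+\alpha}(\adh{\Omega})}$ can be moved to the left-hand side; the residual $C_\varepsilon\norm{u}_{C^0(\adh{\Omega})}$ is then absorbed into the first term on the right, yielding exactly the claimed bound. The delicate points to verify are that the partition-of-unity commutators are genuinely lower order (so that interpolation applies), and that the Robin coefficient and data remain uniformly in $C^{1+\alpha}$ across each component --- both of which rest on the separation of boundary types and on the compactness and positivity of $\theta$ guaranteed by the standing hypotheses.
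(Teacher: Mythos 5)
Your proposal is correct, but it is worth noting that the paper does not actually prove this theorem: it is quoted as classical, with \cite{gilbarg2015elliptic} Theorem 6.6 cited for Dirichlet boundary conditions and Theorem 6.30 for Robin/Neumann ones. What you have written out is precisely the gluing argument the paper leaves implicit, namely how the two pure-type global estimates combine into the stated mixed-type estimate. Your key observation --- that the standing hypothesis on $\theta$ forces $\partial^D\Omega$ and $\partial^R\Omega\cup\partial^N\Omega$ to be unions of connected components of the compact boundary, hence at positive distance, so every patch of the partition of unity sees a single boundary-condition type --- is exactly what makes the paper's two-citation shortcut legitimate, and your treatment of the Robin patches (uniform lower bound on $\sin(\frac{\pi}{2}\theta)$ by compactness, so that $\cot(\frac{\pi}{2}\theta)$ is a bounded nonnegative $C^{1+\alpha}$ coefficient) is the right reduction to the oblique-derivative setting. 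The commutator computation for $w_m=\eta_m u$ and the absorption of the $C^{1+\alpha}$ remainder via the interpolation inequality (\cite{gilbarg2015elliptic} Lemma 6.35) are the standard implementation. Two minor glosses: in Gilbarg--Trudinger, Theorem 6.30 is the \emph{existence} theorem for the oblique-derivative problem, while the global a priori estimate you need on Robin/Neumann patches is Theorem 6.26 (the paper's own citation shares this imprecision); and the absorption step requires the left-hand side to be finite, so one should read the hypothesis as $u\in C^{2+\alpha}(\adh{\Omega})$ (or first run the argument on an exhausting family of subdomains), since with $u\in C^{2+\alpha}(\Omega)$ only, moving $C\varepsilon\norm{u}_{C^{2+\alpha}(\adh{\Omega})}$ to the left is not justified. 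With those glosses your argument is complete, and it is in fact slightly more informative than the paper's appeal to the literature, since no single theorem in \cite{gilbarg2015elliptic} covers different condition types on different boundary components.
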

\end{appendices}

\bibliographystyle{alpha}
\bibliography{extdom}
\end{document}